\newcommand\irregularcircle[2]{
  \pgfextra {\pgfmathsetmacro\len{(#1)+rand*(#2)}}
  +(0:\len pt)
  \foreach \a in {10,30,...,350}{
    \pgfextra {\pgfmathsetmacro\len{(#1)+rand*(#2)}}
    -- +(\a:\len pt)
  } -- cycle
}
\newtheorem{theorem}{Theorem}[section]
\newtheorem{lemma}[theorem]{Lemma}
\newtheorem{prop}[theorem]{Proposition}
\newtheorem{corollary}[theorem]{Corollary}
\theoremstyle{remark}
\newtheorem{definition}[theorem]{Definition}
\newtheorem{remark}[theorem]{Remark}
\newcommand{\newproof}[1]{{\color{black}#1}}
\begin{document}

\begin{frontmatter}
\title{Locality of random digraphs on expanders}
\runtitle{Locality of random digraphs on expanders}

\begin{aug}
\author[A]{\fnms{Yeganeh} \snm{Alimohammadi}\ead[label=e1]{yeganeh@stanford.edu}},
\author[B]{\fnms{Christian} \snm{Borgs}\ead[label=e2,mark]{borgs@berkeley.edu}}
\and
\author[A]{\fnms{Amin} \snm{Saberi}\ead[label=e3,mark]{saberi@stanford.edu}}
\address[A]{Management Science and Engineering,
Stanford University,
\printead{e1,e3}}

\address[B]{Electrical Engineering and Computer Sciences,
U.C. Berkeley,
\printead{e2}}
\end{aug}

\begin{abstract}
We study random digraphs on sequences of expanders with bounded average degree {which converge locally in probability}. We prove that the threshold for the existence of a giant strongly connected component, as well as the asymptotic fraction  of nodes with  giant fan-in or  nodes with giant fan-out are local, in the sense that they are the same for two sequences with the same  local limit.  The digraph has a bow-tie structure, with all but a vanishing fraction of nodes lying either in the unique strongly connected giant and its fan-in and fan-out, or in sets with small fan-in and small fan-out. All local quantities are expressed in terms of  percolation on the limiting rooted graph, without  any structural assumptions on the limit, allowing, in particular, for non tree-like graphs.

\newproof{In the course of establishing these results, we generalize previous results on the locality of the size of the giant to expanders of bounded average degree with possibly non-tree like limit.  We also show that regardless of local convergence of a sequence, uniqueness of the giant and convergence of its relative size for unoriented percolation imply the  bow-tie structure for directed percolation.}
 
%

An application of our methods  shows that the critical threshold for bond percolation and random digraphs on preferential attachment graphs is $p_c=0$, with an infinite order phase transition at $p_c$.
\end{abstract}

\begin{keyword}[class=MSC2020]
\kwd[Primary ]{60K37} 
\kwd{05C48} 
\kwd{05C80} 
\kwd{05C20} 
\kwd[; secondary ]{90B15} 
\end{keyword}

\begin{keyword}
\kwd{expanders}
\kwd{local convergence}
\kwd{random graphs}
\kwd{percolation}
\kwd{giant component}
\kwd{oriented percolation}
\kwd{epidemic models}
\kwd{Susceptible-Infected-Removed (SIR) process}
\end{keyword}

\end{frontmatter}

\section{Introduction}

Many stochastic processes, from statistical physics models to epidemics or information diffusion, take place on an underlying network.  This naturally {gives rise} 
to random subgraphs of the original graph, which in the
simplest cases is described by unoriented or oriented percolation, see \cite{InfectionGraph82} for infections with constant recovery time, and \cite{Kempe-Kleinberg} for information diffusion.  In both cases, the oriented subgraph stems from the fact the process is inherently directed, {with nodes infecting or informing their} neighbors independently
with probability $p$.

This  leads to the question whether the important properties of these processes depend on global properties of the network, like connectivity or bipartiteness (as in the case of anti-ferromagnetic spin models), or whether it is enough to know just local information, represented by the $k$-neighborhoods of random vertices in the original graph.  Specifically, we will look at the relative size of the giant component for unoriented percolation, while for oriented percolation, we will look at the fraction of nodes
with large fan-out (corresponding to the probability that a random {seed} leads to an outbreak / successful campaign) or large fan-in (corresponding to nodes likely to be infected in an outbreak).

As we will see, local information is not quite enough  - in addition, we will need what we call \emph{large-set expansion}, a condition which guarantees that for large sets, the size of the edge boundary of a set grows linearly in its size.   Under {this condition}, we show that the proportion of nodes with large fan-in or fan-out is indeed local.  Here locality will be formalized by the notion of  local convergence \cite{Aldous2004, benjamini2001}, see Section~\ref{sec: LWC} below for the precise definition.

The question of locality of unoriented percolation on expanders has recently received much attention in the probability community. In \cite{alon2004},  it was shown that on bounded degree expanders, there exists at most one linear size component {(giant)}.  If in addition, one assumes the existence of a local weak limit, one obtains locality of the  threshold for the appearance of a giant \cite{benjamini2009critical,sarkar2018note}, in the sense that it  can {be} inferred from the limit.  Less is known for the relative size of the giant. Indeed, for bounded degree expanders, locality of the size of the giant is only known for high girth {regular expanders
\cite{Krivelevich20HighGirthExapnder}.  In this case,} the relative size of the giant is given by the survival probability of a percolated branching process.

To our knowledge, no results are known for oriented percolation on expanders with local limit.

{While somewhat {tangential} to the purpose in this paper, we would be amiss not to mention the vast literature on the percolation threshold and the size of the giant for random graphs and for percolation on random graphs, starting with the work of Erd\H{o}s and R\'enyi \cite{Erdos}.
Since then, various other random models have been studies, from the random digraph of Karp \cite{Karp90}, to so-called configuration models \cite{CM-Bollobas,molloy_reed_1998,CM-JansonLuczak} and their directed analogues \cite{cooper_frieze_2004},
to percolation on regular random graphs \cite{GOERDT2001,pittel2008} and configuration models \cite{Fount07,Janson09,BR2015}.}  {Note that in all these models, the size of the giant is again given in terms of the survival probability of a suitable branching process.}

To state our results formally, we need the notion of large-set expanders.
 Formally, it is defined as follows: Given a graph $G=(V,E)$ and a constant $\epsilon<1/2$, we define
\begin{equation}
\label{phi}\phi(G,\epsilon)=\min_{A\subset V: \epsilon |V|\leq |A|\leq |V|/2}
\frac{e(A,V\setminus A)}{|A|}
\end{equation}
where  $e(A,V\setminus A)$ is the number of edges joining $A$ to its complement.
Call a graph $G$ an $(\alpha,\epsilon,\bar d)$ large-set expander if the average degree of $G$ is at most $\bar d$ and $\phi(G,\epsilon)\geq \alpha$.  A sequence of possibly random graphs $\{G_n\}_{n\in\mathbb{N}}$ is  called a \emph{large-set expander sequence with bounded average degree},  if there exists $\bar d<\infty$ and $\alpha>0$ such that for {all} $\epsilon\in (0,1/2)$, the probability that 
$G_n$ is an  $(\alpha,\epsilon,\bar d)$ large-set expander goes to $1$ as $n\to\infty$. 

To simplify our notation, we will take $G_n$ to be a graph on $n$ vertices.  As usual, we use $G(p)$ to denote the random subgraph obtained from a graph $G$ by independently keeping each edge with probability $p$.  Given a probability measure $\mu$ on $\mathcal G_*$, we then define
\begin{equation}\label{zeta}
    \zeta(p)=\mathbb E_\mu\big[\mathbb P_{G(p)}(|C(o)|=\infty )\big]
\end{equation}
where $o$ is the root in $(G,o)\sim \mu$ and $C(o)$ is the connected component of $o$ in $G(p)$, and we define the percolation threshold $p_c(\mu)$ of $\mu$ as
\begin{equation}\label{pc-limit}
p_c(\mu)=\inf_p\{p\in[0,1]: \zeta(p)>0\}.
\end{equation}
\newproof{Finally, we use 
a quenched notion of local weak convergence, namely that of local convergence in probability, see Section~\ref{sec: LWC} for the precise definition}

\begin{theorem} \label{thm: size of giant in expander}
Let
 $\{G_n\}_{n\in\mathbb{N}}$ be a sequence of (possibly random) large-set expanders with bounded average degree converging locally in probability to $(G,o)\in\mathcal{G}_*$ with non-random distribution $\mu$. 
 Let $C_{i}$ be the $i^{th}$ largest component of $G_n(p)$.
{If $p\neq p_c(\mu)$, then}
\[\frac{|C_1|}{n}\overset{\mathbb{P}}{\to}\zeta(p),\]
with $\overset{\mathbb{P}}{\to}$ denoting convergence in probability with respect to both $\mu$ and percolation. Moreover, for all $p\in[0,1]$,  $\frac{|C_2|}{n}\overset{\mathbb{P}}{\to}0, $
where the convergence is uniform \newproof{on any closed interval $I\subset (0,1)$ in that $\sup_{p\in I}\mathbb P(|C_2|\geq\epsilon n)\to 0$ for all $\epsilon>0$.}
\end{theorem}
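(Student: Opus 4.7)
The plan is to combine two ingredients: (i) a local-convergence count of the vertices lying in large clusters of $G_n(p)$, which supplies both an upper bound on $|C_1|/n$ and a lower bound on the total big-cluster mass; and (ii) a uniqueness statement, derived from large-set expansion via a sprinkling argument, which concentrates that mass into a single component. The subcritical case $p<p_c(\mu)$ follows from (i) alone, since $\zeta(p)=0$ then forces $|C_1|/n\to 0$ and $|C_2|\le |C_1|$, so I focus on $p>p_c(\mu)$.

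For (i), the key observation is that the event $\{|C_{G_n(p)}(v)|\ge k\}$ is local: it depends only on the percolation configuration on the graph-ball $B_{k-1}^{G_n}(v)$, because any connected subgraph of $G_n(p)$ of size $k$ containing $v$ lies within $G_n$-distance $k-1$ of $v$. Since Bernoulli percolation at a fixed rate commutes with local weak convergence, local convergence in probability delivers
\[
\frac{1}{n}\bigl|\{v:|C_{G_n(p)}(v)|\ge k\}\bigr|\ \overset{\mathbb P}{\longrightarrow}\ q_k:=\mathbb E_\mu\bigl[\mathbb P_{G(p)}(|C(o)|\ge k)\bigr],
\]
and $q_k\downarrow\zeta(p)$ as $k\to\infty$. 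A diagonal argument then selects $k_n\to\infty$ along which the left-hand side converges in probability to $\zeta(p)$, yielding the upper bound $|C_1|/n\le\zeta(p)+o_{\mathbb P}(1)$ and that the total mass in clusters of size $\ge k_n$ equals $\zeta(p)+o_{\mathbb P}(1)$.

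For (ii), I would use the sprinkling decomposition $G_n(p)\stackrel{d}{=}G_n(p_1)\cup G_n(\delta)$, choosing $p_1<p$ with $p_1>p_c(\mu)$ and $\delta=(p-p_1)/(1-p_1)$ small. Applying (i) at $p_1$, the vertex set $M$ of ``big'' $G_n(p_1)$-components (those of size $\ge\varepsilon n$) satisfies $|M|\ge(\zeta(p_1)-\eta)n$ and is a union of at most $\lfloor 1/\varepsilon\rfloor$ components. For any subset $S$ that is a union of whole $G_n(p_1)$-components with $\varepsilon n\le |S|\le n/2$, large-set expansion of $G_n$ gives $e_{G_n}(S,V\setminus S)\ge\alpha|S|=\Omega(n)$, and every such edge is independently resampled at rate $\delta$ in the sprinkling step. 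A Chernoff estimate combined with an iterative absorption argument---in which each big component first swallows $\Omega(n)$ vertices from neighboring small components and then collides with every other big one---shows that $M$ lies in a single $G_n(p)$-component with probability $1-o(1)$. Together with (i) this yields $|C_1|/n\overset{\mathbb P}{\to}\zeta(p)$ and $|C_2|/n\overset{\mathbb P}{\to}0$.

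Uniform convergence of $|C_2|/n\to 0$ on a closed $I\subset(0,1)$ follows as soon as the constants in the sprinkling step (Chernoff exponent, $\varepsilon$, $\eta$) are taken uniform in $p\in I$, which is possible because the expander parameters $(\alpha,\bar d)$ do not depend on $p$. The principal obstacle is the sprinkling step: in the bounded-average-degree, possibly non-tree-like setting, the $G_n$-boundary of a big component of $G_n(p_1)$ may point overwhelmingly into \emph{small} components rather than into other big components, so a naive ``one-shot'' union bound over pairs of prescribed big components does not produce an $\Omega(1/\delta)$-sized connecting set of edges; the merging must be established indirectly via the iterative absorption sketched above, which is the technically delicate part of the proof.
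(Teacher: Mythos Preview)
Your skeleton matches the paper's, but there are two genuine gaps. First, your sprinkling step gives $\liminf |C_1|/n \geq \zeta(p_1)$ for any fixed $p_1 \in (p_c(\mu),p)$; to match the upper bound $\zeta(p)$ you must send $p_1\uparrow p$, which requires left-continuity of $\zeta$ at $p$. This is not automatic: $\zeta$ is a decreasing limit of the continuous functions $\zeta_k$ and hence right-continuous, but left-continuity can fail in principle. The paper supplies it by proving that the limit $\mu$ of a (possibly random) large-set expander sequence with bounded average degree is \emph{extremal} among unimodular measures (extending a result of Sarkar for bounded-degree deterministic expanders), and then invoking a theorem of Aldous--Lyons to deduce continuity of $\zeta$ on $(p_c(\mu),1]$. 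This is a substantive ingredient you are missing. Second, your edge-based sprinkling runs into exactly the obstacle you name: the $G_n$-boundary of a union $S$ of big $G_n(p_1)$-components may lie entirely in small components, and the ``iterative absorption'' you sketch has no obvious termination. The paper sidesteps this completely. By large-set expansion and Menger's theorem there are $\Omega(n)$ edge-disjoint \emph{paths} in $G_n$ between any two vertex sets of size $\geq \epsilon n$; since $|E(G_n)|\leq \bar d n/2$, at least half of these paths have length $\leq l=\bar d/(\epsilon\alpha)$, and each short path survives an independent sprinkle $H\sim G_n(\beta)$ with probability $\beta^l$. Hence $\mathbb P(\text{no }A\text{--}B\text{ path in }H)\leq (1-\beta^l)^{\Omega(n)}$, and a union bound over the at most $2^{n/R}$ bipartitions of the family of size-$\geq R$ components finishes. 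The small-component mass is simply irrelevant in this argument, so no absorption is needed.

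Your route to the uniform bound on $|C_2|$ is also incomplete: the sprinkling argument needs $\zeta(p_1)$ bounded away from zero, so it cannot cover intervals $I$ that meet $(0,p_c(\mu)]$, and in the subcritical regime the pointwise bound $|C_2|\leq |C_1|\to 0$ does not by itself give uniformity. The paper proves $|C_2|/n\to 0$ uniformly on any $[q,1-q]$ by a separate, $p_c$-agnostic argument: an extension of Alon--Benjamini--Stacey to bounded-average-degree expanders under a tightness condition on the maximal degree in balls of fixed radius (which follows from local convergence), bounding the probability that a random edge is pivotal between two linear-size components.
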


\newproof{
\begin{remark} \label{rem:zeta-cont}
The restriction $p\neq p_c(\mu)$ can be removed for models where it is known that $\zeta$ is continuous at $p_c$. This includes many models where $\mu$ is supported on trees, including 
preferential attachment (where $p_c(\mu)=0$) and all $\mu$ supported on trees with more than $3$ ends, i.e., trees with at least $3$ disjoint path to $\infty$.

In fact, when proving the statement about the asymptotic size of the giant we first prove that it holds whenever $\zeta$ is continuous at $p$, and then prove that
under the assumption of the theorem, $\zeta$ is continuous except possibly at $p_c$.  To this end, 
 we generalize a result of Sarkar \cite{sarkar2018note}, and prove that a deterministic  measure $\mu$ on $\mathcal G_*$ is extremal in the set of unimodular measures on $\mathcal G_*$ when it is the local limit in probability of some (possibly random) sequence of large set expanders with  bounded average degree.  By a theorem of Aldous and Lyons \cite{aldous2007processes}, this in turn implies that $\zeta$ is continuous except possibly at $p_c(\mu)$\footnote{We thank the anonymous referee for pointing out this connection, and suggesting the generalization of Sarkar's results to our settings.}.
See Section~\ref{sec:zeta-cont}  for a more detailed discussion.
 \end{remark}
}

Our theorem generalizes previous  results  \cite{alon2004,benjamini2012, sarkar2018note,Krivelevich20HighGirthExapnder} in several directions, { allowing for applications to graph sequences sharing some of the features of more realistic network models.}
First, we remove the condition of bounded degrees, and replace it by bounded average degree, a condition which allows for power law graphs which were not included before.  As an illustrative example,
we consider preferential attachment   and show that the critical threshold for a linear sized giant is $p_c=0$,
with $\zeta(p)=e^{-\theta(1/p)}$ as $p\to 0$, corresponding to an infinite order phase transition (Theorem~\ref{thm: pref-attachment}).
Second, we remove the assumption that the graph $G_n$ is locally tree-like, and give an explicit expression for the asymptotic size of the giant regardless of whether or not the limit is given by a birth process.
Third, we relax the condition of expansion, to include graphs which are not necessarily connected - as a side benefit, we obtain a condition which in many cases is easier to verify (see Appendix~\ref{sec: expansion pref attachment} for the case of preferential attachment).

\newproof{Despite these generalizations, the proof of Theorem~\ref{thm: size of giant in expander}
relies mainly on extensions of known methods, including those of Alon, Benjamini and Stacey \cite{alon2004} and Krivelevich, Lubetzky and Sudakov  \cite{Krivelevich20HighGirthExapnder}.  These were a major motivation for our proofs, even though the randomness of the sequence $G_n$ induces some subtleties which need to be taken into account to avoid trivial counter examples.  We discuss these in Section~\ref{sec: unoriented expander}, and relegate the more standard techniques to an appendix.  }

As a corollary of {Theorem~\ref{thm: size of giant in expander}, one obtains} a generalization of the results from \cite{benjamini2009critical,sarkar2018note}on the ``locality'' of $p_c$; indeed, our theorem implies that for  sequences of large-set expanders with bounded average degree converging locally in probability, the critical threshold for the appearance of a giant is equal to $p_c$
defined in \eqref{pc-limit}. 
See Section~\ref{sec: defi percolation} for the precise definition of a critical threshold, and Corollary~\ref{cor: critical prob} below for a formal statement of this corollary.

The second {(and we believe technically more novel)} part of the paper concerns oriented percolation. To state our results, we need some additional notation: First, as usual, we say that a sequence of events, $(\mathcal E_n)$, holds with high probability if the probability of $\mathcal E_n$ goes to $1$ as $n\to\infty$.
Next, given a digraph, let $C^+(v)$ (and $C^-(v)$) be  the set of nodes $w$ that can be reached by an oriented path from $v$ to $w$ (from $w$ to $v$).  We refer to these sets as the  fan-out (and fan-in) of $v$.
As usual, the set $SCC(v)=C^+(v)\cap C^-(v)$ is called the strongly connected component of $v$.  For a strongly connected component $SCC$, we  use the symbol $SCC^+$ for the set of nodes $SSC^+=\bigcup_{v\in SCC}C^+(v)$ and  the symbol $SCC^-$ for the set of nodes $SSC^-=\bigcup_{v\in SCC}C^-(v)$.
{Finally, we use the symbol $D_G(p)$ to denote the random digraph obtained from a graph $G$ by first replacing each edge $\{u,v\}$ by two oriented edges $uv$ and $vu$ and then keeping each oriented edge independently with probability $p$.}

\newproof{Our next theorem  establishes the structure of $D_G(p)$ for any sequence that has a unique giant whose relative size converges in probability after undirected percolation, i.e., the sequence satisfies the {\it conclusion} of Theorem~\ref{thm: size of giant in expander}.

\begin{theorem}\label{thm: main structure SI}
Let $p\in (0,1]$ and let
$\{G_n\}_{n\in\mathbb{N}}$ be a sequence of (possibly random) graphs such that
\begin{enumerate}[(i)]
\item there exists $q\in (0,p]$ and a function 
$\zeta:[p-q,p]\to [0,1]$ that is left-continuous at $p$ such that
 $\frac{|C_1|}n\overset{\mathbb{P}}{\to} \zeta(p')$  for all $p'\in [p-q,p]$;
\item 
$\frac{|C_2|}n\overset{\mathbb{P}}{\to} 0$ uniformly in $[p-q,q]$.
 \end{enumerate}
\noindent  Let $SCC_i$ be the $i^{th}$ largest strongly connected component in $D_{G_n}(p)$.  
{Then} 
\begin{enumerate}
    \item \label{part: thm2 SCC2} Uniformly for all $p'\in [p-q,q]$,
    $$\frac{|SCC_2|}{n}\overset{\mathbb{P}}{\to} 0.$$ 
 
  \item\label{part: thm2 subcritical} If $\zeta(p)=0$ and $v\in {V(G_n)}$  is chosen uniformly  at random, then
 $$
\frac{ |{C}^+(v)|}n\overset{\mathbb{P}}{\to} 0,\quad
\frac{ |{C}^-(v)|}n\overset{\mathbb{P}}{\to}  0\quad\text{and}\quad
\frac{|SCC_1|}{n}\overset{\mathbb{P}}{\to} 0.
$$

\item\label{part: thm2 bowtie} If    $\zeta(p)>0$  then
$$
\frac{|SCC_1^+|}{n}\overset{\mathbb{P}}{\to} \zeta(p),
\quad
\frac{|SCC_1^-|}{n}\overset{\mathbb{P}}{\to} \zeta(p)
\quad\text{and}\quad
\frac{|SCC_1|}{{\mathbb E}[|SCC_1|]}\overset{\mathbb{P}}{\to} 1,
$$
with $$\liminf_{n\to\infty}\frac 1n{\mathbb E}[|SCC_1|]\geq \zeta^2(p).$$
Furthermore, if $v\in V(G_n)$ is chosen uniformly at random, then
with high probability, the following two statements  hold:
\begin{itemize}
\item either $v\notin SCC_1^-$ and $|{C}^+(v)|=o(n)$ or $v\in SCC_1^-$ and $|C^+(v)\Delta SCC_1^+|=o(n)$;
\item either
$v\notin SCC_1^+$ and $|{C}^-(v)|=o(n)$ or $v\in SCC_1^+$ and $|C^-(v)\Delta SCC_1^-|=o(n)$.
\end{itemize}
In particular, if $v\notin SSC_1^+\cup SCC_1^-$, both  $|{C}^+(v)|=o(n)$ and $|{C}^-(v)|=o(n)$.
\end{enumerate}

\end{theorem}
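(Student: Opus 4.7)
The plan is to establish the three parts in the order \textbf{(3)}, \textbf{(2)}, \textbf{(1)}, driven by two main tools.  The first is a \emph{coupling identity}
\[
C^+(v, D_{G_n}(p)) \stackrel{d}{=} C(v, G_n(p)),
\]
with the analogous statement for $C^-(v)$.  This identity follows by running the sequential BFS exploration in both $D_{G_n}(p)$ (using only outgoing edges) and in $G_n(p)$: in both processes, at each step the active vertex $v_t$ is processed and each $w\in N_{G_n}(v_t)\setminus (A_t\cup B_t)$ is added independently with probability $p$, using a fresh coin.  Since the Markovian transitions on the active/done sets coincide, the final explored vertex set has the same distribution.  The second tool is the sprinkling decomposition $D_{G_n}(p) = D_{G_n}(p_0) \cup D'_{G_n}(p_1)$ with $p_0\in[p-q,p)$ and $p_1=(p-p_0)/(1-p_0)$, which permits invocation of assumptions (i)-(ii) at $p_0$.

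For Part (2), the subcritical case $\zeta(p)=0$, assumptions (i) and (ii) applied at $p$ give $\max_v |C(v,G_n(p))|/n \overset{\mathbb P}{\to} 0$ (since $|C_1|/n\to 0$ and $|C_2|/n\to 0$ force all components to be $o(n)$).  The coupling identity then transfers this bound pointwise to $|C^+(v,D_{G_n}(p))|/n$ and $|C^-(v,D_{G_n}(p))|/n$, proving the first two conclusions.  Since any $v\in SCC_1$ satisfies $SCC_1\subseteq C^+(v)$, the bound $|SCC_1|/n \overset{\mathbb P}{\to} 0$ follows as well.

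For Part (3), the supercritical case $\zeta(p)>0$, assumptions (i)-(ii) make $|C(v,G_n(p))|$ sharply bimodal for a uniformly random vertex: it is approximately $\zeta(p)n$ when $v\in C_1(G_n(p))$ (with probability $\zeta(p)$) and $o(n)$ otherwise.  The coupling identity transfers this bimodality to $|C^+(v)|$, giving $|SCC_1^-|/n \overset{\mathbb P}{\to} \zeta(p)$; the backward statement $|SCC_1^+|/n\overset{\mathbb P}{\to}\zeta(p)$ follows by the symmetric identity for $C^-(v)$.  For the lower bound $\liminf |SCC_1|/n\geq \zeta^2(p)$, since $SCC_1=SCC_1^+\cap SCC_1^-$, I would exploit the independence of the two coin systems attached to the two orientations of each undirected edge:  write $D_{G_n}(p)\supseteq D^{(1)}\cup D^{(2)}$ as a union of independent copies $D^{(i)}\sim D_{G_n}(p^*)$ with $p^*\in[p-q,p)$ chosen as close to $p$ as the constraint $1-(1-p^*)^N\leq p$ allows (iterating the sprinkling $N$ times if necessary to stay within the allowed interval).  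Then the forward reach of $v$ in $D^{(1)}$ is independent of the backward reach of $v$ in $D^{(2)}$, yielding $\mathbb P(v\in SCC_1)\geq \zeta(p^*)^2$, and by left-continuity $\zeta(p^*)^2\to\zeta(p)^2$.  Finally, the bow-tie dichotomy — that for random $v$, either $|C^+(v)|=o(n)$ or $|C^+(v)\Delta SCC_1^+|=o(n)$, and similarly for $C^-(v)$ — follows from the bimodality (no room for intermediate fan-outs) combined with uniqueness of the undirected giant, which forces any linear-sized fan-out from $v\in SCC_1^-$ to coincide with $SCC_1^+$ up to $o(n)$.

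Part (1) is now immediate: if $\zeta(p)=0$, Part (2) already gives $|SCC_1|/n\to 0$; if $\zeta(p)>0$, a second linear-sized $SCC_2$ would, via the coupling identity, require a second linear-sized undirected component in $G_n(p)$ disjoint from $SCC_1^\pm$, contradicting (ii).  Uniformity on $[p-q,p]$ inherits directly from the uniform hypotheses.  The main obstacle is the $\zeta^2(p)$ lower bound on $|SCC_1|$: the coupling identity delivers $|SCC_1^+|$ and $|SCC_1^-|$ individually but not their intersection, and producing the square factor requires the two-independent-layer sprinkling argument described above while keeping all parameters within the assumed interval $[p-q,p]$, which is the most technically delicate step of the proof.
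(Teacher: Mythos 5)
Your single-vertex coupling identity $C^+(v)\overset{d}{=}C(v)$ is indeed the paper's Lemma~\ref{lem: coupling}, and Parts (1) and (2) and the $|SCC_1^\pm|/n\to\zeta(p)$ statements do follow roughly along the lines you sketch --- though note that for uniqueness of the SCC and for the bow-tie dichotomy you need the \emph{two-vertex} version of the coupling (that two vertices cannot have large, non-overlapping fan-outs unless $G_n(p)$ has two large components), not just the marginal identity; this is Part 2 of Lemma~\ref{lem: coupling} and Corollary~\ref{cor: random nodes out-component}, and your sketch uses it implicitly without justifying it.

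The genuine gap is in your treatment of $SCC_1$ itself. First, your two-independent-layer sprinkling cannot work as stated: writing $D_{G_n}(p)\supseteq D^{(1)}\cup D^{(2)}$ with independent $D^{(i)}\sim D_{G_n}(p^*)$ forces $1-(1-p^*)^2\le p$, i.e.\ $p^*\le 1-\sqrt{1-p}$, which is bounded away from $p$; iterating only shrinks $p^*$ further, so left-continuity of $\zeta$ never lets you recover $\zeta(p)^2$. Second, and more fundamentally, showing that a random $v$ has large fan-out in one layer and large fan-in in another shows only that $v$ has large fan-in \emph{and} fan-out --- it does not place $v$ in a linear-sized strongly connected component, which is exactly the hard point (cf.\ Remark~\ref{remark: pc for SCC}). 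The paper instead gets $\liminf\frac1n\mathbb E[|SCC_1|]\ge\zeta^2(p)$ from the FKG inequality applied to the events $\{u\in C^+(v)\}$ and $\{v\in C^+(u)\}$ together with Cauchy--Schwarz (Lemma~\ref{lm: expected SCC}), upgraded by uniqueness (Lemma~\ref{lem:E0fSSC1^2}). Third, your proposal never addresses the claim $\frac{|SCC_1|}{\mathbb E[|SCC_1|]}\overset{\mathbb P}{\to}1$, i.e.\ concentration of $|SCC_1|$ without an explicit formula for its mean; this is the technically central step of the paper (Lemma~\ref{lm: variance}) and requires the Falik--Samorodnitsky inequality, Russo's formula, and the uniform bound on the influence of a single edge on $|SCC_1|$ (Lemma~\ref{lm: bounded influence}) --- none of which follows from the coupling identity or from sprinkling. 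Without this step you have a lower bound on the expectation but no proof that a linear-sized SCC exists with high probability.
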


 }

\begin{figure}[htbp]\label{fig: bow-tie}
        \resizebox{0.6\textwidth}{!}{
		\begin{tikzpicture}
			\node [rotate=270,trapezium, trapezium angle=60, minimum width=50mm, draw, thick] at (1, 0) {};
				 
			\node [rotate=90,trapezium, trapezium angle=60, minimum width=50mm, draw, thick] at (5, 0) {};

			\draw [line width=1,fill=white] (3, 0) ellipse (2 and 1.5);
			\node at (3, 0) {$SCC_1$};
			
			\node (A)[minimum size=4.2cm] at (3, 0) {};
	        \foreach \v in {-1, 0, 1}
					\draw[->,>=stealth] (A) -- +( 3 ,\v);
					
			\foreach \v in {-1, 0, 1}
					\draw[->,>=stealth] ( 0 ,\v) -- +(A);
					
			\draw[decorate, decoration={brace, amplitude=3pt}, line width=1] (-0.5, 2.7) -- (5, 2.7) node[pos=0.5, above=1pt] {$SCC_1^-\simeq L^+$};
						
			\draw[decorate, decoration={brace, amplitude=3pt,mirror}, line width=1] (1, -2.7) -- (6.5, -2.7) node[pos=0.5, below=1pt] {$SCC_1^+\simeq L^-$};

\coordinate (c) at (7,0);
  \draw[gray,rounded corners=.5mm] (c) \irregularcircle{.3cm}{1mm};
  \coordinate (e) at (9,0);
  \draw[gray,rounded corners=.5mm] (e) \irregularcircle{.3cm}{1mm};
    \coordinate (f) at (8,-1);
  \draw[gray,rounded corners=.5mm] (f) \irregularcircle{.3cm}{1mm};
  
			\node[gray] at (8, -1.5) {disconnected components};
		\end{tikzpicture}
		}
		\caption{The structure of oriented percolation on expanders implied by Theorem \ref{thm: main structure SI}. In the supercritical case, there exists a unique linear sized SCC, $SCC_1$, that almost all nodes with large fan-out reach that (are in $SCC_1^-$), and all nodes with large fan-in are reachable from it (are in $SCC_1^+$). }
		
\end{figure}
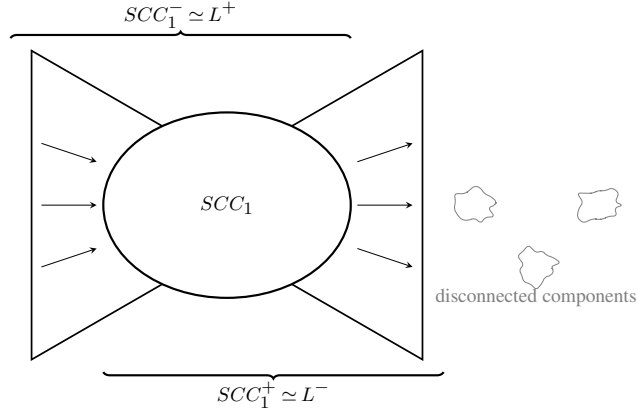

\begin{remark}
 The theorem implies the following ``bow-tie structure'' for $D_{G_n}(p)$ {when $\zeta(p)>0$:}
 Define the  bow-tie as the induced graph on $SCC_1^-\cup SCC_1^+$ with the left and right wing being given by the vertices in $SCC_1^-\setminus SCC_1$ and $SCC_1^+\setminus SCC_1$, respectively.  The theorem then implies that (up to $o(n)$ exceptions) with high probability all vertices not in the bow-tie will have fan-in and fan-out of size $o(n)$. In fact, all but at most $o(n)$ vertices  fall into one of {the following} four classes: (i) the giant strongly connected component (the center of the bow-tie), consisting of the vertices with large fan-in and large fan-out; (ii) the left (and  (iii) the right wing) consisting of the vertices with large fan-out and small fan-in (large fan-in and small fan-out), and (iv) the remaining ``dust'', consisting of vertices which have small fan-in and fan-out%
\footnote{The theorem is actually slightly stronger, since it says that this structure holds even if we define large fan-in and fan-out by requiring only that these sets contain order $n$ vertices - for almost all vertices, large fan-in or fan-out will then automatically give fan-in / fan-out of at least the size of the giant SCC.}.  See Figure~\ref{fig: bow-tie} for a demonstration.

The bow-tie structure was first described on an experimental analysis of the web graph \cite{Broder2000}. Later, Cooper and Frieze \cite{cooper_frieze_2004} {established}
the bow-tie structure of directed configuration model with maximum degree $o(n^{1/12})$. Later work, weakened the condition on maximum degree to $o(n^{1/4})$ in \cite{Graf}, and $o(\sqrt n)$ in \cite{cai2020giant}.
To the best of our knowledge, this is the first work showing the bow-tie structure {for oriented percolation on}
general expanders {(and more generally, oriented percolation on any model for which the conclusions of Theorem~\ref{thm: size of giant in expander} hold).}


 \end{remark}
 

\begin{remark}\label{remark: pc for SCC}
The bow-tie structure established in  Theorem~\ref{thm: main structure SI} says in particular that the size of the giant $SCC$ is asymptotically equal to the number of vertices whose fan-in and fan-out is linear in $n$.  {Under the assumption of local convergence of $G_n$ in probability, }
one might therefore conjecture that $\frac 1n|SCC_1|$ converges in probability to
\begin{equation}\label{eq: zeta+-}
 \zeta^{+-}(p)=
 \mu\Big(\mathbb P_{D_{G}(p)}(|C^+(o)|=\infty\text{ and }|C^-(o)|=\infty)\Big).
\end{equation}
While our technology is not strong enough to prove this for general limits $\mu$,
we can prove that $\zeta_{+-}(p)$ is an asymptotic upper bound on $\frac 1n |SCC_1|$ whenever $G_n$ converges locally in probability to $\mu$, see Lemma~\ref{lem: upper bound SCC1} below.
\newproof{See also Remark~\ref{rem:SCC-tree} for a simple case where we can prove that $\frac 1n|SCC_1|$ converges to $ \zeta^{+-}(p)$ in probability.}
\end{remark}


Let us mention two applications of our results.  The first one is the  SIR (Susceptible-Infected-Recovered) infection model with fixed recovery time. In this model, each node can have three states: susceptible, infected or recovered. Each infected node  {infects each} of its neighbors independently according to a Poisson process with  rate of $\lambda$, and recovers after a fixed time (say one unit of time).  So, an infected vertex has an opportunity to infect any of its neighbors independently with probability $p=\frac{\lambda}{\lambda+1}$.   While in this model, an actual infection starting from a particular node gives an infection tree describing all nodes that get eventually infected, it is often useful to capture the structure of a possible infection independently of the initial node, by defining an \emph{infection digraph} which in our notation is nothing but the random digraph $D_G(p)$.  This gives a coupling of the infections starting at all possible seed vertices $v$, with the fan-out of $v$ being exactly the set of nodes getting sick eventually in an infection starting at $v$.  
{The structure of the bow-tie following from Theorem~\ref{thm: main structure SI} then implies that with high probability
\begin{itemize}
    \item an infection
starting in $SCC_1^-$ will infect all vertices in $SCC_1^+$, plus at most $o(n)$ extra vertices;
\item with the exception of up to $o(n)$ vertices, an infection starting in the complement of $SCC_1^-$ will only infect $o(n)$ other vertices.
\end{itemize} 
Together with the third statement of the theorem, we conclude that  if we infect {a} uniform random vertex in the network, the asymptotic probability and {the} size of an outbreak is $\zeta(p)$ and 
$n\zeta(p)$, respectively.}

Another application of  oriented percolation concerns information cascades. In this model, agents (nodes) are either informed or uninformed. Once an agent is informed they have only one chance to communicate the information to any of their contacts (neighbors in network), and the information will be shared successfully with probability $p$.
This is a special case of information cascade model considered in \cite{Kempe-Kleinberg} and {many follow ups}, where the success probability over all edges of the network is equal to $p$.  Similar to the infection digraph in SIR model, one can define an information digraph in which a directed edge from $u$ to $v$ represents the event that conditioned on $v$ being the first note to be informed, the information is shared  successfully along the edge $uv$.  As a result of our theorem on oriented percolation, one can estimate the {expected number} of  nodes that {will have been informed at the end of the cascade if the initial seed is chosen uniformly at random, or more generally, if a set of initial seeds {are} chosen uniformly at random, provided} the underlying network is an expander with bounded average degree that {converges locally in probability.}

We close this introduction with a final remark.

{
\begin{remark}
In parallel to our work, Remco van der Hofstad developed a different approach to the locality of the giant in unoriented random graphs \cite{Hofstad2021}.  He showed that if the random graph sequences has a local limit, and the property that it is unlikely that two random vertices lie in distinct, large components, the relative size of the giant is given by the probability that the origin in the limit lies in an infinite component.  This hints at a possible, alternative approach to proving the first statement in Theorem~\ref{thm: size of giant in expander}; as in our proof, one would first establish local  convergence of the percolated sequence in probability, but then use a sprinkling argument to prove that the \emph{assumptions} of \cite{Hofstad2021} are satisfied at the points of continuity of $\zeta(p)$, rather than directly proving our  Proposition~\ref{prop: lower bound giant} below. 
Unfortunately, an application of our {sprinkling} arguments only gives a condition which is weaker than required for an application of the results of \cite{Hofstad2021}.  So at the moment, the two methods seem to be complementary, establishing uniqueness of the giant for different sets of random graphs.
\end{remark}
}

\subsection{Overview of the Paper}
In Section \ref{sec: defs}, we set up notations and terminology, including the notion of local  convergence {in probability}
(Section \ref{sec: LWC}) and the formal definition of the threshold for the appearance of a giant and strongly connected giant (Section \ref{sec: defi percolation}). 
Finally, {in Section~\ref{sec:zeta-cont} we discuss the continuity of $\zeta$}, and in Section \ref{sec: Russo}, we  review a concentration bound that follows from a beautiful result of Falik and Samorodnitsky \cite{falik} and will be used later in the proof of Theorem~\ref{thm: main structure SI}.  The reduction
of the concentration bound to the results of \cite{falik}
is given in Appendix \ref{apx: Russo}.

Theorems \ref{thm: size of giant in expander} is proved in 
Section \ref{sec: unoriented expander}, {where on the way of proving it we also show that the relative size of the second largest component in expanders with bounded average degree converges uniformly to zero (see Lemma \ref{lem: uniform convergence second component}). The proof of this lemma {uses the techniques of Alon, Benjamini and Stacey \cite{alon2004}, {extending} their results for expanders with bounded degrees to  {large set} expanders with bounded average degrees and local limit in probability, and} {is given} in Appendix \ref{app:C2}}.

Section \ref{sec: coupling} is a stand-alone section
that explores the relation between oriented and unoriented percolation on general graphs via a natural coupling.
 Building upon this coupling {and our results for unoriented percolation}, Theorem \ref{thm: main structure SI} is then proved in Section~\ref{sec: oriented expander}. {One of the main technical difficulties in this section is the proof of concentration of the size of the strongly connected giant, without having an explicit formula for its expectation.  While a proof based on Russo's lemma and suitable bounds on influences might seem natural to the expert, it turns out be quite tricky for the oriented case, due to the fact that a single edge can join many small strongly connected components which without this edge were just sitting on a directed path, ``without a path back''.  This makes the size of strongly connected giant  much less ``local''  than the undirected analog.  }

In  Section \ref{sec: pref attachment}, we apply  Theorem \ref{thm: size of giant in expander} to   preferential attachment models.
{The details of many of the proofs are deferred to appendices:}
{The proof of expansion for
preferential attachment models
is
proven in Appendix \ref{sec: expansion pref attachment}; and finally,
upper and lower bounds for the survival probability 
of the limiting branching process after percolation are proven in
Appendix \ref{sec: appendix polya point}.
}


\section{Notations, Definitions, and  Preliminaries}\label{sec: defs}
For a graph $G$, let $V(G)$ be the set of vertices and $E(G)$ be the set of edges. 
{As usual, a rooted graph is a graph with one particular node $v$ designated as the root; We will use the notation  $(G,v)$ to denote a rooted graph with root $v$.
A graph isomorphism between two graphs $G_1$ and $G_2$ is a bijection  $\phi:V(G_1)\to V(G_2)$ such that $\{v,w\}\in E(G_1)$ if and only if
$\{\phi(v),\phi(w)\}\in E(G_2)$. If the two graphs are rooted, we also require that $\phi$ maps the root of $G_1$ to that of $G_2$.  {We will use $\mathcal{G}_*$
to denote the space of equivalence classes of locally finite, rooted graphs under these isomormphisms.}

The $k$-neighborhood of a vertex $v$ in $G$ is defined as the induced subgraph on the set of nodes of graph distance at most $k$ from $v$, and will be denoted by $B_k(G,v)$.  If $G$ is clear from the context, we just write $B_k(v)$ instead of $B_k(G,v)$.  }

\subsection{Local  Convergence in Probability}\label{sec: LWC}

As usual,
local  convergence \cite{Aldous2004, benjamini2001} is defined in terms of a  metric $d_{loc}$ on  
$\mathcal{G}_*$:
given two rooted graphs $(G_1,o_1)$ and $(G_2,o_2)$, their ``local distance''
 is defined as
\[d_{loc}((G_1,o_1),(G_2,o_2))=\frac{1}{1+\inf_k\{B_k(G_1,o_1)\not\simeq B_k(G_2,o_2)\}},\]
where $\simeq$ denotes equivalence under isomorphisms which map the roots $o_1$ and $o_2$ into each other.
The function $d_{loc}$ defines a topology on the space of rooted graphs and local convergence of a sequence of graphs is defined with respect to that topology.     
  {Since the}
finite graph $G_n$   {we consider is typically not} 
rooted,
we choose a root uniformly at random,
\[\mathcal{P}_n=\frac{1}{n}\sum_{o_n\in V(G_n)}\delta_{(G_n,o_n)}.\]
For non-random sequence $G_n$, local weak convergence to a measure $\mu$ on $\mathcal{G}_*$ is defined by the requirement that $\mathbb{E}_{\mathcal{P}_n}[f]{\to}\mathbb{E}_\mu[f]$ for all bounded continuous functions $f$ on $\mathcal{G}_*$.  \newproof{If $G_n$ is random, there are three commonly considered notions of {local} convergence:  convergence in distribution, convergence in probability, and almost sure convergence, see Chapter 2 in \cite{RemcoVol2} for an overview.   For convergence in distribution (also called annealed), one requires that the expectations of $f$ with respect to both $P_n$ and the randomness of $G_n$ converge, while for the other two, the randomness of $G_n$ is fixed (quenched). 
 In this paper, we will use convergence in probability for the quenched version which we now define formally.}

 Consider thus a sequence of random graphs $\{G_n\}_{n\in\mathbb{N}}$, and a {(non-random)} probability $\mu$ on $\mathcal{G}_*$. We say  $G_n$ converges \emph{locally} in probability  to $\mu$ if for any bounded continuous function $f:\mathcal{G}_*\longrightarrow \mathbb{R}$,
\begin{equation}\label{eq: lwc definiiton}
\mathbb{E}_{\mathcal{P}_n}[f|G_n]\overset{\mathbb{P}}{\to}\mathbb{E}_\mu[f],
\end{equation}
where in $\mathbb{E}_{\mathcal{P}_n}[f|G_n]$ we only take expectations with respect to the random root in $G_n$. So, $\mathbb{E}_{\mathcal{P}_n}[f|G_n]$ can be random variable due to the conditional dependence on graph $G_n$ (in the case where the sequence $G_n$ is random). 
While in principle, convergence in probability allows for convergence to a random measure $\mu$, in which case $\mathbb{E}_\mu[f]$ would be random, in this paper, we will assume that the limiting measure $\mu$ on rooted graphs is non-random.

\newproof{Note that this restriction rules out certain random graph sequences: a sequence $\{G_n\}_{n\in \mathbb N}$ where $G_n$ is a random $3$-regular graph on $n$ nodes with probability $1/2$, and a random $4$-regular graph on $n$ nodes with probability $1/2$ will not have a deterministic limit $\mu$, while the union of two disconnected graphs of the same size where one is $3$-regular and one is $4$ regular has a deterministic limit  $\mu$ (with $\mu(G,o)$ being $1/2$ if $G$ is a $3$ or $4$ regular infinite tree, and $0$ otherwise).  Note that by contrast, both sequences converge to this deterministic measure $\mu$  if we consider convergence in distribution.}

\subsection{Thresholds for the Existence of a Giant for Unoriented and Oriented Percolation}\label{sec: defi percolation}

{The notion of a threshold for the appearance of various structures in finite, random graphs is a well known concept from random graph theory, 
with the question of the locality of the threshold for the appearance of a giant for percolation on bounded degree expanders being part of the literature on which this paper is building. For the convenience of the reader, and to define these concepts for the oriented case, we give a precise definition below.}

{We  use $\mathbb{P}_{G(p)}$ and $\mathbb{E}_{G(p)}$ to denote probabilities and expectations with respect to percolation on a graph $G$, and $\mathbb{P}_{D_G(p)}$ and $\mathbb{E}_{D_G(p)}$ for the oriented analogues. } Expectations with respect to the distribution $\mu$ of rooted random graphs describing the  local limit are denoted by $\mathbb{E}_\mu$. 
{Finally, we use the standard notation}  $a\vee b$ and $a\wedge b$ for the maximum and minimum of two real number $a$ and $b$.

We will say that a sequence $p_n$ is a
threshold sequence\footnote{When $p_n>\alpha$ for some $\alpha>0$ independent of $n$ {this
coincides with the notion of a sharp threshold, as  defined, e.g.,} in \cite{JansonBook}.} for the existence of a giant component (in short, the percolation threshold) if  for
  {all} $\epsilon>0$ 
and 
$c>0$
\[\mathbb{P}\big(G_n(   0\vee (p_n-\epsilon))\text{ contains a component of size at least } cn \big)\rightarrow 0 ,\]
and for   all $\epsilon>0$ 
there exists some $c>0$   {such} that
\[\mathbb{P}\big(G_n(1\wedge   (p_n+\epsilon))\text{ contains a component of size at least } cn \big)\rightarrow 1.\]
With a slight abuse of notation, we will write $p_n=p_c(G_n)$ to denote a threshold sequence\footnote{{Strictly speaking, the correct formal notation would be $(p_n)\in p_c(G_n)$ to stress the fact that $(p_n)$ remains a threshold sequence if we add a tern $\epsilon_n$ which goes to $0$ as $n\to\infty$.}}.
If $G_n$ is random, the above probabilities are with respect to percolation,
$\mathbb P=\mathbb P_{G_n(0\vee (p_n-\epsilon))}$ and 
$\mathbb P=\mathbb P_{G_n(1\wedge (p_n-\epsilon))}$, and convergence becomes convergence in probability (with respect to the randomness of $G_n$). 
We define the critical  threshold for the appearance of a giant SCC in $D_{G_n}(p)$ in the same way, with the only difference being that the word ``components'' is replaced by strongly connected components; we will use the notation $p_c^{SCC}(G_n)$ such a threshold sequence.

\begin{remark}
These two definitions immediately raise the question whether the two thresholds are related.  First, it turns out that asymptotically, the two must be the same if they both exist (see Corollary~\ref{cor: SCC threshold general graph} below).  But even without the assumption that both exist, we know quite a bit; in fact, without any prior assumptions on the existence of either threshold, we know that if the probability
that $G_n(p_n)$ contains a giant of size $c n$ or larger goes to $0$, then the probability that $D_{G_n}(p_n)$
contains a giant SCC of this size goes to zero as well (Corollary~\ref{cor:pc-lower-bd} below).  In the other direction we know that if the expectation of the giant component in
$G_n(p_n)$ is bounded below by $cn$ for some $c>0$, then
the expectation of the giant SCC in $D_{G_n}(p_n)$ is bounded by $c'n$ for some {$c'>0$}.  While this allows us to conclude that
once $G_n(p_n)$ has a giant with high probability, the expectation of the giant SCC in $D_{G_n}(p_n)$ is at least of order $n$, this does not imply existence of a giant SCC  with high probability; for arbitrary sequences of graphs, we just don't have enough control over the variance.
\end{remark}

Recall our definition \eqref{pc-limit}
of the percolation threshold for an 
infinite rooted graph $(G,o)$ with law $\mu$.
While 
it might seem natural to define $p_c^{SCC}({\mu})$ for the appearance of a strongly connected component similarly, the naive definition turns out not to be useful for identifying the threshold of a locally convergent sequence. {This is because} a giant strongly connected component for graphs with large girth might not correspond to a SSC in the limit graph.
Instead, we consider the event that both the fan-in and the fan-out of the root is infinite,
\[p_c^{+-}(\mu)=\inf\{p\in[0,1]:\zeta^{+-}(p)>0\},\]
with $\zeta^{+-}(p)$ given by \eqref{eq: zeta+-}.
With this definition, we will prove $p_c^{+-}(\mu)$ is the threshold 
for the appearance of a giant SCC in $D_{G_n}(p)$ if
$G_n$ is a sequence of large-set expanders with bounded average degrees that converges to $(G,o)$.
In fact, it is not hard to see
that 
 for arbitrary rooted random graphs $$p_c^{+-}(\mu)=p_c(\mu)$$
 (see Lemma \ref{lm: zeta+- > zeta^2}), consistent with the fact  that if both the appearance of a giant and the appearance of a giant SCC have a threshold sequence, the two must be asymptotically equal (Corollary~\ref{cor: SCC threshold general graph}).

\newproof{
 \subsection{Continuity of $\zeta$}\label{sec:zeta-cont}
 In this section, we elaborate on Remark~\ref{rem:zeta-cont}. To this end, we
recall that when $\mu$ is a local limit of some (possibly random) sequence $G_n$, it obeys a symmetry relation known as  
 unimodularity \cite{benjamini2001}, see,   e.g., \cite{alon2004} for the definition of unimodularity.  
 
 Next, we point out that our definition of $p_c(\mu)$ in \eqref{pc-limit} differs from the standard definition of $p_c$ for random rooted graphs, as in, e.g., in \cite{lyons2011indistinguishability}.  In particular, in these papers, $p_c$ is a function of the random graph $(G,o)$ drawn from $\mu$, i.e., 
 $$p_c(G,o)=\inf\{p\colon\exists x\in V(G)\text{ s.t. }{{\mathbb P}_{G(p)}}(|C(x)|=\infty)\},
 $$
 with $C(x)$ denoting the connected cluster of $x$ in $G(p)$.
 If the limit is extremal in the set $\mathcal U$ of unimodular measures then $p_c(G,o)$ is almost surely a constant, and in that case, it will be equal to our definition of $p_c(\mu)$ in \eqref{pc-limit}.
 
 Finally, for extremal measures in $\mathcal U$, Aldous and Lyons showed that if $p_c(\mu)<p_1<p_2$, then $\mu$-almost surely, every
 infinite cluster in $G(p_2)$ contains an infinite cluster in $G(p_1)$ (Theorem 6.7 in \cite{aldous2007processes}). This in turn implies continuity of $\zeta$ for all $p\neq p_c(\mu)$ by the standard arguments (see, e.g., \cite{van1984continuity}).  Furthermore, 
 Theorem 8.11 from 
 \cite{aldous2007processes} gives continuity at $p_c$ for non-amenable extremal $\mu\in U$, which in particular holds for extremal trees with at least three ends, i.e., three disjoint infinite path in the tree under consideration.

 Extremality of $\mu$ was proven in \cite{sarkar2018note} when $\mu$ is the local weak limit of a non-random sequence of expanders of bounded degree.
 In Appendix~\ref{sec: contintuiy of zeta} we 
generalize this proof to measures  
 $\mu$ that arises as the local limit in probability of a 
 (possibly random) sequence of large-set expanders of uniformly bounded average degree.  As just explained, this immediately gives continuity of $\zeta$ for $p\neq p_c(\mu)$.

 \begin{corollary}\label{cor: zeta-cont}
Let $\{G_n\}$ be a sequence of (possibly random) large-set expanders of bounded average degree that converge locally in probability to $\mu$, and let $\zeta(p)$ and $p_c(\mu)$ be as in \eqref{zeta} and \eqref{pc-limit}. Then $\zeta(p)$ is continuous for all $p\neq p_c(\mu)$.
\end{corollary}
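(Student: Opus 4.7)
The plan is to follow the two-step route outlined in Remark~\ref{rem:zeta-cont}: first establish that the limit measure $\mu$ is extremal in the space $\mathcal U$ of unimodular probability measures on $\mathcal G_*$, and then invoke Theorem~6.7 of Aldous and Lyons \cite{aldous2007processes} together with the standard sprinkling argument to deduce continuity of $\zeta$ away from $p_c(\mu)$.

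For the extremality step I would generalize Sarkar's argument \cite{sarkar2018note}, which was written for non-random expanders of bounded \emph{maximum} degree, to our setting of (possibly random) large-set expanders of bounded \emph{average} degree that converge in probability. Suppose for contradiction that $\mu$ is not extremal in $\mathcal U$. Then there exists a Borel rerooting-invariant event $A \subset \mathcal G_*$ with $\lambda := \mu(A) \in (0, 1/2]$ (after replacing $A$ by its complement if necessary). Approximate $\mathbf 1_A$ by a local cylinder event $f_k : \mathcal G_* \to \{0,1\}$ depending only on $B_k(o)$, with $\mu\{f_k \neq \mathbf 1_A\} \to 0$. Because $A$ is constant along each connected component, for any two neighbors $o \sim v$ the event $\{f_k(G, o) \neq f_k(G, v)\}$ is contained in the union of two small events, and one obtains
\[
I(k) \;:=\; \mathbb E_\mu\bigl[\#\{v \sim o : f_k(G, v) \neq f_k(G, o)\}\bigr] \;\longrightarrow\; 0 \quad \text{as } k \to \infty.
\]
Set $S_n := \{v \in V(G_n) : f_k(G_n, v) = 1\}$. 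By local convergence in probability applied to the bounded continuous function $f_k$, $|S_n|/n \overset{\mathbb P}{\to} \lambda$, while the edge boundary satisfies $2\, e(S_n, V(G_n) \setminus S_n) = \sum_v g_k(G_n, v)$, where $g_k(G, o) := \#\{v \sim o : f_k(G, v) \neq f_k(G, o)\}$. Choosing $k$ large enough that $I(k) < \alpha \lambda / 4$ and then $n$ large, one would make $\tfrac 1n e(S_n, V(G_n) \setminus S_n) \leq \alpha \lambda / 2$ with high probability, contradicting the $(\alpha, \epsilon, \bar d)$ large-set expansion of $G_n$ applied with any $\epsilon < \lambda$.

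The main obstacle is that $g_k$ is unbounded on $\mathcal G_*$ (it is bounded only by $\deg(o)$), so local convergence in probability does not directly control $\tfrac 1n \sum_v g_k(G_n, v)$. I would handle this by truncation: write $g_k = (g_k \wedge D) + (g_k - g_k \wedge D)$, apply local convergence to the bounded piece $g_k \wedge D$, and control the tail via $g_k \leq \deg$ together with uniform integrability of $\deg$ under $\mu$. The latter follows from $\mathbb E_\mu[\deg] \leq \bar d$, obtained by applying local convergence to $\deg \wedge D'$ for growing $D'$ and invoking Fatou together with the bounded-average-degree hypothesis on $G_n$. An analogous truncation, using that $\tfrac 1n \sum_v \deg_{G_n}(v) \mathbf 1_{\{\deg(v) > D\}}$ can be made uniformly small in probability by choosing $D$ large, controls the vertex sum on the $G_n$ side. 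Once extremality of $\mu$ is in hand, the standard monotone coupling of $G(p)$ via i.i.d.\ Uniform$[0,1]$ weights on edges yields right-continuity of $\zeta$ at every $p$ by dominated convergence, while Theorem~6.7 of \cite{aldous2007processes} (every infinite cluster in $G(p_2)$ contains an infinite cluster in $G(p_1)$ for $p_c(\mu) < p_1 < p_2$) combined with the classical sprinkling argument of van den Berg and Keane \cite{van1984continuity} yields left-continuity of $\zeta$ on $(p_c(\mu), 1]$. On $[0, p_c(\mu))$ the function $\zeta$ vanishes identically, so continuity there is trivial, completing the proof.
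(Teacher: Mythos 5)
Your overall architecture coincides with the paper's: prove that $\mu$ is extremal in the set of unimodular measures (generalizing Sarkar), then invoke Theorem~6.7 of Aldous--Lyons together with the van den Berg--Keane argument for left-continuity above $p_c(\mu)$, right-continuity of an increasing limit of continuous functions everywhere, and $\zeta\equiv 0$ below $p_c(\mu)$. That second half is fine. The problem is in your extremality argument, which takes a genuinely different route from the paper's and breaks exactly at the point you flag as "the main obstacle."

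The gap is the claim that $\tfrac 1n\sum_v \deg_{G_n}(v)\mathbf 1_{\{\deg(v)>D\}}$ can be made uniformly small by taking $D$ large. Local convergence in probability controls only bounded local functionals, so it gives $\tfrac 1n\sum_v (\deg\wedge D) \to \mathbb E_\mu[\deg\wedge D]$, while the bounded-average-degree hypothesis only gives $\tfrac 1n\sum_v\deg(v)\leq\bar d$. The difference, $\limsup_n \tfrac 1n\sum_v \deg(v)\mathbf 1_{\{\deg(v)>D\}}$, can therefore be as large as $\bar d-\mathbb E_\mu[\deg]$ for \emph{every} $D$, and this quantity need not vanish: take a bounded-degree expander on $n-\lfloor\sqrt n\rfloor$ vertices with a clique on $\lfloor\sqrt n\rfloor$ vertices attached by one edge. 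This is still a large-set expander of bounded average degree converging locally in probability (to the regular tree), yet the clique carries $\Theta(n)$ total degree on vertices all of degree $>D$, so your tail term tends to a positive constant. Consequently your upper bound on $e(S_n,V\setminus S_n)$ does not close: $\Theta(n)$ boundary edges could in principle run between high-degree vertices on opposite sides of $S_n$, invisible to the local limit, and nothing in the hypotheses rules this out. (A smaller expository issue: your claim $I(k)\to 0$ needs the mass-transport principle to convert the "neighbor disagrees with $A$" term into a root event weighted by $\deg(o)$; that part is repairable since $\deg\in L^1(\mu)$.) The paper's proof avoids the edge-boundary upper bound entirely: it uses tightness and regularity of $\mu$ to extract compact sets $H_1,H_2$ on which $f$ takes well-separated values, Sarkar's ball-distinctness radius $R$, and a uniform degree bound on $(R+K)$-balls over $H_i$; local convergence then only needs to produce two vertex classes $A_{1,n},A_{2,n}$ of linear size (a \emph{lower} bound on sizes of sets defined by bounded indicators), after which Menger's theorem plus the average-degree count — exactly as in the Sprinkling Lemma — forces a path of length at most $K=\bar d/(\epsilon\alpha)$ between them, contradicting ball-distinctness. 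If you want to keep your approach, you would need to add uniform integrability of the empirical degree distribution as a hypothesis, which the corollary does not assume.
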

 }

 \subsection{Concentration Bounds}\label{sec: Russo}
 {One of the technically  difficult parts of this paper is the proof of concentration for the size of the giant strongly connected component without explicit control of its expectation.  To this end, we will use a
 concentration inequality going back to
the work of Falik and Samorodnitsky \cite{falik}.

 Given a positive integer $m$, let $x$ denote vectors $x=(x_e)_{e\in [m]}$, and for $0<p<1$, let $\mathbb P_p$
 be the independent product measure on $\{0,1\}^m$ with marginals $\mathbb P(x_e=1)=p$ (in our application, $m$ will be twice the number of edges in $G$, and $\mathbb P_p$ will be the oriented percolation measure $\mathbb P_{D_G(p)}$).}
  For $x\in \{0,1\}^m$ and $e\in [m]$, we use $x\oplus e$, $x\cup\{e\}$ and  $x\setminus \{e\}$ to denote the Boolean vector obtained from $x$ by flipping the bit $e$,  replacing it by $1$, or replacing it by $0$, respectively.
For an increasing function $f:\{0,1\}^m\to\mathbb R$, define the influence of an edge
\[
\Delta_e f(x)=f(x)-{\Big(p f(x\cup\{e\})+(1-p)f(x\setminus \{e\})\Big)},
\] 
and
\[\mathcal{E}_2(f)=\sum_{e\in[m]}\mathbb{E}_{p}
[|\Delta_e f|^2]\quad\text{and}\quad\mathcal{E}_1(f)=\sum_{e\in[m]}\mathbb{E}^2_{ p}
|\Delta_e f|.
\]
The results of Falik and Samorodnitsky \cite{falik} then imply the following bounds.
\begin{lemma} \label{lm: falik}
Let $f$,  $\mathcal{E}_1(f)$ and $\mathcal{E}_2(f)$ be  as above.
Then 
\[\frac{1-2p}{\log\frac{1-p}{p}}\operatorname{var}(f)\log\Big(\frac{\operatorname{var}(f)}{\mathcal{E}_1(f)}\Big)\leq \mathcal{E}_2(f).\]
\end{lemma}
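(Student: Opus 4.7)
The inequality is a biased-cube version of the Falik--Samorodnitsky variance--entropy bound, and I would derive it by combining two classical ingredients: the log-Sobolev inequality on the biased hypercube and a martingale lower bound on the entropy. Specifically, the plan is to sandwich $\operatorname{Ent}_p(f^2)$ from above by $\mathcal E_2(f)$ (times the log-Sobolev constant) and from below by $\operatorname{var}(f)\log(\operatorname{var}(f)/\mathcal E_1(f))$, after which chaining the two bounds produces exactly the stated inequality.

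\emph{Upper bound via log-Sobolev on the biased cube.} Tensorizing the one-bit log-Sobolev inequality for Bernoulli$(p)$, whose sharp constant equals $\log\bigl(\tfrac{1-p}{p}\bigr)/(1-2p)$ (for $p\neq 1/2$, and $2$ in the limit $p=1/2$), I obtain
\[
\operatorname{Ent}_p(f^2)\;\le\;\frac{\log\frac{1-p}{p}}{1-2p}\,\mathcal E_2(f).
\]
To identify the right-hand side with the standard Dirichlet form, I use that $\mathbb E_p[(\Delta_e f)^2\mid x_{\neq e}]=p(1-p)\bigl(f(x^{(e,1)})-f(x^{(e,0)})\bigr)^2$, so that $\sum_e\mathbb E_p[(\Delta_e f)^2]$ equals $p(1-p)$ times the standard gradient form.

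\emph{Lower bound via the Falik--Samorodnitsky martingale lemma.} Decompose $f-\mathbb E_p[f]=\sum_{e=1}^m f_e$ into the martingale differences $f_e=\mathbb E_p[f\mid x_1,\ldots,x_e]-\mathbb E_p[f\mid x_1,\ldots,x_{e-1}]$. By orthogonality $\operatorname{var}(f)=\sum_e\mathbb E_p[f_e^2]$, and since $f_e=\mathbb E_p[\Delta_e f\mid x_1,\ldots,x_e]$, Jensen's inequality yields $\mathbb E_p[|f_e|]\le\mathbb E_p[|\Delta_e f|]$, and hence $\sum_e\mathbb E_p[|f_e|]^2\le\mathcal E_1(f)$. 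The central lemma of Falik and Samorodnitsky then states
\[
\operatorname{Ent}_p(f^2)\;\ge\;\sum_e\mathbb E_p[f_e^2]\,\log\frac{\mathbb E_p[f_e^2]}{\mathbb E_p[|f_e|]^2};
\]
its proof is purely martingale-theoretic and extends verbatim to the biased product measure. Applying the log-sum inequality with $(a_e,b_e)=(\mathbb E_p[f_e^2],\mathbb E_p[|f_e|]^2)$ collapses this into $\operatorname{Ent}_p(f^2)\ge\operatorname{var}(f)\log(\operatorname{var}(f)/\mathcal E_1(f))$.

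Combining the two bounds and rearranging gives the lemma. The main point to verify carefully is that both ingredients carry the correct $p$-dependent constants: the log-Sobolev inequality with its sharp constant on biased Bernoulli (classical, going back to Higuchi--Yoshida and Diaconis--Saloff-Coste), and the martingale entropy lemma under a biased product measure (a routine extension of \cite{falik}, since only the orthogonality of martingale differences and Jensen's inequality are used). These two constants match precisely so as to produce the prefactor $\frac{1-2p}{\log((1-p)/p)}$ in the statement.
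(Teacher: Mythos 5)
Your proof is correct and follows essentially the same route as the paper: the paper cites Falik--Samorodnitsky's Theorem 2.2 directly (which already packages the log-Sobolev upper bound and the martingale entropy lower bound into one inequality with the log-Sobolev constant of the biased cube), and then performs the same translations $\mathcal E_2(f)=p(1-p)\,\mathbb E_x\bigl[\sum_{y\sim x}(f(x)-f(y))^2\bigr]$ and $\sum_e\mathbb E^2|d_e|\le\mathcal E_1(f)$ that you do; your derivation of the latter via $d_e=\mathbb E_p[\Delta_e f\mid x_1,\dots,x_e]$ and Jensen is a clean substitute for the paper's appeal to Lemma 3.1 of \cite{falik}. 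The only cosmetic point is that both the entropy lower bound and the log-Sobolev upper bound should be applied to the centered function $f-\mathbb E_p[f]$ (entropy is not translation invariant, while $\operatorname{var}$, $\Delta_e$, $\mathcal E_1$ and $\mathcal E_2$ are), which your martingale decomposition already implicitly does.
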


For $p=1/2$ (with the prefactor replaced by its limit, $1/2$), the lemma
is essentially equivalent to Lemma 2.1 in \cite{Benjamini2006SubmeanVB}.  While apparently not realizing that its proof required the uniform measure, Lemma 2.1 from \cite{Benjamini2006SubmeanVB} was restated in \cite{benjamini2012} for general $p$, rendering several of the technical lemmas in that paper incorrect.  However, the needed changes do not invalidate the main results of \cite{benjamini2012}, since they all concern values of $p$ bounded away from zero, where the difference just amounts to a difference in an overall constant. We give the reduction of
Lemma~\ref{lm: falik} to the results of reference \cite{falik} in
Appendix \ref{apx: Russo}.

We close our preliminaries by recalling Russo's formula. 
Given an increasing event $A\subset \{0,1\}^m$ and $x\in \{0,1\}^m$, we define 
$e\in [m]$ to be pivotal if exactly one of $x$ and $x\oplus e$ is in $A$.  The Margulis--Russo formula  \cite{Russo1981OnTC} then says that
\begin{equation}\label{russo-1}
 \frac d{dp}\mathbb P_{p}(A)=\sum_{e\in [m]}\mathbb P_{p}(e\text{ is pivotal}).
\end{equation} 
We will also need the analog for an increasing function
$f:\{0,1\}^m\to\mathbb R$,  which states that
\begin{equation}\label{russo-2}
\sum_{e\in [m]}\mathbb E_{ p}\,[|\Delta_e f|]=2p(1-p) \frac d{dp}\mathbb E_{ p}\,[f].
\end{equation}


\section{Locality of Unoriented Percolation on Expanders}\label{sec: unoriented expander}
In this section, we prove Theorem~\ref{thm: size of giant in expander}. 
To this end, 
we first establish a lemma stating that local  convergence  {in probability} allows for the control of both expectations and concentration of local quantities in the percolated graph $G_n(p)$ (Lemma~\ref{lm: local function - percolated graph}).  {Note that despite its apparent simplicity, the lemma is slightly subtle, and in particular does not hold if one only assumes local  convergence in distribution instead of local convergence in probability.}

Using this lemma, it will be straightforward to  upper bound the asymptotic size of the giant by $\zeta(p) n$.  For the lower bound, we will use a sprinkling argument, which in its simplest form goes back to Erd\H{o}s \cite{Erdos},
and is at the core of most previous work on locality in percolation including that of 
 Alon, Benjamini and Stacey \cite{alon2004}. 
We recall that throughout this paper, we assume that all our sequences $(G_n)$ are growing; in particular, we assumed without loss of generality that $G_n$ is a graph on $n$ vertices, which may or may not be random.

\begin{lemma}\label{lm: local function - percolated graph}
Let $\mu$ be a {(non-random)} probability distribution on $\mathcal{G}_*$, and 
$\{G_n\}_{n\in\mathbb{N}}$ be a sequence of (possibly random) graphs 
that converge locally in probability  to $(G,o)\in\mathcal{G}_*$ with distribution $\mu$. For $p\in[0,1]$ and a positive integer $k$, let $f_k:\mathcal{G}_*\rightarrow\mathbb{R}$ be a bounded and continuous function defined on the $k$-neighborhood of a node. Then 
\[\mathbb{E}_{\mathcal{P}_n}[f_k(G_n(p),o_n){\mid G_n}]\overset{\mathbb{P}}{\to}\mathbb{E}_{\mu_p}[f_k(G(p),o)],\]
where convergence in probability is over the possible randomness of $G_n$ and percolation, and $\mu_p$ is {the deterministic measure on $ \mathcal G_*$} defined by first choosing  $(G,o)$ with respect to $\mu$, {then drawing a graph $G(p)$ via percolation, and finally replacing $G$ by the connected component of $o$ in $G(p)$.}
\end{lemma}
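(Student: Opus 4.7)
The plan is to reduce the claim to local convergence in probability for a deterministic bounded continuous function on $\mathcal G_*$ by integrating out the percolation, and then to control the resulting empirical fluctuations via a truncation argument.

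I would first introduce
$$
\tilde f(G, o) := \mathbb E_{G(p)}[f_k(G(p), o)].
$$
Since $f_k$ depends on its argument only through the $k$-neighborhood of the root, and since $B_k(G(p), o)$ is a deterministic function of the induced subgraph $B_k(G, o)$ and the percolation values on its edges (any $G(p)$-path from $o$ of length at most $k$ must stay inside $B_k(G, o)$), the quantity $\tilde f(G, o)$ depends on $(G, o)$ only through the isomorphism class of $B_k(G, o)$. Hence $\tilde f$ is bounded by $\|f_k\|_\infty$ and is locally constant (in particular, continuous) in $d_{\mathrm{loc}}$. Applying the hypothesis of local convergence in probability to $\tilde f$, and noting $\mathbb E_\mu[\tilde f] = \mathbb E_{\mu_p}[f_k(G(p), o)]$ by Fubini, I would obtain
$$
\mathbb E_{\mathcal P_n}[\tilde f(G_n, o_n) \mid G_n] = \frac{1}{n}\sum_{v} \tilde f(G_n, v) \;\overset{\mathbb P}{\to}\; \mathbb E_{\mu_p}[f_k(G(p), o)].
$$
Writing $X_n := \mathbb E_{\mathcal P_n}[f_k(G_n(p), o_n) \mid G_n] = \frac{1}{n}\sum_v f_k(G_n(p), v)$, the conditional mean of $X_n$ given $G_n$ is exactly this empirical average of $\tilde f$, so the proof reduces to showing $X_n - \mathbb E[X_n \mid G_n] \overset{\mathbb P}{\to} 0$.

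For the concentration, the key observation is that, conditional on $G_n$, the variables $f_k(G_n(p), v)$ and $f_k(G_n(p), w)$ are independent whenever the induced balls $B_k(G_n, v)$ and $B_k(G_n, w)$ share no edges, which in particular holds whenever $d_{G_n}(v, w) > 2k$. The naive variance bound then reads
$$
\operatorname{Var}(X_n \mid G_n) \le \|f_k\|_\infty^2 \cdot \frac{1}{n}\, \mathbb E_{\mathcal P_n}[|B_{2k}(G_n, o_n)| \mid G_n],
$$
which cannot be made small directly because local convergence in probability provides no uniform moment control on $|B_{2k}(G_n, o_n)|$. I would therefore truncate: for $K \ge 1$, set $X_n^{(K)} := \frac{1}{n}\sum_v f_k(G_n(p), v)\,\mathbbm{1}(|B_{2k}(G_n, v)| \le K)$. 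In this truncated sum each $v$ contributes at most $K$ partners within $G_n$-distance $2k$, so $\operatorname{Var}(X_n^{(K)} \mid G_n) \le \|f_k\|_\infty^2 K / n \to 0$, and combining this with local convergence in probability applied to the bounded continuous local function $\tilde f \cdot \mathbbm{1}(|B_{2k}| \le K)$, Chebyshev's inequality gives $X_n^{(K)} \overset{\mathbb P}{\to} \mathbb E_\mu[\tilde f \cdot \mathbbm{1}(|B_{2k}| \le K)]$ for each fixed $K$.

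To finish I would handle the tail: $|X_n - X_n^{(K)}| \le \|f_k\|_\infty \cdot \mathcal P_n(|B_{2k}(G_n, o_n)| > K \mid G_n)$, and applying local convergence in probability to the locally constant indicator $\mathbbm{1}(|B_{2k}| > K)$ shows this converges in probability to $\mu(|B_{2k}| > K)$, which tends to $0$ as $K \to \infty$ because $|B_{2k}|$ is $\mu$-almost surely finite. A standard $\varepsilon$-argument then closes the proof: fix $\varepsilon > 0$, choose $K$ large enough that $\mu(|B_{2k}| > K)$ controls both the tail and the difference $\mathbb E_\mu[\tilde f] - \mathbb E_\mu[\tilde f \cdot \mathbbm{1}(|B_{2k}| \le K)]$, and then let $n \to \infty$. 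The step I expect to be the main obstacle is this concentration estimate: the absence of uniform moment control on $|B_{2k}(G_n, o_n)|$ under local convergence in probability alone forces the truncate-plus-tightness route, which works only because the same hypothesis, applied to $\mathbbm{1}(|B_{2k}| > K)$, supplies precisely the uniform tightness needed for the tail.
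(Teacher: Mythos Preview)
Your proof is correct and follows essentially the same route as the paper: both integrate out the percolation to reduce to the bounded local function $\tilde f(G,o)=\mathbb E_{G(p)}[f_k(G(p),o)]$ and then establish concentration via a second-moment argument exploiting independence of $f_k(G_n(p),v)$ and $f_k(G_n(p),w)$ when $d_{G_n}(v,w)>2k$. The only difference is in how the near-pair contribution is controlled: the paper bounds the full second moment directly and invokes Corollary~2.20 of \cite{RemcoVol2} (the distance between two uniform random vertices diverges under local convergence in probability) to show that the fraction of pairs at distance $\le 2k$ vanishes, whereas you truncate on $\{|B_{2k}(G_n,v)|\le K\}$ and use tightness of $|B_{2k}|$ under $\mu$. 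Your truncation-plus-tightness argument is exactly how one proves that corollary, so the two proofs are the same up to whether this step is inlined or cited.
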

\begin{remark}
The lemma implies  that if 
$\{G_n\}_{n\in\mathbb{N}}$ is locally  convergent in probability to $(G,o)\sim \mu$, then 
$\{G_n(p)\}_{n\in\mathbb{N}}$ is locally  convergent in probability to $(G(p),o)\sim \mu_p$.  {To prove this, we need to extend the statement to all bounded, continuous functions, which in turn requires tightness.  But tightness is obvious here.  Indeed,}
all one needs to observe  is that if $\mathcal G_{N}$ is the set of all $(G,o)\in \mathcal G_*$ with at most $N$ nodes, then $\mu(\mathcal G_{ N})\geq 1- \epsilon$ for some $N$.  Local  convergence in probability then implies that the same statement  holds (with $\epsilon$ replaced by $2\epsilon$) for the probability distribution of $(G_n,x_n)$ and all large enough $n$, with probabilities with respect to both the randomness of $G_n$ and $x_n$.  But this property is inherited by the percolated graphs $G_n(p)$, which {gives the desired tightness.}
\end{remark}

\begin{proof}
{Recall that $\mathbb{E}_{G_n(p)}$ denotes expectation with respect to percolation only; if $G_n$ is random, these expectations are still conditioned on the random graph $G_n$.}
To prove lemma we use the second moment method.

\newproof{As a preliminary, we start with the observation that the distance of two vertices does not decrease after percolation i.e., if $dist_{G(p)}(x,y)\leq k$, then $dist_G(x,y)\leq k$ as well.  As a consequence, the values of $f_k$ evaluated on a percolated graph $(G(p),x)$ only depend on the induced subgraph on the vertices of distance at most $k$ from $x$ in $G$.  This has two consequences: (i) the expectation of $f_k$ with respect to percolation  depends only on the $k$-neighborhood of the root, and (ii) expectations of products factor if the roots are at least distance $2k+1$  apart.  Explicitly, if we define $f_{k,p}:\mathcal{G}_*\rightarrow\mathbb{R}$ by
$f_{k,p}(G,o)= \mathbb{E}_{G(p)}[f_k(G(p),o)] $, then (i)
$f_{k,p}(G_1,o_1)=f_{k,p}(G_2,o_2)$ whenever $d_{loc}\big((G_1,o_1),(G_2,o_2)\big)\leq\frac{1}{k+1}$, and (ii)
$\mathbb{E}_{G(p)}[f_k(G(p),o_1)f_k(G(p),o_2)] =f_{k,p}(G,o_1)f_{k,p}(G,o_2)$ whenever $dist_G(x,y)\geq 2k+1$.}

With these preparations, the proof now is relatively straightforward once we take into account a corollary from \cite{RemcoVol2} concerning the distance of two random vertices in sequences of locally convergent graphs, see below.

We start with the first moment, i.e., 
\[\mathbb{E}_{G_n(p)}\big[\mathbb{E}_{\mathcal{P}_n}[f_k(G_n(p),o_n)]\,\big|\, G_n\big]\overset{\mathbb{P}}{\to}\mathbb{E}_{\mu_p}[f_k(G(p),o)],\]
where convergence in probability is over possible randomness of $G_n$. 
By  the linearity of expectation, \[\mathbb{E}_{G_n(p)}\big[\mathbb{E}_{\mathcal{P}_n}[f_k(G_n(p),o_n)]{\,\mid G_n}\big]=\mathbb{E}_{\mathcal{P}_n}\big[\mathbb{E}_{G_n(p)}[f_k(G_n(p),o_n)]\,{\big| \,G_n}\big]
=\mathbb{E}_{\mathcal{P}_n}\big[f_{k,p}\mid G_n\big].\] 
{Our observation (i) above,} together with the assumption that $f$ is bounded, 
shows that  $f_{k,p}$ is a bounded continuous function on $\mathcal{G}_*$. 
By the definition of local  convergence in probability, we have that \[\mathbb{E}_{\mathcal{P}_n}[f_{k,p}{\mid G_n}]\overset{\mathbb{P}}{\to}\mathbb{E}_\mu[f_{k,p}(G,o)]=\mathbb{E}_{\mu_p}[f_{k}(G(p),o)].\]
 
\newproof{For the second moment,  we compute
\[\mathbb{E}_{G_n(p)}\Big[\Big(\mathbb{E}_{\mathcal{P}_n}[f_k(G_n(p),{o_n})]\Big)^2\,\Big|\, G_n\Big]
=
\frac 1{n^2}\sum_{u,v\in[n]} \mathbb{E}_{G_n(p)}\big[f_k(G_n(p),v)f_k(G_n(p),u){\,\big|\, G_n}\big],\]
which we write as
\begin{align*}\label{eq: second momemnt fk}
\frac 1{n^2}\sum_{u,v\in[n]} &\mathbb{E}_{G_n(p)}\Big[f_k(G_n(p),v) f_k(G_n(p),u){\mid G_n}\Big]=\\
=&\frac1{n^2}\sum_{u,v\in[n]:\atop dist_{G_n}(u,v)\leq r}\mathbb{E}_{G_n(p)}\big[f_k(G_n(p),v) f_k(G_n(p),u){\mid G_n}\big]\\
&\qquad\qquad+\frac 1{n^2}\sum_{u,v\in[n]:\atop dist_{G_n}(u,v)> r}\mathbb{E}_{G_n(p)}\big[f_k(G_n(p),v) f_k(G_n(p),u){\mid G_n}\big]
\end{align*}
{where we choose $r=2k$.  By  our observation (ii) from the beginning of this proof, the second term can be rewritten as}
\begin{align*}
&\sum_{u,v\in[n]:\atop dist_{G_n}(u,v)> r}\mathbb{E}_{G_n(p)}\big[f_k(G_n(p),v) f_k(G_n(p),u){\mid G_n}\big]\\
&\qquad=\sum_{u,v\in[n]:\atop dist_{G_n}(u,v)> r}\mathbb{E}_{G_n(p)}\big[f_k(G_n(p),v){\mid G_n}\big]\mathbb{E}_{G_n(p)}\big[f_k(G_n(p),u){\mid G_n}\big],
\end{align*}
{implying that
\begin{align*}
\Big|\mathbb{E}_{G_n(p)}\Big[\Big(\mathbb{E}_{\mathcal{P}_n}[f_k(G_n(p),{o_n})]\Big)^2\Big| G_n\Big]&-
\Big(\mathbb{E}_{G_n(p)}\Big[\mathbb{E}_{\mathcal{P}_n}[f_k(G_n(p),{o_n})]\Big| G_n\Big]\Big)^2\Big|\leq\\
\leq 2\alpha^{(2)}_{\leq r}(G_n))\|f\|_\infty
\end{align*}
where $\alpha^{(2)}_{\leq r}(G_n)$ is the fraction of pairs $u,v\in V(G_n)$ such that $dist_{G_n}(u,v)\leq r$.}
By Corollary 2.20 in \cite{RemcoVol2}, for any two vertices $u$ and $v$ chosen independently and uniformly at random from $[n]$, the distance between them grows with $n$, i.e., $dist_{G_n}(u,v)\overset{\mathbb{P}}{\to}\infty$,
{where the probability is with respect to both the randomness of $G_n$ and the random choice of $u$ and $v$.  But this implies that $\alpha^{(2)}_{\leq r}(G_n))\to 0$ in probability (with respect to the random choice for $G_n$),}
and thus}
\[\frac{\operatorname{var}_{G_n(p)}\Big(\mathbb{E}_{\mathcal{P}_n}(f_k){\mid G_n}\Big)}{\mathbb{E}^2_{G_n(p)}\Big(\mathbb{E}_{\mathcal{P}_n}(f_k){\mid G_n}\Big)}\overset{\mathbb{P}}{\to}0,\]
where  convergence in probability is on random graphs $G_n$. So, by Chebyshev's inequality,
\[\frac{\mathbb{E}_{\mathcal{P}_n}(f_k{\mid G_n})}{\mathbb{E}_{G_n(p)}[\mathbb{E}_{\mathcal{P}_n}(f_k){\mid G_n}]}\overset{\mathbb{P}}{\to}1,\]
Then by convergence of the first moment,
\[\mathbb{E}_{\mathcal{P}_n}(f_k{\mid G_n})\overset{\mathbb{P}}{\to} \mathbb{E}_{\mu_p}(f_k).\]
\end{proof}

Next we state a lemma which will be used in our sprinkling argument.
 Its statement  (and its proof) are
similar to those used in the work of Alon Benjamini and Stacey \cite{alon2004} and later in \cite{benjamini2009critical,Krivelevich20HighGirthExapnder}, but we avoid the assumption of uniformly bounded maximal degree, and only uses large-set expansion instead of expansion.  
We  recall \eqref{phi} and the definition of 
 $(\alpha,\epsilon,\bar d)$-large-set expanders as graphs $G$ with average  degree at most $\bar d$ and $\phi(G,\epsilon)\geq \alpha$. 
\begin{lemma}[Sprinkling Lemma]\label{lem: sprinkling}
Let  $G$ be an $(\alpha,\epsilon,\bar d)$-large-set expander on $n$ vertices, and let $H$ be an instance of $G(\beta)$ for some $\beta{\in (0,1]}$.
Given $R>0$,  let $\mathcal{S}$ be a family of disjoint  subsets of $V(G)$, each of size at least $R$.  For two sets $A$ and $B$ define an $A$-$B$ {path} as a path with one endpoint in $A$ and another in $B$. Then 
\[\mathbb{P}(\exists A,B\subset 
\mathcal{S}: |\bigcup_{V_i\in A} V_i|\geq \epsilon n,  |\bigcup_{V_i\in B} V_i|\geq \epsilon n, \text{no $A$-$B$ path in $H$})\leq e^{\frac{n}{R}-cn},\]
where $c$ is a constant that depends on $\beta$, $\epsilon$, $\alpha$, and $\bar d$, but it is independent of $R$ and $n$. 
\end{lemma}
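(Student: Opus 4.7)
The plan is to follow the sprinkling strategy of Alon, Benjamini, and Stacey \cite{alon2004}, with modifications to accommodate bounded average degree and large-set expansion (rather than the bounded-maximum-degree and full-expansion conditions used there).

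First, I would reduce the event to a union, over pairs of disjoint subsets $(\mathcal{S}_1,\mathcal{S}_2)$ of $\mathcal{S}$ with $|\cup_{V_i\in\mathcal{S}_1} V_i|\geq\epsilon n$ and $|\cup_{V_i\in\mathcal{S}_2}V_i|\geq\epsilon n$, of the event $\{X\not\leftrightarrow_H Y\}$, where $X$ and $Y$ denote the corresponding unions.  Since the $V_i$ are disjoint and each of size $\geq R$, we have $|\mathcal{S}|\leq n/R$, so the number of such partitions is at most $3^{n/R}$, contributing an $e^{O(n/R)}$ factor to the union bound.  It therefore suffices to show that $\mathbb{P}(X\not\leftrightarrow_H Y)\leq e^{-c'n}$ for each pair, for some $c'=c'(\alpha,\epsilon,\beta,\bar d)>0$; the factor $\log 3$ can then be absorbed into the final constant $c$.

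Next, I would observe that if $X$ and $Y$ are not $H$-connected, letting $S$ be the $H$-component of $X$ gives $X\subset S$, $Y\cap S=\emptyset$, and all $G$-edges across the cut $(S,V\setminus S)$ absent in $H$.  Since $\epsilon n\leq |S|\leq n-\epsilon n$, the large-set expansion assumption $\phi(G,\epsilon)\geq\alpha$ yields
\[
e_G(S,V\setminus S)\;\geq\;\alpha\min(|S|,|V\setminus S|)\;\geq\;\alpha\epsilon n,
\]
so for a fixed such cut, the probability that all these edges are absent in $H$ is $(1-\beta)^{e_G(S,V\setminus S)}$.  Summing over possible cuts with $X\subset S\subset V\setminus Y$ and $|S|\in [\epsilon n, n-\epsilon n]$ gives
\[
\mathbb{P}(X\not\leftrightarrow_H Y)\;\leq\;\sum_{S:\,X\subset S\subset V\setminus Y}(1-\beta)^{e_G(S,V\setminus S)}.
\]

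The main obstacle is controlling this last sum without the bounded-maximum-degree assumption: the classical argument of \cite{alon2004} uses bounded degree to show that the number of connected subsets with edge-boundary $k$ is at most $(C\bar d)^k$, which then combines with $k\geq\alpha\epsilon n$ to produce exponential decay.  To bypass bounded degree, my plan is to pass to a low-degree core $G^{\circ}=\{v\in V:\deg_G(v)\leq D\}$ for some $D=D(\bar d,\alpha,\epsilon,\beta)$ sufficiently large; by Markov's inequality $|V\setminus V(G^{\circ})|\leq \bar d n/D$, which can be made arbitrarily small compared with $\alpha\epsilon n$.  One then runs the Peierls-type counting of cuts inside $G^{\circ}$ (where the bounded-degree bound applies), and treats the remaining high-degree vertices as a negligible perturbation that alters the sizes of $S$ and $\partial S$ by at most $O(\bar d n/D)$.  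Combining the resulting per-partition bound $e^{-c'n}$ with the $3^{n/R}$ union bound over partitions yields the claimed $e^{n/R-cn}$ estimate.
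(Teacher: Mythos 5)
Your reduction to a union bound over pairs of subfamilies of $\mathcal S$ and the observation that expansion forces $e_G(S,V\setminus S)\geq\alpha\epsilon n$ for the set $S$ of vertices $H$-reachable from $X$ are both fine, but the core of your argument --- the Peierls-type count of cuts, rescued by passing to a low-degree core --- has two genuine gaps. First, the counting step itself: $S$ is a union of $H$-components of the set $X=\bigcup_{V_i\in A}V_i$, which is itself a union of many disjoint pieces, so $S$ has no reason to be connected in $G$; the standard bound ``at most $(C\bar d)^k$ connected subsets with edge-boundary $k$'' does not apply, and the number of arbitrary cuts separating $X$ from $Y$ with boundary of size $k$ is not $e^{O(k)}$ in general. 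Second, and more fatally, the low-degree core does not behave as a negligible perturbation. Markov's inequality controls the \emph{number} of vertices of degree $>D$ (at most $\bar dn/D$), not the number of \emph{edges} incident to them, which can be a constant fraction of all edges. Concretely, take a sparse non-expanding graph (say a path) plus a single hub joined to every vertex: this is an $(\alpha,\epsilon,\bar d)$-large-set expander with bounded average degree, yet every boundary edge of every relevant cut passes through the hub, so deleting the high-degree vertices destroys both the expansion and the lower bound $e_{G^\circ}(S,V\setminus S)\geq\alpha\epsilon n$ entirely. Your claim that the perturbation alters $\partial S$ by $O(\bar dn/D)$ is therefore false.

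The paper avoids cut-counting altogether. From $e_G(A,B\text{-separating cuts})\geq\alpha\epsilon n$ it invokes Menger's theorem to extract $\alpha\epsilon n$ \emph{edge-disjoint} $A$-$B$ paths in $G$; since the total number of edges is at most $\bar dn/2$ (this is exactly where bounded average degree enters), at least half of these paths have length at most $\ell=\bar d/(\epsilon\alpha)$. Each short path is fully open in $H$ with probability at least $\beta^{\ell}$, and edge-disjointness makes these events independent, so the probability that no $A$-$B$ path survives is at most $(1-\beta^{\ell})^{\alpha\epsilon n/2}=e^{-cn}$, uniformly over the pair $(A,B)$; the $2^{n/R}$ union bound then finishes. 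Note that this argument handles the hub example above effortlessly (the edge-disjoint paths are just length-two paths through the hub), which is precisely where your approach breaks down. If you want to salvage your route you would need a counting argument for separating cuts that works with unbounded degrees and disconnected sources, which is a substantially harder problem than the one you are trying to solve.
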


\begin{proof}
Let $A$ and $B$ be two disjoint subsets of $\mathcal{S}$ that each contain at least $\epsilon n$ vertices of $G$.
By the large-set expansion of $G$, we know that we need to remove at least $\epsilon \alpha n$ edges to disconnect vertices of $A$ and $B$.
Therefore, by Menger's theorem  there are $\epsilon \alpha n $ edge-disjoint paths between $A$ and $B$  in $G$ \cite{MengerZurAK}. 
There are at most $\bar dn/2$ edges in the graph in total. Therefore, at least half of these paths has a length bounded by $l=\frac{\bar d}{\epsilon \alpha}$. Let $P$ be the set of paths between $A$ and $B$ of length at most $l$. The probability that none of these paths appear in $H$ is at most
$(1-\beta^l)^{\epsilon \alpha n/2}$. Hence,
\[\mathbb{P}( \text{no path between $A$ and $B$ in $H$})\leq \exp(-\beta^l\epsilon\alpha n/2).\]

Given $\mathcal{S}$, there are at most $2^{\frac{n}{R}}$ ways to choose disjoint subsets $A$ and $B$.
By a union bound over all possible partitions we find that the probability that such a partition exists is at most
\[2^{\frac{n}{R}}\exp(-\beta^l\epsilon\alpha n/2)
\leq 2^{\frac{n}{R}}\exp(-\beta^{\bar d/\epsilon\alpha}\epsilon\alpha n/2)
,\]
which gives the result for $c=\beta^{\bar d/(\epsilon \alpha)} \epsilon \alpha/2$. 
\end{proof}

\newproof{The proof of Theorem  \ref{thm: size of giant in expander}
follows from the following proposition which generalizes a recent result of Krivelevich, Lubetzky and Sudakov  \cite{Krivelevich20HighGirthExapnder}, and Lemma~\ref{lem: uniform convergence second component} below which is 
a straightforward generalization of a results of Alon, Bejamini and Stacy \cite{alon2004}. }

\begin{prop}\label{prop: lower bound giant}
Let $\{G_n\}_{n\in\mathbb{N}}$ be a sequence of graphs satisfying the assumptions of Theorem \ref{thm: size of giant in expander}, {and let
$p$ be a continuity point of $\zeta$.}
Then for any $\epsilon>0$
\[\mathbb{P}\left(\zeta(p)-\epsilon\leq\frac{|C_1(G_n(p))|}{n}\leq \zeta(p)+\epsilon\right)\rightarrow 1,\]
For the upper bound, neither  the
{continuity assumption at $p$, nor the}
assumption of expansion, nor that of bounded average degrees is needed.
\end{prop}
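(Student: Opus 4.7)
Observe that the event $\{|C_{G(p)}(o)|\ge K\}$ depends only on $G(p)\cap B_{K-1}(o)$, so $g_K(G,o):=\mathbf{1}[|C_{G(p)}(o)|\ge K]$ is a bounded local function.  Lemma~\ref{lm: local function - percolated graph} therefore gives
\[
\mathbb E_{\mathcal P_n}\big[g_K(G_n(p),\cdot)\,\big|\,G_n\big]\overset{\mathbb P}{\longrightarrow}\mathbb E_\mu\big[\mathbb P_{G(p)}(|C(o)|\ge K)\big],
\]
and the right-hand side decreases to $\zeta(p)$ as $K\uparrow\infty$ by monotone convergence. Since on the event $\{|C_1|\ge K\}$ every vertex of $C_1$ lies in a component of size $\ge K$, we get $|C_1|/n\le \mathbb E_{\mathcal P_n}[g_K\mid G_n]$; choosing $K$ so that $\mathbb E_\mu[\mathbb P_{G(p)}(|C(o)|\ge K)]\le \zeta(p)+\epsilon/2$ yields $\mathbb P(|C_1|/n>\zeta(p)+\epsilon)\to 0$.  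Continuity, expansion, and bounded average degree are not used.

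\textbf{Lower bound via sprinkling.}  For the matching lower bound I would use a two-round coupling: write $G_n(p)\stackrel{d}{=} G_n(p-\delta)\cup H$ with $H\sim G_n(\beta)$ independent and $\beta=\delta/(1-p+\delta)$.  Given $\epsilon>0$, use continuity of $\zeta$ at $p$ to choose $\delta>0$ so small that $\zeta(p-\delta)\ge\zeta(p)-\epsilon/4$.  Now rerun the local-limit computation in the \emph{opposite} direction: $\mathbf{1}[|C(o)\cap B_k(o)|\ge R]\le \mathbf{1}[|C(o)|\ge R]$ is a bounded local function, so for $k$ and $R$ large enough Lemma~\ref{lm: local function - percolated graph} combined with monotone convergence as $k,R\to\infty$ shows that the number $N_R$ of vertices lying in a $G_n(p-\delta)$-component of size $\ge R$ satisfies $N_R\ge(\zeta(p)-\epsilon/2)n$ with high probability.

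\textbf{Merging large clusters.}  Let $\mathcal S$ be the family of components of $G_n(p-\delta)$ of size $\ge R$, so $|\bigcup\mathcal S|=N_R$.  Condition on $G_n$ (on the probability-one event that it is a suitable large-set expander) and on $G_n(p-\delta)$ (on the event $N_R\ge(\zeta(p)-\epsilon/2)n$).  The Sprinkling Lemma~\ref{lem: sprinkling} applied with expansion parameter $\epsilon/4$ and the independent sprinkling $H$ bounds by $e^{n/R-cn}$ the probability that there exist disjoint $A,B\subset\mathcal S$ each covering at least $(\epsilon/4)n$ vertices with no $A$-$B$ path in $H$; taking $R$ large enough that $1/R<c/2$ makes this $e^{-\Omega(n)}$.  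On the good event, a standard greedy partitioning argument shows that in the meta-graph on $\mathcal S$ whose edges are $H$-connections (and thus present in $G_n(p)\supseteq H$), one meta-component must absorb all but at most $(\epsilon/4)n$ of the vertices of $\bigcup\mathcal S$, yielding a component of $G_n(p)$ of size at least $N_R-(\epsilon/4)n\ge(\zeta(p)-\epsilon)n$.

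\textbf{Main obstacle.}  Sprinkling and the merging step are classical once the $p-\delta$ estimate is in place.  The delicate point is reconciling the two layers of randomness (random $G_n$ and independent percolation): the local-limit conclusion of Lemma~\ref{lm: local function - percolated graph}, the large-set expansion of $G_n$, and the sprinkling bound must all be invoked on a single event of probability tending to one, and the greedy partitioning step must then be performed conditionally on $G_n$ and on $G_n(p-\delta)$.  This is precisely where the quenched (``in probability'') notion of local convergence is essential, as convergence in distribution alone would not let one invoke the expansion of $G_n$ simultaneously with the local-limit statement.
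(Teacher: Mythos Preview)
Your proof is correct and follows essentially the same route as the paper's: the upper bound via the local indicator $\mathbf 1\{|C(o)|\ge K\}$ and Lemma~\ref{lm: local function - percolated graph}, then the lower bound by continuity of $\zeta$ at $p$, the local-limit count of vertices in large clusters at level $p-\delta$, and the Sprinkling Lemma~\ref{lem: sprinkling} to merge them. The only cosmetic differences are that the paper uses stochastic domination $(1-p)\ge(1-p')(1-\epsilon')$ rather than your exact two-round coupling, and that large-set expansion holds with probability tending to $1$ rather than on a ``probability-one event'' as you write---neither affects the argument.
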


{Before giving the proof of the proposition, we remark that an analogue of this statement for the case where the limit is a regular tree 
was established in 
\cite{Krivelevich20HighGirthExapnder}, using again a sprinkling argument, combined with branching process techniques (which do not apply here). }

\begin{proof}
We begin by proving  
{the upper bound on $|C_1|$.  Note that this part of the proof will hold} for any sequence of
locally convergent graphs (without the 
assumption of expansion {or bounded average degrees}). 
Fix $k\geq 1$. Define $f_k(G_n(p),v)=\mathbbm {1}\big(|B_k(G_n(p),v)|\geq k\big),$ as the indicator that $v$ is in a component of size at least $k$ in $G_n(p)$. Let  $Z_{\geq k}(G_n(p))=\mathbb{E}_{\mathcal{P}_n}[f_k(G_n(p),v)]$  be the fraction of vertices in $G_n(p)$ that are in a component of size at least $k$.  Then by 
Lemma~\ref{lm: local function - percolated graph}
\begin{equation}\label{Zk-conv}
Z_{\geq k}(G_n(p))\overset{\mathbb{P}}{\to}\zeta_k(p),
\end{equation}
where $\zeta_k(p)=\mu_p(|C(o)|\geq k)$.
Note that $\zeta(p)=\lim_{k\rightarrow \infty}\zeta_{k}(p)$. Suppose that $k$ is large enough that $|\zeta_k(p)-\zeta(p)|\leq \epsilon/2$.  
{Then the desired upper bound}
will be proved once we prove the following,
\[\mathbb{P}(\frac{|C_1|}{n}\leq \zeta_k(p)+\epsilon/2)\rightarrow 1.
\] 
We consider two cases: $\zeta_k(p)>0$ and $\zeta_k(p)=0$. If $\zeta_k(p)>0$,  then
$n\zeta_k(p)\geq k$ for large enough $n$. { But then
$\frac{|C_1|}{n}>\zeta_k(p)+\epsilon/2$ implies $|C_1|\geq k$ which in turn implies
$nZ_{\geq  k}\geq |C_1|$.  Therefore,}
\[\mathbb{P}\Big(\frac{|C_1|}{n}>\zeta_k(p)+\epsilon/2\Big)
\leq \mathbb{P}({Z_{\geq k}(G_n(p))}>\zeta_k(p)+\epsilon/2),\]
{for $n$ large enough} which implies
 \[\mathbb{P}(\frac{|C_1|}{n}\geq \zeta_k(p)+\epsilon/2)\rightarrow 0, \,\, \text{ as }n\rightarrow \infty.
\] 
If $\zeta_k(p)=0$ then  $Z_{\geq k}\rightarrow 0$ {in probability}. For the event that $Z_{\geq k}=0$, we have $|C_1|\leq k<\epsilon n/2$ if $n$ is large enough.  Therefore, if $\zeta_k(p)=0$ we get 
\[\mathbb{P}(\frac{|C_1|}{n}\geq \epsilon/2)
\leq \mathbb{P}(Z_{\geq k}>0)
\rightarrow 0, \,\, \text{ as }n\rightarrow \infty.
\] 
The two cases $\zeta_k(p)=0$ and $\zeta_k(p)>0$  give
{the desired upper bound on $|C_1|$.}
Note that this part also implies that if $\zeta(p)=0$ then $\frac{|C_1|}{n}\rightarrow 0$ {in probability}.

{To prove the lower bound, we may assume without loss of generality that $\zeta(p)>0$, i.e.,
we may assume that 
$p>p_c=\inf_p\{\zeta(p)>0\}$. 
Recalling the definition of large-set expanders, choose $\alpha>0$ such that for all
$0<\epsilon<1/2$  $G_n$ is an $(\alpha,\epsilon/8,{\bar d})$-large-set expander {with probability tending to $1$ as $n\to\infty$.}  Choose
$\epsilon$ small enough to make sure that
$\zeta(p)\geq 3\epsilon/2$, and} let $\delta>0$ be such that for $p-\delta<p'<p$, $\zeta(p)-\zeta(p')\leq \epsilon/2$ (implying in particular that $\zeta(p')\geq\epsilon$).
Choose $p'\in((p-\delta\lor p_c),p)$ and $\epsilon'>0$ such that $1-p\geq(1-p')(1-\epsilon')$.

{We will  use Lemma \ref{lem: sprinkling}
to show that, {with high probability,}
after raising $p'$ to $p$, most of the vertices in
large components in $G_n(p')$ will merge into
one giant component in $G_n(p)$.
}

Let $\mathcal{S}_k$ be the set of 
components of size greater than $k$ in $G_n(p')$. 
{Since $\zeta_k(p')\geq \zeta(p')\geq \zeta(p)-\epsilon/2$
we may use \eqref{Zk-conv}
to concluded that  
for all $k$, with high probability the total number of vertices in the sets in $\mathcal{S}_k$ is at least $\epsilon n$.  By our choice of $\epsilon'$, ``sprinkling'' edges with probability $\epsilon'$ on top of percolation
with probability $p'$ will give a percolated graph which is stochastically bounded by $G_n(p)$. Thus, by}
Lemma \ref{lem: sprinkling},  
{we see} that if $k$ is large enough,  with high probability all but $\epsilon n/8$ vertices that are in a component of size at least $k$ in $G_n(p')$ are in $C_1(G_n(p))$. So, there exist $K_1$ such that for all $k>K_1$ and  large enough $n$,
\[ 
\mathbb{P}\left(
\frac{|C_1(G_n(p))|}{n}\geq {Z_{\geq k}(G_n(p'))}-\frac{\epsilon}{8}\right)\rightarrow 1.\]
{Combined with \eqref{Zk-conv}, this shows that
there exists a constant $K_2$ such that for $k\geq K_2$,}
\[ \mathbb{P}\left(
\frac{|C_1(G_n(p))|}{n}\geq \zeta_k(p')-\frac{\epsilon}{4}\right)\rightarrow 1.\]
Since $\lim_{k\rightarrow \infty}\zeta_k(p'){=}\zeta(p')$, 
{we get that there exists a constant}
$K_3$ such that for and $k\geq K_3$
\[ \mathbb{P}\left(
\frac{|C_1(G_n(p))|}{n}\geq \zeta(p')-\frac{\epsilon}{2}\right)\rightarrow 1.\]
By the choice of $\delta$ and $p'$, $\zeta(p')\geq \zeta(p)-\epsilon/2$. Hence, we get 
desired lower bound on $|C_1|$:
\[\mathbb{P}\left(
\frac{|C_1(G_n(p))|}{n}\geq \zeta(p)-\epsilon\right)\rightarrow 1.\]
\end{proof}

 \newproof{
 Our  next lemma generalizes Theorem 2.1 in \cite{alon2004}, replacing an assumption of bounded degree expanders by the assumption of large set expansion plus 
  a tightness bound on the largest degree, $\Delta_R(G_n,o_n)$,
 in a ball of radius  $R$
 around a random root $o_n$ in $G_n$.  Specifically, we will assume that  \begin{equation}\label{Delta-tightness}
\forall R<\infty\quad
\limsup_{n\to\infty}\mathbb P(\Delta_R(G_n,o_n)\geq \Delta)\to 0\quad\text{as}\quad\Delta\to\infty,
 \end{equation}
where the probability is with respect to a random root $o_n$ and the randomness of $G_n$.}
 
\begin{lemma}\label{lem: uniform convergence second component}
Let $\{G_n\}$ be a sequence 
\newproof{of bounded average degree large-set expanders obeying the tightness condition
\eqref{Delta-tightness}},
let  $0<q<1/2$ and $c>0$ be arbitrary.
Then for any $\epsilon>0$ there exists $N_{\epsilon,q,c}$  such that for all $n>N_{ \epsilon,q,c}$ and all $p\in[q,1-q]$
\[\mathbb{P}(\frac{|C_2|}{n}\geq c )\leq \epsilon\]
{where $\mathbb P$ denotes probabilities with respect to both the randomness of $G_n$ and percolation.}
\end{lemma}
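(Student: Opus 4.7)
The plan is to adapt the sprinkling argument of Alon–Benjamini–Stacey \cite{alon2004}, accommodating bounded average degree (as opposed to bounded maximum degree) via the tightness condition \eqref{Delta-tightness}. Fix $q,c,\epsilon>0$ and $p\in[q,1-q]$. I would set $p_1=q/2$ and $\delta=(p-p_1)/(1-p_1)$, so that $\delta$ is bounded uniformly away from $0$ and $1$ as $p$ varies over $[q,1-q]$. Using the standard two-round coupling, I would write $G_n(p)\stackrel{d}{=}G_n(p_1)\cup H$ with $H$ an independent copy of $G_n(\delta)$, and condition on $G_n$ throughout.

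Let $\mathcal{S}$ denote the family of connected components of $G_n(p_1)$ of size at least $R$, where $R$ is a constant to be chosen. Suppose the bad event $\{|C_2(G_n(p))|\geq cn\}$ occurs and let $C_1,C_2$ be the two largest components of $G_n(p)$; since each component of $G_n(p)$ is a union of components of $G_n(p_1)$, I split the analysis into two subcases. In subcase (a), both $|C_i\cap\bigcup\mathcal{S}|\geq (c/4)n$ for $i=1,2$. Then $A=\{V_j\in\mathcal{S}:V_j\subseteq C_1\}$ and $B=\{V_j\in\mathcal{S}:V_j\subseteq C_2\}$ are disjoint subfamilies of $\mathcal{S}$ with $|\bigcup A|,|\bigcup B|\geq (c/4)n$, and since $C_1,C_2$ are distinct components of $G_n(p)\supseteq G_n(p_1)\cup H$, there is no $A$--$B$ path in $H$. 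Lemma \ref{lem: sprinkling}, applied with $\epsilon=c/4$ and $\beta=\inf_{p\in[q,1-q]}\delta(p)>0$, bounds the probability of this by $\exp(n/R-c_{\mathrm{sp}}n)$ for a constant $c_{\mathrm{sp}}>0$ depending only on $c,q,\alpha,\bar d$; choosing $R>2/c_{\mathrm{sp}}$ makes this bound exponentially small, uniformly in $p$ and $n$.

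Subcase (b) is the main technical obstacle: at least one of $C_1,C_2$ has fewer than $(c/4)n$ vertices in big $G_n(p_1)$-components, forcing at least $(3c/4)n$ vertices of $V(G_n)$ to lie in $G_n(p_1)$-components of size less than $R$. This cannot be dismissed outright, since for $p_1$ below the percolation threshold for $\mu$ the bulk of $G_n(p_1)$ typically consists of small clusters. The plan for bounding the probability of subcase (b) combines three ingredients. First, large-set expansion alone forces $G_n$ itself to have one component of size $(1-o(1))n$ with probability tending to $1$: any union of complete components of total size in $[\epsilon n, n/2]$ would have zero edge-boundary in $G_n$, contradicting $\phi(G_n,\epsilon)\geq\alpha$. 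Second, the tightness assumption \eqref{Delta-tightness} lets one restrict to the subset $V_*\subseteq V(G_n)$ of vertices with $|B_R(v)|\leq\Delta$, whose complement has density that can be made arbitrarily small uniformly in $n$ by taking $\Delta$ large. Third, iterate the sprinkling over a finite sequence $q/2=p^{(0)}<\cdots<p^{(K)}=p$ with spacings bounded below: because vertices of $V_*$ behave as in a bounded-degree graph, the standard Alon--Benjamini--Stacey counting bound on connected subgraphs with bounded edge-boundary applies, and each additional sprinkling stage reduces the mass of vertices of $V_*$ still in small clusters by a multiplicative factor strictly less than $1$. After enough stages $K$, chosen in terms of $c,q,\alpha,\bar d,\Delta$, this mass drops below $(c/4)n$, ruling out subcase (b).

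The hardest part will be making this iterated decay of the small-cluster mass quantitative using only the tightness hypothesis, since tightness controls only the typical local geometry; one has to integrate the sprinkling argument against the small tail of exceptional vertices with large $R$-neighborhoods, a step that is trivial in the bounded-degree setting of \cite{alon2004} but requires care here.
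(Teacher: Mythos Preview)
Your subcase (a) is fine, but subcase (b) is a genuine gap, and the iterated-sprinkling plan you sketch does not close it. The key claim---that ``each additional sprinkling stage reduces the mass of vertices of $V_*$ still in small clusters by a multiplicative factor strictly less than $1$''---is simply false in general. Take $p$ well below the percolation threshold of the limit (the lemma must cover all $p\in[q,1-q]$, and the hypotheses do not even include local convergence, only tightness). Then at \emph{every} intermediate level $p^{(i)}\leq p$ the graph $G_n(p^{(i)})$ is subcritical, and essentially all vertices lie in components of bounded size; no amount of sprinkling changes this. The ``standard ABS counting bound on connected subgraphs with bounded edge-boundary'' you invoke does not yield a contraction of small-cluster mass either: large-set expansion says nothing about the edge-boundary of a set of size $<R$, and even in a bounded-degree graph there is no mechanism forcing small $G_n(p^{(i)})$-clusters to merge into clusters of size $\geq R$ at the next stage when $p^{(i+1)}$ is still subcritical. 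In short, sprinkling is a tool for showing that large pieces \emph{merge}; it cannot by itself rule out the existence of a second large component built from many small $G_n(p_1)$-pieces.

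The paper's proof avoids this difficulty by not using sprinkling at all for this lemma. It follows the pivotal-edge route of \cite{alon2004}: one first shows (their Lemma~2.3) that for a uniformly random edge $e$, the probability that $e$ joins two distinct components of size $\geq cn$ in $G_n(p)$ is $O(1/\sqrt{|E(G_n)|})$, uniformly in $p\in[q,1-q]$. Next, large-set expansion plus bounded average degree force the $r$-neighborhood of any set of size $\geq cn$ to cover $\geq \tfrac34 n$ vertices for a fixed $r=r(\bar d,\alpha,c)$; hence if two components of size $\geq cn$ coexist, at least $n/2$ vertices have both of them within distance $r$. Restricting to the set $V_{r,\Delta}$ of vertices whose $r$-ball has maximum degree $\leq\Delta$ (whose complement is small by the tightness hypothesis \eqref{Delta-tightness}), one uses a local rerandomization estimate (equation~(10) in \cite{alon2004}) to convert ``two large components within distance $r$ of $v$'' into ``some edge in $B_r(v)$ is pivotal'' at a cost depending only on $q,r,\Delta$. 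Summing over $v\in V_{r,\Delta}$ and comparing with the $O(1/\sqrt{|E|})$ bound on pivotal edges yields the result via Markov's inequality. This argument is insensitive to whether $p$ is sub- or supercritical, which is exactly where your approach breaks.
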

The proof of the lemma closely follows that of \cite{alon2004}, and is given in Appendix~\ref{app:C2}.

\begin{proof}[Proof of Theorem \ref{thm: size of giant in expander}]
The theorem follows immediately from 
Proposition \ref{prop: lower bound giant},
Corollary~\ref{cor: zeta-cont}, and Lemma~\ref{lem: uniform convergence second component} \newproof{and the fact that local convergence in probability implies tightness, which by
Theorem A.16 of \cite{RemcoVol2} implies \eqref{Delta-tightness}.}
\end{proof}

Next, we observe that
by Theorem \ref{thm: size of giant in expander}, the critical threshold is local for expanders with bounded average degree. 
\begin{corollary}\label{cor: critical prob} 
Let $G_n$ be a sequence of expanders with bounded average degree that converges locally in probability to $(G,o)\in \mathcal{G}_*$ with the law $\mu$. Then
 \[p_c(G_n)\overset{\mathbb{P}}{\to} p_c(\mu).\]
\end{corollary}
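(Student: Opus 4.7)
My plan is to reduce the corollary to Theorem~\ref{thm: size of giant in expander} together with the monotonicity of Bernoulli percolation in $p$.  The task splits into showing that the constant sequence $p_n=p_c(\mu)$ is itself a valid threshold sequence, and that any other threshold sequence must converge to $p_c(\mu)$.

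For the first task, fix $\epsilon>0$.  Since $p_c(\mu)-\epsilon<p_c(\mu)=\inf\{p:\zeta(p)>0\}$, we have $\zeta(p_c(\mu)-\epsilon)=0$, and Theorem~\ref{thm: size of giant in expander} applied at the point $p_c(\mu)-\epsilon$ (which differs from $p_c(\mu)$, as required) yields $|C_1(G_n(p_c(\mu)-\epsilon))|/n\overset{\mathbb P}{\to}0$, so the subcritical condition of the threshold definition is met for every $c>0$.  On the supercritical side, $\zeta(p_c(\mu)+\epsilon)>0$ by definition of the infimum, and Theorem~\ref{thm: size of giant in expander} gives $|C_1(G_n(p_c(\mu)+\epsilon))|/n\overset{\mathbb P}{\to}\zeta(p_c(\mu)+\epsilon)$, so choosing $c=\zeta(p_c(\mu)+\epsilon)/2>0$ produces a positive $c$ with the required probability tending to $1$.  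Thus $p_c(\mu)$ itself serves as a threshold sequence for $G_n$.

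For the uniqueness claim, suppose $p_n$ is a threshold sequence that does not converge to $p_c(\mu)$.  Passing to a subsequence (which inherits the threshold property) we may assume that either $p_n\geq p_c(\mu)+\epsilon$ for all $n$ or $p_n\leq p_c(\mu)-\epsilon$ for all $n$, for some fixed $\epsilon>0$.  In the former case let $p^\star=p_c(\mu)+\epsilon/2$, so that $p^\star\leq p_n-\epsilon/2$.  Theorem~\ref{thm: size of giant in expander} gives $|C_1(G_n(p^\star))|/n\overset{\mathbb P}{\to}\zeta(p^\star)>0$, and in the standard monotone coupling $G_n(p^\star)\subseteq G_n(p_n-\epsilon/2)$ we have $|C_1(G_n(p_n-\epsilon/2))|\geq|C_1(G_n(p^\star))|$, hence $\mathbb P(|C_1(G_n(p_n-\epsilon/2))|\geq \zeta(p^\star)n/2)\to 1$, contradicting the subcritical part of the threshold property of $p_n$.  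The other case is symmetric, using $p^\star=p_c(\mu)-\epsilon/2$, $\zeta(p^\star)=0$, and the supercritical part of the definition.  The only point requiring care is that Theorem~\ref{thm: size of giant in expander} needs $p\neq p_c(\mu)$, which is automatic since we evaluate only at $p_c(\mu)\pm\epsilon/2$; in particular no continuity of $\zeta$ at $p_c(\mu)$ is needed, so there is no genuine obstacle beyond this routine bookkeeping.
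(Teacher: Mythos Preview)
Your argument is correct and follows essentially the same route as the paper: both reduce the statement to Theorem~\ref{thm: size of giant in expander} evaluated at $p_c(\mu)\pm\epsilon$, with your version additionally making the existence/uniqueness split for threshold sequences explicit. One small boundary case you glossed over (and the paper handles explicitly) is $p_c(\mu)=0$: there the clipping $0\vee(p_c(\mu)-\epsilon)$ lands exactly at $p_c(\mu)$, so Theorem~\ref{thm: size of giant in expander} is not formally applicable, but the subcritical conclusion is trivial at $p=0$ anyway.
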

\begin{proof}
Given any $\epsilon>0$ and $p>p_c(\mu)+\epsilon$, since $\zeta(p)$ \newproof{is continuous}, we can apply Theorem \ref{thm: size of giant in expander} to get that $\frac{|C_1|}{n}>0$ in $G_n(p)$ for large enough $n$, and hence, \[\lim_{n\rightarrow \infty}\mathbb{P}(p_c(G_n)\geq p_c(\mu)+\epsilon)=0,\] for all $\epsilon>0$.

If  $p=(p_c(\mu)-\epsilon\vee 0)$,  we know that $\zeta(p)=0$. 
So, by  Theorem \ref{thm: size of giant in expander}, $\frac{C_1}{n}\overset{\mathbb P}{\to} 0$. 
Therefore, if $p_c(\mu)>0$
 \[\lim_{n\rightarrow \infty}\mathbb{P}(p_c(G_n)\leq p_c(\mu)-\epsilon)=0,\,\, \text{ for all }\epsilon>0.\] 
 Note that if $p_c(\mu)=0$, we already know that $p_c(G_n)\geq p_c(\mu)$ for all $n$. As a result of the above limits,
  \[\lim_{n\rightarrow \infty}\mathbb{P}(|p_c(G_n)- p_c(\mu)|\geq \epsilon)=0,\,\, \text{ for all }\epsilon>0.\] 
\end{proof}


\section{Coupling  Oriented and Unoriented Percolations}\label{sec: coupling}

In this section, we relate oriented and unoriented percolation for general graphs. In particular, we will compare the thresholds $p_c(G_n)$ for the appearance of a giant component in $G_n(p)$ to that of the appearance of a giant SCC in $D_{G_n}(p)$ introduced in Section \ref{sec: defs}, and show that they are asymptotically the same if both exist (Section~\ref{sec:threshold-coupling}).
Next, in Section~\ref{sec: coupling unique giant}, we analyze oriented percolation when  the  giant component in the unoriented case is 
unique, and show that under this assumption, the linear-sized SCC  is unique if it exists.

{Throughout Section~\ref{sec: coupling},}
we {make} no assumptions on graph expansion or the existence of local  limit, and the result 
carry over to general graphs.

\subsection{Comparing unoriented and oriented thresholds}\label{sec:threshold-coupling}

The following lemma introduces a coupling between oriented and unoriented percolation. The second part couples non-overlapping fan-outs of two vertices to the undirected components of those vertices.
To state the lemma, we need the following notation: Given a node $u$ in $G$ and a subset $S\subset V(G)$, we define $C_{\setminus S}^+(u)$ as the fan-out of $u$ in the induced {digraph}  on the complement of $S$.  If $u\in S$, then  $C_{\setminus S}^+(u)=\emptyset$.

\begin{lemma} \label{lem: coupling}
Let $G$ be a graph on $n$ nodes, and let $p\in(0,1)$. Then,
\begin{enumerate}
    \item For $v$ in $G$ and $\alpha>0$,
\[\mathbb{P}_{D_G(p)}(|C^+(v)|\geq \alpha n)=\mathbb{P}_{G(p)}(|C(v)|\geq \alpha n)=\mathbb{P}_{D_G(p)}(|C^-(v)|\geq \alpha n).\]
\item Let $u$ and $v$ be vertices in {$G$},
and let $k_1$ and $k_2$ be positive integers,
\begin{align*}
\mathbb{P}_{D_G(p)}\big(|C^+(v)|\geq k_1, |C^+_{ \setminus C^+(v)}(u)|\geq k_2\big)
&= \mathbb{P}_{G(p)}\big(|C(v)|\geq k_1, |C(u)| \geq k_2, C(v)\neq C(u)\big)\\
&{=\mathbb{P}_{D_G(p)}\big(|C^+(v)|\geq k_1, |C^-_{ \setminus C^+(v)}(u)|\geq k_2\big).}
\end{align*}
\end{enumerate}
\end{lemma}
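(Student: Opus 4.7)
The plan is to couple the unoriented and oriented percolations via a shared breadth-first search (BFS) exploration from $v$. For Part 1, I would run BFS from $v$ simultaneously in $G(p)$ and in $D_G(p)$: whenever a currently explored vertex $x$ inspects a neighbor $y$ of $x$ in $G$ that has not yet been reached, we consult a single Bernoulli$(p)$ variable, interpreting it as the status of the undirected edge $\{x,y\}$ in $G(p)$ or of the directed edge $xy$ in $D_G(p)$. Since the BFS algorithm queries the same ordered list of potential edges in both models, the set of reached vertices---which equals $C(v)$ in the undirected case and $C^+(v)$ in the directed case---coincides under this coupling. In particular $C(v)$ and $C^+(v)$ are equal in distribution as random subsets of $V(G)$, which gives the first equality in Part 1. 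The equality with $\mathbb{P}_{D_G(p)}(|C^-(v)|\ge \alpha n)$ then follows from the reversal invariance of $D_G(p)$: flipping every directed edge leaves the law of $D_G(p)$ unchanged while swapping the roles of $C^+(v)$ and $C^-(v)$.

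For Part 2, the key observation is that the event $\{C^+(v)=S\}$ depends only on the directed edges with tail in $S$: it requires every such edge from $S$ to $V\setminus S$ to be closed and requires the directed edges within $S$ to allow $v$ to reach every vertex of $S$. Hence every directed edge with tail in $V\setminus S$ remains i.i.d.\ Bernoulli$(p)$ conditionally on this event. Analogously, the event $\{C(v)=S\}$ in $G(p)$ depends only on edges incident to $S$, so every edge with both endpoints in $V\setminus S$ remains i.i.d.\ Bernoulli$(p)$. Therefore, conditional on $\{C^+(v)=S\}$ (resp.\ $\{C(v)=S\}$) with $u\notin S$, the random variable $C^+_{\setminus S}(u)$ (resp.\ $C^-_{\setminus S}(u)$, resp.\ $C(u)$) is distributed as $C^+(u)$ (resp.\ $C^-(u)$, resp.\ $C(u)$) in the independently percolated induced subgraph $D_{G[V\setminus S]}(p)$ (resp.\ $G[V\setminus S](p)$).

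I would then apply Part 1 to the subgraph $G[V\setminus S]$ and vertex $u$ to conclude that all three conditional tail probabilities $\mathbb{P}(|\cdot|\ge k_2)$ coincide. Combining this with the marginal identity $\mathbb{P}_{D_G(p)}(C^+(v)=S)=\mathbb{P}_{G(p)}(C(v)=S)$ from Part 1 and summing over sets $S$ with $|S|\ge k_1$ and $u\notin S$ yields Part 2. On the undirected side, the condition $C(v)\ne C(u)$ is equivalent to $u\notin C(v)$, which matches exactly the condition $u\notin C^+(v)$ required for $C^+_{\setminus C^+(v)}(u)$ (or $C^-_{\setminus C^+(v)}(u)$) to be nonempty on the directed side.

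The main obstacle will be cleanly justifying the separation of ``revealed'' and ``fresh'' randomness between the two stages of exploration. In particular, for the directed case one must verify that the forward exploration from $v$ inspects no directed edge with tail outside $C^+(v)$, so that \emph{both} a subsequent forward BFS from $u$ (revealing $C^+_{\setminus C^+(v)}(u)$) and a subsequent backward BFS from $u$ (revealing $C^-_{\setminus C^+(v)}(u)$) genuinely operate on an independent, fully fresh copy of directed percolation on $G[V\setminus C^+(v)]$.
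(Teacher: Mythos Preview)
Your proposal is correct and rests on the same BFS-exploration coupling as the paper. The only difference is organizational: the paper constructs a single joint coupling over pairs of BFS trees $(T_1,T_2)$ and recovers Part~1 as a degenerate case of Part~2, whereas you prove Part~1 directly and then reduce Part~2 to Part~1 on the induced subgraph $G[V\setminus S]$ via the conditional-independence observation that $\{C^+(v)=S\}$ is measurable with respect to edges with tail in $S$; both packagings express the same underlying fact that the forward exploration from $v$ leaves all edges within $V\setminus C^+(v)$ fresh.
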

\begin{proof}
\begin{enumerate}
\newproof{    \item  This part is a special case of part 2. To see this, one can add a dummy isolated node $u$ and let $k_2=0$. }
 \item 
We will prove the first equality and during the proof we will point out how it can be extended to prove the second inequality.
For any vertex $w$, define $T(w)$ and $T^+(w)$ as the tree rooted at $w$ obtained by breadth-first  exploration of $C(w)$ and $C^+(w)$, respectively.
Also, define $T^+(u\setminus v)$ as the breadth-first exploration of $C^+_{\setminus C^+(v)}(u)$. Finally, given a tree $T\subset G$ and a root $v$ in $T$, define
the corresponding oriented graph $T^+$  by directing edges away from the root.  

Consider now two arbitrary trees $T_1$ and $T_2$ rooted at $v$ and $u$, respectively.
If they intersect, the probability that $T^+(v)=T_1^+$ and $T^+(u\setminus v)=T_2^+$ 
is zero, and so is the probability that $T(v)=T_1$, $T(u)=T_2$ and $C(v)\neq C(u)$.  It they are disjoint, we will  define a coupling of
$G(p)$ and $D_G(p)$ which shows that
\[\mathbb{P}_{D_G(p)}\big(T^+(v)=T_1^+,T^+(u\setminus v)=T_2^+\big)=\mathbb{P}_{G(p)}\big(T(v)=T_1,T(u)=T_2\big).\]
Define $l_1(w)$ and $l_2(w)$ to be the distance of a vertex $w$ from the root in $T_1$ and $T_2$, respectively (with the root having level $0$ and nodes that are not in the tree having level $\infty$). We express the instances of $D_G(p)$ by choosing, for each edge $\{x,y\}\in E(G)$, two Bernoulli random variables $X_{x,y}$ and $X_{y,x}$, so that $X_{x,y}$ is $1$ if and only if the directed edge from $x$ to $y$ exists in $D_G(p)$ and is $0$ otherwise.  Similarly, let $Y_{x,y}$ be a Bernoulli random variable corresponding to the existence of an undirected edge between $x$ and $y$ in $G(p)$. 

To define the coupling, first consider the case that  $\{x,y\}\in E(G)$ and $l_1(x)<l_1(y)$: if the edge $(x,y)$ does not exists in $T$, let $X_{x,y}=0=Y_{x,y}$.
If $x$ is the successor of $y$ in $T_1$ let $X_{y,x}=1=Y_{y,x}$.  Note in particular that
the events $T^+(v)=T_1^+$ and $T(v)=T_1$ happen only if we have
set $X_{x,y}=Y_{x,y}=0$ whenever $x$ is a vertex in $T_1$ and $y$ is a vertex in $T_2$.

Next we couple the binary random variables for edges $\{x,y\}\in E(G)$ such that $l_2(x)<l_2(y)$
and $x,y\notin V(T_1)$.  Since the event $T^+(u\setminus v)=T_2$ involves only edges with
both endpoints in $V(G)\setminus V(T_1)$, the edges coupled in the second step determine whether
$T^+(u\setminus v)=T_2$ or not.  On the other hand, the event $T(u)=T_2$ \emph{does} involve edges between
the vertices in $T_1$ and $T_2$, namely, it requires that $Y_{x,y}=0$ if $\{x,y\}$ is an edge pointing from
$T_1$ to $T_2$.  But as remarked before, these edges have already been set in our first coupling step,
and have been set in such a way that if $T(v)=T_1$ {then} all these edges are absent in $G(p)$, as required.
Setting finally all remaining edges independently, we obtain a coupling such that
the events {in $D_G(p)$} happen if and only the corresponding events {in $G(p)$} happen, and with the same probability.

This completes the proof of the first {identity} in the lemma.  The second one is essentially the same,
except that in the second step, we orient all edges in $D_G(p)$ in the opposite direction.
\end{enumerate}
\end{proof}

Lemma \ref{lem: coupling} immediately gives the following corollary, which in particular shows that the existence of a giant SCC in $D_G(p)$ implies the existence of a giant component in $G(p)$.
 
 \begin{corollary}\label{cor:pc-lower-bd}
 Given a (possibly random) graph $G$  
 on $n$ nodes, $p\in [0,1]$ and $\alpha>0$, we have that
 \begin{align*}
    \mathbb P_{D_G(p)}&(|SCC_1|\geq \alpha n)\\
     &\leq  \mathbb P_{D_G(p)}(\text{there exists $\geq \alpha n$ vertices $v$ with }|C^+(v)|\geq \alpha n)\\
    & \qquad\qquad \leq\frac 1\alpha   \mathbb P_{G(p)}(|C_1|\geq \alpha n),
 \end{align*}
 where the probability $\mathbb P$ is first over the randomness of $G$ and then oriented/unoriented percolation.
 \end{corollary}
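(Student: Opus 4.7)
The plan is to prove the two inequalities separately. The first one is essentially a definitional observation, while the second one follows from Markov's inequality combined with part 1 of Lemma \ref{lem: coupling}.

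For the first inequality, I would argue as follows. Every vertex $v \in SCC_1$ satisfies $SCC_1 \subseteq C^+(v)$, because from any vertex in $SCC_1$ one can reach every other vertex in $SCC_1$ by definition of a strongly connected component. Hence on the event $\{|SCC_1| \geq \alpha n\}$, all $|SCC_1| \geq \alpha n$ vertices of $SCC_1$ automatically satisfy $|C^+(v)| \geq \alpha n$, and the first inequality follows immediately. This step is essentially trivial and requires no further work.

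For the second inequality, let $N = |\{v \in V(G) : |C^+(v)| \geq \alpha n\}|$ in $D_G(p)$, and similarly $M = |\{v \in V(G) : |C(v)| \geq \alpha n\}|$ in $G(p)$. By Markov's inequality applied conditionally on $G$,
\[
\mathbb{P}_{D_G(p)}(N \geq \alpha n \mid G) \leq \frac{\mathbb{E}_{D_G(p)}[N \mid G]}{\alpha n}.
\]
Using the linearity of expectation and part 1 of Lemma \ref{lem: coupling} (applied to each vertex individually on the fixed graph $G$), one has $\mathbb{E}_{D_G(p)}[N \mid G] = \sum_{v} \mathbb P_{D_G(p)}(|C^+(v)|\geq \alpha n \mid G) = \sum_v \mathbb P_{G(p)}(|C(v)| \geq \alpha n \mid G) = \mathbb E_{G(p)}[M \mid G]$. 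Finally, since $M \leq n \cdot \mathbbm{1}(|C_1| \geq \alpha n)$ pointwise (any vertex in a component of size $\geq \alpha n$ forces $|C_1| \geq \alpha n$), we get $\mathbb E_{G(p)}[M \mid G] \leq n \cdot \mathbb P_{G(p)}(|C_1| \geq \alpha n \mid G)$. Chaining these and then taking expectations over the randomness of $G$ yields the desired bound.

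There are no real obstacles here: the identity from Lemma \ref{lem: coupling} gives equality of the per-vertex probabilities for large fan-out and large undirected component, so the expected number of ``heavy'' vertices is the same in both models, and the Markov step converts this into a bound on the probability of having many heavy vertices. The bookkeeping of the two sources of randomness (the graph $G$ and the percolation) is handled simply by conditioning on $G$ throughout and then averaging.
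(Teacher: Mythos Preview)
Your proof is correct and follows essentially the same approach as the paper: the first inequality is the observation that vertices in $SCC_1$ have large fan-out, and the second combines Markov's inequality, the per-vertex identity from Lemma~\ref{lem: coupling}, and the pointwise bound $M\leq n\cdot \mathbbm 1(|C_1|\geq \alpha n)$. The paper phrases the last step slightly differently (noting that $Z_{\geq k}$ is either $0$ or at least $k$, then bounding $\mathbb E[Z_{\geq k}\mathbbm 1_{Z_{\geq k}\geq k}]\leq n\mathbb P(Z_{\geq k}\geq k)$), but this is the same observation.
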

 
 \begin{proof}
We {first prove} the result for a non-random graph $G$; {it can} then 
be generalized to  random graphs $G$ by conditioning on 
the random graph {instance},
and then taking a weighted average over all possible 
{instances}.

 Let $k=\lceil \alpha n\rceil$. In $D_G(p)$,
 if $|SCC_1|\geq k$, then there are at least
 $k$ vertices $v$ with $|SSC(v)|\geq k$, which implies that there are at least $k$ vertices with fan-out at least $k$, proving the first inequality.   Next define
$Z_{\geq k}$ as the number of vertices such that $|C(v)|\geq k$, and define $Z^+_{\geq k}$ to be the number of vertices such that $|C^+(v)|\geq k$.
Using first Lemma \ref{lem: coupling}, and then the fact that either $Z_{k}=0$ or
$k\leq Z_{\geq k}\leq n$, we  have,
\begin{align*}
   \mathbb P&(\text{there exists $\geq \alpha$ vertices $v$ with }|C^+(v)|\geq \alpha n)
=\mathbb P(Z_{\geq k}^+\geq k)
\leq \frac 1{k}\mathbb E[Z_{\geq k}^+]=\\
&=
\frac 1{k}\mathbb E[Z_{\geq k}]
=\frac 1{k}\mathbb E[Z_{\geq k} 1_{Z_{\geq k}\geq k}]
\leq \frac nk \mathbb P(Z_{\geq k}\geq k)=\frac nk \mathbb P(|C_1|\geq k).
\end{align*}
Since $n/k\leq 1/\alpha$, this proves the corollary.
\end{proof}

The next lemma  gives a bound in the opposite direction.
Recall that the strongly connected component of a vertex $v$,
$SSC(v)$, is the  intersection of $C_n^+(v)$ and $C_n^-(v)$.
Also, recall that $C_i$ and $SCC_i$ denote the $i^\text{th}$ largest component/strongly connected component in $G(p)$
and $D_G(p)$, respectively.

\begin{lemma}\label{lm: expected SCC}
Given a graph $G$ and a constant $p\in[0,1]$,
\[
\frac 1n \mathbb E_{D_G(p)}[|SCC_1|]\geq 
\frac 1{n^2}\sum_i\mathbb E_{D_G(p)}[|SCC_i|^2]
\geq \Big(\frac 1n\mathbb E_{G(p)}[|C_1|]\Big)^4.
\]
\end{lemma}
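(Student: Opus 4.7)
The plan is to establish the two inequalities separately. The first is a pointwise size estimate, while the second combines the coupling of Lemma \ref{lem: coupling} with the FKG (Harris) inequality and a second-moment inequality.

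For the first inequality, I will use the fact that $\{SCC_i\}_i$ partitions $V(G)$ and that $|SCC_i| \leq |SCC_1|$ for every $i$. This yields
\[
\sum_i |SCC_i|^2 \leq |SCC_1| \sum_i |SCC_i| = n\,|SCC_1|
\]
pointwise on each realization of $D_G(p)$, and the inequality follows after taking expectations and dividing by $n^2$.

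For the second inequality, I will first rewrite the sum of squares as a pair count,
\[
\sum_i |SCC_i|^2 = \sum_{u,v \in V(G)} \mathbbm{1}[v \in C^+(u)]\,\mathbbm{1}[v \in C^-(u)],
\]
and observe that both indicators are increasing functions of the directed edge configuration in $D_G(p)$. The FKG inequality for the product Bernoulli measure on oriented edges therefore gives, for every fixed $u,v$,
\[
\mathbb P_{D_G(p)}\bigl(v \in C^+(u),\ v \in C^-(u)\bigr) \geq \mathbb P_{D_G(p)}\bigl(v \in C^+(u)\bigr)\,\mathbb P_{D_G(p)}\bigl(v \in C^-(u)\bigr).
\]
The BFS coupling underlying Lemma \ref{lem: coupling} yields the per-vertex marginal identity $\mathbb P_{D_G(p)}(v \in C^+(u)) = \mathbb P_{D_G(p)}(v \in C^-(u)) = \mathbb P_{G(p)}(v \in C(u))$, so, setting $a_{u,v} := \mathbb P_{G(p)}(v \in C(u))$, summing over $u,v$ gives
\[
\mathbb E_{D_G(p)}\Bigl[\sum_i |SCC_i|^2\Bigr] \geq \sum_{u,v} a_{u,v}^2.
\]

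To conclude, Cauchy--Schwarz yields $\sum_{u,v} a_{u,v}^2 \geq \frac{1}{n^2}\bigl(\sum_{u,v} a_{u,v}\bigr)^2$. Then
\[
\sum_{u,v} a_{u,v} = \mathbb E_{G(p)}\Bigl[\sum_i |C_i|^2\Bigr] \geq \mathbb E_{G(p)}\bigl[|C_1|^2\bigr] \geq \bigl(\mathbb E_{G(p)}[|C_1|]\bigr)^2
\]
by the partition identity $\sum_{u,v}\mathbbm{1}[v\in C(u)]=\sum_i|C_i|^2$, the trivial bound $\sum_i|C_i|^2\geq|C_1|^2$, and Jensen's inequality. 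Chaining these estimates and dividing by $n^2$ produces the claimed fourth power $(\mathbb E_{G(p)}[|C_1|]/n)^4$. The only step I anticipate requiring real justification beyond standard inequalities is the marginal identity between the oriented and unoriented cluster indicators; but this follows from the same BFS coupling used to prove Lemma \ref{lem: coupling}, once one notes that the tree $T^+(u)$ obtained by BFS from $u$ in $D_G(p)$ has the same distribution as the undirected BFS tree $T(u)$ in $G(p)$, so that the two events $\{v\in C^+(u)\}$ and $\{v\in C(u)\}$ have equal probability.
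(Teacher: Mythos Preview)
Your proof is correct and follows essentially the same route as the paper's: both use the pointwise bound $\sum_i|SCC_i|^2\leq n|SCC_1|$ for the first inequality, and for the second combine the partition identity $\sum_i|SCC_i|^2=\sum_{u,v}\mathbbm{1}[u\leftrightarrow v]$, FKG on the two increasing events, the BFS coupling behind Lemma~\ref{lem: coupling} to identify the one-way reachability probabilities with $\mathbb P_{G(p)}(v\in C(u))$, Cauchy--Schwarz, and finally $\sum_i|C_i|^2\geq |C_1|^2$ together with Jensen. The only cosmetic difference is that the paper phrases the FKG step as $\{u\in C^+(v)\}\cap\{v\in C^+(u)\}$ rather than your $\{v\in C^+(u)\}\cap\{v\in C^-(u)\}$, which is the same event.
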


\begin{proof}
First we note that for all vertices $u,v$ we have that
\begin{equation}\label{eq: fkg}
\mathbb{P}_{D_G(p)}(u\in C^+(v)\text{ and }v\in C^+(u))\geq\mathbb{P}_{D_G(p)}(u\in C^+(v)) \mathbb{P}_{D_G(p)}(v\in C^+(u)) \end{equation}
by the standard FKG inequality.  Indeed, let $D$ be the digraph obtained from $G$ by replacing every edge in $G$ by two oriented edges,
and let $\{0,1\}^D$ be the set of subgraphs of $D$, equipped with the natural partial order (with $D_1\leq D_2$ if each edge in $D_1$ is an edge in $D_2$).  Then the functions 
$\mathbf{1}(u\in C^+(v))$ and $\mathbf{1}(v\in C^+(u))$ are both increasing functions
on $\Omega$, so \eqref{eq: fkg} follows from the Harris inequality \cite{harris_1960}.

For any two vertices $u$ and $v$ define $q_{uv}$ to be the probability that $v\in C^+(u)$ in $D_G(p)$. 
 By the coupling {from the proof of} Lemma \ref{lem: coupling}, $q_{uv}$ is equal to the probability that $v\in C(u)$ in $G(p)$, which is also the same probability that $u\in C(v)$. Therefore, $q_{uv}=q_{vu}$.
Using that 
$
\mathbb{E}_{D_G(p)}|SCC(v)|=\sum_u
 \mathbb{P}_{D_G(p)}(u\in C^+(v)\text{ and }v\in C^+(u))
$, we therefore get 
\begin{align*}
 \frac 1{n^2}\sum_v\mathbb{E}_{D_G(p)}|SCC(v)|&\geq \frac 1{n^2}\sum_{u,v\in V(G)} q_{uv}^2
 \\
 &\geq \Big(\frac{1}{n^2}\sum_{u,v\in V(G)} q_{uv}\Big)^2
 =\Big(\frac{1}{n^2}\sum_v\mathbb{E}_{G(p)}|C(v)| \Big)^2
 \end{align*}
where the second inequality follows from  Cauchy--Schwarz.
As a consequence,
\begin{align*}\sum_i\mathbb{E}_{D_G(p)}&[{|SCC_i|^2}]
= \sum_v\mathbb{E}_{D_G(p)}[{|SCC(v)|}]
 \geq\frac 1{n^2}\Big(\sum_v\mathbb{E}_{G(p)}[|C(v)|]\Big)^2
 \\
 &=\frac 1{n^2}\Big(\sum_i\mathbb{E}_{G(p)}[|C_i|^2]\Big)^2
 \geq \frac 1{n^2}\big(\mathbb{E}_{G(p)}[|C_1|^2]\big)^2
 \geq \frac 1{n^2}\big(\mathbb{E}_{G(p)}[|C_1|]\big)^4.
\end{align*}
To
complete the proof, we note that $\sum_i|SCC_i|^2\leq |SCC_1|\sum_i|SCC_i|=n|SCC_1|$.
\end{proof}

\begin{corollary}\label{cor: SCC threshold general graph} 
Fix a (a possibly random) sequence of graphs  $\{G_n\}_{n\in \mathbb{N}}$. If  
 $p_n$ and $p_n^{SCC}$ are threshold {sequences} for the existence of a giant in $G_n(p)$ and the existence of a giant SCC in
 $D_{G_n}(p)$, respectively, then $|p_n-p_n^{SCC}|\to 0$ as $n\to 0$.
\end{corollary}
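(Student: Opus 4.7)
The plan is to argue by contradiction, leveraging exactly the two comparison bounds of this section: Corollary~\ref{cor:pc-lower-bd} converts an absence of an undirected giant into an absence of a giant $SCC$, while Lemma~\ref{lm: expected SCC} converts the presence of an undirected giant into a lower bound on the expected size of the giant $SCC$. Suppose $|p_n - p_n^{SCC}| \not\to 0$; after passing to a subsequence, there exists $\epsilon > 0$ such that either $p_n^{SCC} \geq p_n + \epsilon$ for all $n$, or $p_n^{SCC} \leq p_n - \epsilon$ for all $n$, and the two cases are handled separately.

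In the first case I set $p = p_n + \epsilon/2$, which simultaneously satisfies $p \geq p_n + \epsilon/2$ and $p \leq p_n^{SCC} - \epsilon/2$. The threshold property of $p_n$ supplies some $c > 0$ with $\mathbb{P}(|C_1(G_n(p))| \geq cn) \to 1$, and hence $\liminf_n n^{-1}\mathbb{E}[|C_1(G_n(p))|] \geq c$. Lemma~\ref{lm: expected SCC} (applied pointwise to each realisation of $G_n$ and then averaged, using Jensen's inequality for the convex function $x \mapsto x^4$ on $[0,1]$) yields $\liminf_n n^{-1}\mathbb{E}[|SCC_1(D_{G_n}(p))|] \geq c^4$; since $|SCC_1| \leq n$, a one-line Paley--Zygmund bound then gives $\liminf_n \mathbb{P}(|SCC_1(D_{G_n}(p))| \geq c^4 n/2) \geq c^4/2 > 0$. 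This contradicts the threshold property of $p_n^{SCC}$, which, together with monotonicity of $|SCC_1|$ in $p$ under the standard uniform coupling, forces $\mathbb{P}(|SCC_1(D_{G_n}(p))| \geq c'n) \to 0$ for every $c' > 0$.

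In the second case I set $p = p_n - \epsilon/2$, so that $p \leq p_n - \epsilon/2$ and $p \geq p_n^{SCC} + \epsilon/2$. The threshold property of $p_n$ gives $\mathbb{P}(|C_1(G_n(p))| \geq cn) \to 0$ for every $c > 0$, and Corollary~\ref{cor:pc-lower-bd} transports this directly to the oriented setting via $\mathbb{P}(|SCC_1(D_{G_n}(p))| \geq cn) \leq c^{-1}\,\mathbb{P}(|C_1(G_n(p))| \geq cn) \to 0$. This contradicts the threshold property of $p_n^{SCC}$, which (using monotonicity to compare $p$ with $p_n^{SCC} + \epsilon/2$) supplies some $c > 0$ with $\mathbb{P}(|SCC_1(D_{G_n}(p))| \geq cn) \to 1$.

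The only step that is not completely routine is passing from the expected-value lower bound supplied by Lemma~\ref{lm: expected SCC} to a genuine positive probability of seeing a linear-sized $SCC$, but this is immediate from $|SCC_1| \leq n$ via Paley--Zygmund. For random $G_n$, the convergence-in-probability form of the threshold definition and the joint (averaged) probability statements used above are equivalent by bounded convergence, so the argument applies verbatim after taking an extra expectation over $G_n$.
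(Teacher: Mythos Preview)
Your proposal is correct and takes essentially the same approach as the paper: both use Corollary~\ref{cor:pc-lower-bd} for one direction and Lemma~\ref{lm: expected SCC} plus Jensen's inequality for the other, converting the resulting expectation lower bound into a probability lower bound via the trivial $|SCC_1|\leq n$ (the paper writes this as $\mathbb{E}[|SCC_1|]\leq Cn+n\,\mathbb{P}(|SCC_1|\geq Cn)$ rather than invoking Paley--Zygmund, but it is the same first-moment bound). The only cosmetic difference is that you argue by contradiction along a subsequence, whereas the paper directly establishes $1\wedge(p_n^{SCC}+\epsilon)\geq 0\vee(p_n-\epsilon)$ and its reverse for all large $n$.
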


\begin{proof}
Fix $\epsilon>0$.
By Corollary~\ref{cor:pc-lower-bd} and the definition of a threshold sequence, we know that there exists a $c=c(\epsilon)>0$ and an $N<\infty$ such that 
\[
\mathbb{P}\Big(|SCC_1|\geq cn \text{ in }D_{G_n}(1\wedge (p_n^{SCC}+\epsilon))\Big)\geq \frac 34
\]
and
\[
\mathbb{P}\Big(|SCC_1|\geq cn \text{ in }D_{G_n}(0\vee (p_n-\epsilon))\Big)
\leq \frac 1c\mathbb{P}\Big(|C_1|\geq cn \text{ in }
{G_n}(0\vee (p_n-\epsilon))\Big)
\leq \frac 1{3}
\]
for all $n\geq N$, where the probabilities are first over the possible randomness of $G_n$ and then  percolation.
Since the size of $SCC_1$ is increasing in $p$, this immediately implies that 
$$
1\wedge (p_n^{SCC}+\epsilon)\geq 0\vee (p_n-\epsilon)
$$ for all $n\geq N$. 

To prove a matching bound in the other direction, let $c>0$
and $\tilde N$ be such that for $n\geq \tilde N$
\[
\mathbb{P}\Big(|C_1|\geq cn \text{ in }{G_n}(1\wedge (p_n+\epsilon))\Big)\geq \frac 34.
\]
Then $\mathbb E|C_1|\geq \frac{3cn}4$, where the expectation is over the possible randomness of $G_n$ and  percolation.
So by using Lemma~\ref{lm: expected SCC} for all possible instances of $G$ and Jensen's inequality,
$$
\frac 1n {\mathbb{E}_G}\mathbb E_{D_G(1\wedge (p_n+\epsilon))}[|SCC_1|]\geq 2C,
$$
for some $C>0$ that depends on $c$.  Since $\mathbb E[|SCC_1|]\leq Cn+n\mathbb P(|SCC_1|\geq Cn)$, we conclude that
$$
\mathbb P_{D_G(1\wedge (p_n+\epsilon))}\big(|SCC_1|\geq Cn\big)\geq C.
$$
Using this fact, we now can proceed as in the derivation of the lower bound on $p_n^{SCC}-p_n$ to show that for $n$ large enough,
$$
1\wedge (p_n+\epsilon)\geq 0\vee (p_n^{SCC}-\epsilon).
$$
Since $\epsilon>0$ was arbitrary, this bound together with the matching bound above implies that $|p_n-p_n^{SCC}|\to 0$ as $n\to\infty$.
\end{proof}

Recall the definition of $\zeta^{+-}(p)$ from \eqref{eq: zeta+-}. Similar to proof of Lemma~\ref{lm: expected SCC} we can give bounds on $\zeta^{+-}(p)$.
\begin{lemma}\label{lm: zeta+- > zeta^2}
Let $\mu$ be a probability distribution on $\mathcal{G_*}$,
{and let}
$p\in[0,1]$.
Then
\[\zeta(p)\geq \zeta^{+-}(p)\geq
{\zeta^2(p).}
\]
{As a consequence, $p_c(\mu)=p_c^{+-}(\mu).$}
\end{lemma}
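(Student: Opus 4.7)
The plan is to prove the two inequalities separately, each by a short argument that parallels ideas already used in Lemmas~\ref{lem: coupling} and~\ref{lm: expected SCC}.

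For the upper bound $\zeta^{+-}(p) \le \zeta(p)$, I would first observe that the coupling in part (1) of Lemma~\ref{lem: coupling} extends to any locally finite rooted graph $(G,o)$ by exhausting $G$ with an increasing sequence of finite subgraphs containing $o$, yielding
\[
\mathbb{P}_{D_G(p)}(|C^+(o)|=\infty)=\mathbb{P}_{G(p)}(|C(o)|=\infty).
\]
Since the event $\{|C^+(o)|=\infty \text{ and } |C^-(o)|=\infty\}$ is contained in $\{|C^+(o)|=\infty\}$, integrating against $\mu$ gives $\zeta^{+-}(p)\le \zeta(p)$ at once.

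For the lower bound $\zeta^{+-}(p)\ge \zeta^2(p)$, the key observation is that, regarded as functions of the oriented percolation configuration on $D_G$, both $\mathbf{1}\{|C^+(o)|=\infty\}$ and $\mathbf{1}\{|C^-(o)|=\infty\}$ are increasing in the partial order on $\{0,1\}^{E(D_G)}$. Hence the Harris/FKG inequality, applied fiberwise in $(G,o)\sim\mu$, yields
\[
\mathbb{P}_{D_G(p)}(|C^+(o)|=\infty,\,|C^-(o)|=\infty)
\;\ge\;
\mathbb{P}_{D_G(p)}(|C^+(o)|=\infty)\,\mathbb{P}_{D_G(p)}(|C^-(o)|=\infty).
\]
By the symmetric version of the coupling in Lemma~\ref{lem: coupling}(1), both factors on the right equal $\mathbb{P}_{G(p)}(|C(o)|=\infty)$, so the right-hand side equals $(\mathbb{P}_{G(p)}(|C(o)|=\infty))^2$. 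Taking expectation over $(G,o)\sim\mu$ and applying Jensen's inequality gives
\[
\zeta^{+-}(p)\;\ge\;\mathbb{E}_\mu\bigl[(\mathbb{P}_{G(p)}(|C(o)|=\infty))^2\bigr]\;\ge\;\bigl(\mathbb{E}_\mu[\mathbb{P}_{G(p)}(|C(o)|=\infty)]\bigr)^2\;=\;\zeta^2(p).
\]

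The equality of thresholds is then immediate: the inequality $\zeta^{+-}\le\zeta$ shows that whenever $\zeta^{+-}(p)>0$ we also have $\zeta(p)>0$, so $p_c(\mu)\le p_c^{+-}(\mu)$; conversely, $\zeta^{+-}\ge\zeta^2$ shows that if $\zeta(p)>0$ then $\zeta^{+-}(p)>0$, giving $p_c^{+-}(\mu)\le p_c(\mu)$. The only subtlety in the proof is justifying that the Lemma~\ref{lem: coupling} coupling and the FKG inequality carry over from the finite graphs used earlier to the possibly infinite limit graph $(G,o)$, but this is a routine finite-exhaustion/continuity-of-measure argument since the events $\{|C^\pm(o)|=\infty\}$ and $\{|C(o)|=\infty\}$ are monotone limits of cylinder events.
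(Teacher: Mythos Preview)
Your proof is correct and follows essentially the same approach as the paper's: the paper also uses the Harris/FKG inequality fiberwise for the lower bound, invokes the coupling of Lemma~\ref{lem: coupling} on the infinite limit graph to identify $\mathbb{P}_{D_G(p)}(|C^\pm(o)|=\infty)$ with $\mathbb{P}_{G(p)}(|C(o)|=\infty)$, and then applies Cauchy--Schwarz (equivalently, your Jensen step) after integrating over $\mu$. The only cosmetic difference is that you make the finite-exhaustion justification for the infinite-graph coupling explicit, whereas the paper simply asserts it.
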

\begin{proof}
To prove the lower bound,
consider a graph $(G,o)$ drawn from the distribution $\mu$. 
Since $\mathbf{1}(|C^+(o)|=\infty)$ and $\mathbf{1}(|C^-(o)|=\infty)$ are increasing functions of edges, by the FKG inequality 
\[\mathbb P_{D_{G}(p)}\Big(|C^+(o)|=\infty,|C^-(o)|=\infty\Big)\geq \mathbb P_{D_{G}(p)}\Big(|C^+(o)|=\infty\Big)\mathbb P_{D_{G}(p)}\Big(|C^-(o)|=\infty\Big).\]
Similar to the coupling of Lemma~\ref{lem: coupling} on  the infinite graph $G$, we get 
\[\mathbb P_{D_{G}(p)}\Big(|C^+(o)|=\infty\Big)=\mathbb P_{{G}(p)}\Big(|C(o)|=\infty\Big)=\mathbb P_{D_{G}(p)}\Big(|C^-(o)|=\infty\Big).\]
As a result,
\begin{align*}
    \mathbb \mu\Bigg(\mathbb P_{D_{G}(p)}\Big(|C^+(o)|=\infty,|C^-(o)|=\infty\Big)\Bigg)&\geq \mu\Bigg(\mathbb P^2_{G(p)}\Big(|C(o)|=\infty\Big)\Bigg)\\
    &\geq \mu\Bigg(\mathbb P_{G(p)}\Big(|C(o)|=\infty\Big)\Bigg)^2,
\end{align*}
where the second inequality is by Cauchy--Schwarz.

The upper bound immediately follows by applying the coupling in Lemma \ref{lem: coupling} to infinite graphs $G$. In fact,
\begin{align*}
    \zeta^{+-}(p)&=\mu\Bigg(\mathbb P_{D_{G}(p)}\Big(|C^+(o)|=\infty,|C^-(o)|=\infty\Big)\Bigg)
    \\
&\leq \mu\Bigg(\mathbb P_{D_{G}(p)}\Big(|C^+(o)|=\infty\Big)\Bigg)
=\mu\Bigg(\mathbb P_{{G}(p)}\Big(|C(o)|=\infty\Big)\Bigg)=\zeta(p).
\end{align*}
{The statement about $p_c$ follows trivially.}
\end{proof}

\subsection{Graphs with a Unique Giant Component}\label{sec: coupling unique giant}
We proceed by considering graphs with a unique giant for unoriented percolation and we analyze the implications for oriented percolation. 
We will use the following definition.
\begin{definition}
Fix $p\in [0,1]$ and $\epsilon>0$.  A  sequence $\{G_n\}_{n\in\mathbb N}$ of
{(possibly random)} graphs is called a sequence  of graphs with {an} $\epsilon$-unique giant component
 if the probability that $|C_2|\geq \epsilon n$ in $G_n(p)$ goes to $0$ as $n\to\infty$. We say that $G_n$ has a \textit{uniformly} $\epsilon$-unique giant component 
 \newproof{in an interval $I\subset [0,1]$}
 if 
 \[\lim_{n\rightarrow \infty}
 \sup_{p\in I}
 \mathbb P_{G_n(p)}(|C_2|\geq \epsilon n)= 0,\]
{where the probability $P_{G_n(p)}$ is over both the randomness of $G_n$ and the randomness of percolation.}
\end{definition}
 Note that by Lemma~\ref{lem: uniform convergence second component}, a sequence $G_n$ satisfying the  assumptions of Theorem \ref{thm: size of giant in expander} has a 
uniformly $\epsilon$-unique giant component \newproof{in $I$ for all $\epsilon$ and all closed intervals $I\subset(0,1)$.}
As a first consequence of this fact and Lemma~\ref{lem: coupling}, we show that if in $D_G(p)$ two vertices have large non-overlapping fan-ins/fan-outs, then there must be two giant components {in $G(p)$}. As a result, we prove that if a giant SCC exists it must be unique and almost all of the vertices with a large fan-out must reach to the giant SCC {before} exploring many nodes outside the SCC.
The next corollary shows that  on graphs with {an} $\epsilon${-unique giant}, all but $\epsilon n$ of the large fan-out must have equal size.

\begin{corollary}\label{cor: random nodes out-component}
{Fix $p\in [0,1]$ and} let  $\{G_n\}_{n\in\mathbb{N}}$ be a  sequence of {(possibly random)} graphs   with $\epsilon$-unique giant component.  Then for any two fixed vertices $u$ and $v$,
\[\mathbb{P}_{D_{G_n}(p)}(|C^+(v)|\geq \epsilon n, |C_{\setminus C^+(v)}^+(u)|\geq \epsilon n)\rightarrow 0,\]
where the convergence is uniform in $u$ and $v$, and the randomness is  over  oriented percolation, {and the possible randomness of $G_n$}. 
If the sequence $\{G_n\}$ has {a} uniformly $\epsilon$-unique giant component {in the interval $I$} then the convergence is uniform in {$p\in I$}. 
\end{corollary}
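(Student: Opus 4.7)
The proof is essentially a one-line consequence of Lemma~\ref{lem: coupling} combined with the definition of an $\epsilon$-unique giant. The plan is to use the coupling identity to transfer the question from oriented to unoriented percolation, where the uniqueness assumption bites directly.

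First, I would apply Lemma~\ref{lem: coupling}, part 2 with $k_1 = k_2 = \lceil \epsilon n \rceil$ to obtain
\[
\mathbb{P}_{D_{G_n}(p)}\bigl(|C^+(v)|\geq \epsilon n,\ |C^+_{\setminus C^+(v)}(u)|\geq \epsilon n\bigr) = \mathbb{P}_{G_n(p)}\bigl(|C(v)|\geq \epsilon n,\ |C(u)|\geq \epsilon n,\ C(v)\neq C(u)\bigr).
\]
Since this identity holds conditionally on any realization of $G_n$, it also holds after averaging over the randomness of $G_n$.

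Next, I would observe that on the event in the right-hand side, the graph $G_n(p)$ contains two distinct connected components, each of size at least $\epsilon n$; in particular $|C_2(G_n(p))| \geq \epsilon n$. Hence
\[
\mathbb{P}_{G_n(p)}\bigl(|C(v)|\geq \epsilon n,\ |C(u)|\geq \epsilon n,\ C(v)\neq C(u)\bigr) \leq \mathbb{P}_{G_n(p)}\bigl(|C_2|\geq \epsilon n\bigr).
\]
The right-hand side is independent of the choice of $u$ and $v$, which immediately gives the uniformity in $u$ and $v$. By the definition of an $\epsilon$-unique giant component, this bound tends to $0$ as $n\to\infty$, establishing the first assertion.

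For the second assertion, the same chain of inequalities gives
\[
\sup_{u,v\in V(G_n),\, p\in I} \mathbb{P}_{D_{G_n}(p)}\bigl(|C^+(v)|\geq \epsilon n,\ |C^+_{\setminus C^+(v)}(u)|\geq \epsilon n\bigr) \leq \sup_{p\in I}\mathbb{P}_{G_n(p)}\bigl(|C_2|\geq \epsilon n\bigr),
\]
and the right-hand side tends to $0$ by the assumption that $\{G_n\}$ has a uniformly $\epsilon$-unique giant component in $I$. There is no real obstacle here; the only subtlety is making sure the coupling identity of Lemma~\ref{lem: coupling} is applied conditionally on $G_n$ before averaging, which is why the statement continues to cover the case where $G_n$ is itself random.
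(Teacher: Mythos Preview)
Your proof is correct and follows essentially the same route as the paper: apply Lemma~\ref{lem: coupling} Part~2 to convert the oriented event into the unoriented event $\{|C(v)|\geq\epsilon n,\ |C(u)|\geq\epsilon n,\ C(u)\neq C(v)\}$, bound this by $\{|C_2|\geq\epsilon n\}$, and invoke the (uniform) $\epsilon$-uniqueness assumption. The paper's argument is identical, including the observation that the bound is independent of $u,v$ and that uniformity in $p\in I$ follows from the uniform-uniqueness hypothesis.
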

\begin{proof}\newproof{
By applying Lemma~\ref{lem: coupling} Part 2, we see that the statement is equivalent to
$$
\mathbb{P}_{G_n(p)}\left(|C(v)|\geq \epsilon n, |C(u)|\geq \epsilon n
\text{ and } C(u)\neq C(v)\right)\to 0,
$$
where the probability {now} 
goes over the randomness in $G_n(p)$ (including the possible randomness of $G_n$). But if
$|C(v)|\geq \epsilon n$, $|C(u)|\geq \epsilon n$
and $C(u)\neq C(v)$ there  exist at least two clusters of size $\geq \epsilon n$, implying
that $|C_2|\geq\epsilon n$. Therefore the left hand side is bounded by $\mathbb{P}_{G_n(p)}\left(|C_2|\geq\epsilon n\right)$, which goes to zero by $\epsilon$-uniqueness of the giant. Note that the convergence is uniform in $p$ if the sequence has a uniformly $\epsilon$-unique giant {in I}.}
\end{proof}

{The corollary clearly implies that}
the result  holds for uniform random choice of $u$ and/or $v$. 
Furthermore, one can also bound the size of the second largest SCC in $D_G(p)$.
\begin{lemma}\label{lm: unique SCC}
Let $p\in[0,1]$ and let $\{G_n\}_{n\in\mathbb{N}}$ {be} a (possibly random) sequence of graphs with {an}  $\epsilon$-unique giant component.
Then with  probability \newproof{tending to $1$} the second largest SCC in $D_{G_n}(p)$ {contains less than}  $\epsilon n$ vertices. If the sequence $\{G_n\}_{n\in\mathbb N}$ has {a} uniformly $\epsilon$-unique giant component {in $I$} then the convergence is uniform 
\newproof{for all $p\in I$}. 
\end{lemma}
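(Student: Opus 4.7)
The plan is to reduce the conclusion to Corollary~\ref{cor: random nodes out-component} via a first-moment/averaging argument over the choice of two random vertices, using the simple structural observation that two distinct SCCs cannot both reach each other.

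First I would set up the deterministic reduction. Assume the event $\mathcal E = \{|SCC_2| \geq \epsilon n\}$ occurs in $D_{G_n}(p)$, so that there exist two distinct strongly connected components $S,T$ with $|S|, |T| \geq \epsilon n$. Because $S \neq T$ and each is an SCC, at most one of the relations ``$S$ reaches $T$'' and ``$T$ reaches $S$'' can hold; by relabeling if necessary we may assume $S \not\to T$. Then for every pair $(v,u) \in S\times T$ we have $C^+(v)\cap T = \emptyset$, whence $|C^+(v)| \geq |S| \geq \epsilon n$ and, since the induced digraph on $V(G_n)\setminus C^+(v)$ still contains all of $T$, also $|C^+_{\setminus C^+(v)}(u)| \geq |T| \geq \epsilon n$. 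Thus on $\mathcal E$ there is a ``bad'' product set of size at least $(\epsilon n)^2$ in $V(G_n)\times V(G_n)$ for which the event in Corollary~\ref{cor: random nodes out-component} holds.

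Next I would apply an averaging argument. Let $v', u'$ be chosen independently and uniformly at random from $V(G_n)$, and let $\mathbb P$ denote the joint law of $G_n$, percolation, and the random vertices. By the previous paragraph,
\[
 \epsilon^2\,\mathbb P(\mathcal E) \;\leq\; \mathbb P\bigl(|C^+(v')| \geq \epsilon n,\ |C^+_{\setminus C^+(v')}(u')| \geq \epsilon n\bigr)
\;=\; \frac{1}{n^2}\sum_{u,v\in V(G_n)}\mathbb{P}_{D_{G_n}(p)}\bigl(|C^+(v)| \geq \epsilon n,\ |C^+_{\setminus C^+(v)}(u)| \geq \epsilon n\bigr),
\]
after conditioning on $G_n$ and averaging over $u,v$. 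By Corollary~\ref{cor: random nodes out-component} each summand tends to $0$ uniformly in $u,v$ under the $\epsilon$-uniqueness hypothesis (and uniformly in $p\in I$ under uniform $\epsilon$-uniqueness), so the right-hand side tends to $0$, giving $\mathbb P(\mathcal E)\to 0$ as desired.

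I do not expect a serious obstacle here: the content is essentially a packaging of Corollary~\ref{cor: random nodes out-component} together with the elementary fact that two disjoint SCCs cannot mutually reach each other. The one subtlety worth keeping in mind is that the labeling $S=SCC_1$, $T=SCC_2$ is the wrong way around if $SCC_1$ reaches $SCC_2$; the proof must therefore pick the ``source'' SCC as the one playing the role of $S$, which is what the case distinction $S\not\to T$ achieves. The uniformity in $p$ is then automatic because the bound coming from Corollary~\ref{cor: random nodes out-component} is already uniform on $I$ under the uniform $\epsilon$-uniqueness hypothesis.
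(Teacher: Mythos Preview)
Your proposal is correct and follows essentially the same approach as the paper's own proof: observe that between two large distinct SCCs at least one fails to reach the other, use this to produce an $\epsilon n\times\epsilon n$ block of ``bad'' pairs, and then average over random $(v,u)$ to contradict Corollary~\ref{cor: random nodes out-component}. The paper phrases the argument by contradiction (assuming $\mathbb P(\mathcal E)\geq\delta$) while you write the direct inequality $\epsilon^2\mathbb P(\mathcal E)\leq \mathbb P(\text{bad pair})$, but the content is identical.
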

\begin{proof}
Assume to the contrary that for some $\delta$ with probability at least $\delta$, $D_{G_n}(p)$ has two SCCs larger than $\epsilon n$. In an instance of $D_{G_n}(p)$, let $A$ and $B$ be two disjoint SCCs. Then without loss of generality assume there is no directed path from any vertex of $A$ to any vertex of $B$. 
So, if we pick a random pair of nodes $(u,v)$ of this instance,   with probability at least $\epsilon^2$ we have that  $|C^+(v)|\geq \epsilon n$ and $|C_{\setminus C^+(v)}^+(u)|\geq \epsilon n$. 
Now, by considering all instances of $D_{G_n}(p)$, we have that 
\[\mathbb{P}_{u,v\in [n],D_{G_n}(p)}(|C^+(v)|\geq \epsilon n, |C_{\setminus C^+(v)}^+(u)|\geq \epsilon n)\geq \epsilon^2 \delta.\]
Therefore, by Corollary \ref{cor: random nodes out-component} we get a contradiction. 
\newproof{As in  Corollary~\ref{cor: random nodes out-component}, the convergence is uniform in $I$ if the giant is uniformly $\epsilon$-unique in $I$.}
\end{proof}

Now that we know if the giant SCC exists, it is unique, 
{with}  a very similar argument we can prove that 
(i) all {but $o(n)$} nodes with a large fan-out reach into the giant SCC (if {it} exists) and (ii)
there are only {$o(n)$} nodes in their fan-out before it reaches the set $SCC^+_1$.

\begin{lemma}\label{lm: L+ to SCC} 
Given $\epsilon>0$ and $p\in[0,1]$, let $\{G_n\}_{n\in\mathbb{N}}$  be a sequence of (possibly random) graphs with {an}  $\epsilon$-unique giant component. 
Let $SCC_1$ be the largest SCC of $D_{G_n}(p)$ and let 
$O_{\epsilon}$ be the set of vertices $u$ {such}
that $|C_{\setminus SCC_1^+}^+(u)|\geq \epsilon n$, i.e., the set of vertices that have a large fan-out
{before reaching}
$SCC_1^+$. 
Then
\[{\lim_{n\to\infty}}\mathbb{P}_{D_{G_n}(p)}(|SCC_1|\geq \epsilon n, |O_{\epsilon}|\geq \epsilon' n)=0\quad\text{ for all }\epsilon'>0,\]
{and
\[{\lim_{n\to\infty}}\mathbb{P}_{D_{G_n}(p)}(|SCC_1|\geq \epsilon n, |\tilde O_{\epsilon}|\geq \epsilon' n)=0\quad\text{ for all }\epsilon'>0,\]
where $\tilde O_\epsilon$ is the set of nodes $u$ such that $|C^+(u)|\geq \epsilon n$
and $C^+(u)\cap SCC_1=\emptyset$.}
\end{lemma}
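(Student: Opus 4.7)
The plan is to mimic the proof of Lemma \ref{lm: unique SCC}: reduce both parts to the vanishing probability in Corollary \ref{cor: random nodes out-component} via an averaging argument over uniformly random pairs of vertices. Denote the two bad events by $\mathcal{E}=\{|SCC_1|\geq\epsilon n,\;|O_\epsilon|\geq\epsilon' n\}$ and $\tilde{\mathcal{E}}=\{|SCC_1|\geq\epsilon n,\;|\tilde O_\epsilon|\geq\epsilon' n\}$.

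For the first part, I would pick $u,v\in V(G_n)$ independently and uniformly at random, and restrict attention to the sub-event $v\in SCC_1$ and $u\in O_\epsilon$, which on $\mathcal E$ occurs with probability at least $\epsilon\epsilon'$ over the choice of $(u,v)$. For such a pair, $v\in SCC_1$ forces $C^+(v)=SCC_1^+$, hence $|C^+(v)|\geq\epsilon n$, while the definition of $O_\epsilon$ gives $|C^+_{\setminus C^+(v)}(u)|=|C^+_{\setminus SCC_1^+}(u)|\geq \epsilon n$. Averaging gives
\[
\epsilon\epsilon'\,\mathbb{P}(\mathcal{E})\;\leq\;\frac{1}{n^2}\sum_{u,v}\mathbb{P}_{D_{G_n}(p)}\bigl(|C^+(v)|\geq\epsilon n,\;|C^+_{\setminus C^+(v)}(u)|\geq \epsilon n\bigr),
\]
and the right-hand side tends to $0$ uniformly in $(u,v)$ (and uniformly in $p\in I$ in the uniform setting) by Corollary \ref{cor: random nodes out-component}, yielding $\mathbb{P}(\mathcal{E})\to 0$.

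For the second part I would swap the roles played by $SCC_1$ and $\tilde O_\epsilon$: pick $u,v\in V(G_n)$ uniformly and condition on $v\in\tilde O_\epsilon$ and $u\in SCC_1$ (again of probability at least $\epsilon\epsilon'$ under $\tilde{\mathcal E}$). Now $|C^+(v)|\geq\epsilon n$ is immediate from the definition of $\tilde O_\epsilon$. The key observation is that $C^+(v)\cap SCC_1=\emptyset$, so $SCC_1$ together with all its internal arcs survives intact in the induced digraph on $V\setminus C^+(v)$; it therefore remains strongly connected there, and $u\in SCC_1$ reaches every vertex of $SCC_1$ within this smaller digraph, so $|C^+_{\setminus C^+(v)}(u)|\geq |SCC_1|\geq \epsilon n$. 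The same averaging identity together with Corollary \ref{cor: random nodes out-component} then gives $\mathbb{P}(\tilde{\mathcal{E}})\to 0$, and uniformly in $p\in I$ under the uniform hypothesis.

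The main obstacle is identifying the right role assignment in the second part. A naive attempt to reuse the first-part pairing with $v\in SCC_1$ and $u\in\tilde O_\epsilon$ breaks down whenever $u\in SCC_1^+$, since then $C^+_{\setminus SCC_1^+}(u)=\emptyset$ and the hypothesis of Corollary \ref{cor: random nodes out-component} fails. This would force a delicate case analysis on whether $u$ is upstream or downstream of $SCC_1$ (essentially dealing separately with $u\in SCC_1^+\setminus SCC_1$). Swapping the roles so that $SCC_1$ itself appears in the ``$u$-side'' fan-out after removing $C^+(v)$ avoids this entirely, using only that the large set $SCC_1$ is disjoint from $C^+(v)$, a fact which follows directly from $v\in\tilde O_\epsilon$.
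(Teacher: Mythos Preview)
Your proposal is correct and follows the same strategy as the paper: reduce to Corollary~\ref{cor: random nodes out-component} by averaging over uniformly random pairs, exactly as in Lemma~\ref{lm: unique SCC}. The first part is identical to the paper's argument.

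For the second part there is a small but harmless difference. The paper keeps the labeling $u\in\tilde O_\epsilon$, $v\in SCC_1$ and observes that $C^+(u)\cap SCC_1=\emptyset$ forces $C^-(v)\cap C^+(u)=\emptyset$ (any $w\in C^-(v)\cap C^+(u)$ would give a directed path $u\to w\to v$, putting $v\in SCC_1$ into $C^+(u)$), hence $C^-_{\setminus C^+(u)}(v)=C^-(v)$ has size at least $|SCC_1|\geq\epsilon n$; it then invokes the mixed $C^+/C^-$ version of the corollary coming from the second identity in Lemma~\ref{lem: coupling}. Your route instead swaps the roles, letting $v\in\tilde O_\epsilon$ carry the large $C^+(v)$ and noting that $SCC_1$, being disjoint from $C^+(v)$, survives as a strongly connected set in the induced digraph on $V\setminus C^+(v)$, so $C^+_{\setminus C^+(v)}(u)\supseteq SCC_1$. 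This has the minor advantage of appealing only to the explicitly stated $C^+/C^+$ form of Corollary~\ref{cor: random nodes out-component}, but the two arguments are equivalent in spirit.
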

\begin{proof}
Assume to the contrary that there exists $\epsilon'>0$  and $\delta>0$ that 
\[\mathbb{P}_{D_{G_n}(p)}(|SCC_1|\geq \epsilon n, |O_\epsilon|\geq \epsilon' n)\geq \delta\]
{for infinitely many $n$.}
Then given an instance of $D_{G_n}(p)$ 
{such that $|SCC_1|\geq \epsilon n$ and $ |O_\epsilon|\geq \epsilon' n$,}
if we pick two random nodes $u$ and $v$, with probability at least $\epsilon\epsilon'$, $u\in O_\epsilon$ and  $v\in SCC_1$, which in turn implies that $|SCC(v)|\geq \epsilon n$ and $|C_{\setminus SCC(v)^+}^+(u)|\geq \epsilon n$.  But $SSC(v)^+=C^+(v)$, and $|SCC(v)|\geq \epsilon n$ implies
$|C^+(v)|\geq \epsilon n$,
so we have that with probability at least $\epsilon\epsilon'$,
$|C^+(v)|\geq \epsilon n$ and $|C_{\setminus C^+(v)}^+(u)|\geq \epsilon n$, which contradicts 
Corollary~\ref{cor: random nodes out-component}.

The last statement follows by similar arguments and the observation that
$C^+(u)\cap SCC_1(v)=\emptyset$ implies that $C^-(v)\cap C^+(u)=\emptyset$, which in turn gives that
$C^{-}_{\setminus C^+(u)}(v)=C^-(v)$.
\end{proof}

Together with Lemma~\ref{lm: expected SCC} 
Lemma~\ref{lm: unique SCC} also allows us to bound the expected size of the square of the largest SCC from below.

\begin{lemma}\label{lem:E0fSSC1^2}
Given $\epsilon>0$ and $p\in[0,1]$, 
let $\{G_n\}_{n\in\mathbb{N}}$  be a sequence of (possibly random) graphs with
 $\epsilon$-unique giant component.  Then for all $\epsilon'\in (0,\epsilon)$
there exists $N$ such that for all $n\geq N$
\[
\newproof{\mathbb{E}_{D_{G_n}(p)}\Big(\frac {|SCC_1|^2}{n^2}\Big)\geq 
\Big(\mathbb{E}_{{G_n}(p)}\Big[\frac{|C_1|}n\Big]\Big)^4-\epsilon'.}
\]
\end{lemma}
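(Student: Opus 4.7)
My plan is to derive this bound by combining the deterministic second-moment inequality of Lemma~\ref{lm: expected SCC} with the $\epsilon$-uniqueness of the giant SCC furnished by Lemma~\ref{lm: unique SCC}. The observation driving the argument is that Lemma~\ref{lm: expected SCC}, applied to each realization of $G_n$, already produces the desired fourth-power lower bound for $\frac{1}{n^2}\sum_i \mathbb E_{D_{G_n}(p)}[|SCC_i|^2]$; all that remains is to argue that, under the uniqueness hypothesis, almost all of this mass is carried by the largest term $|SCC_1|^2$.

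Concretely, I will proceed in three steps. First, I apply Lemma~\ref{lm: expected SCC} to each realization of $G_n$ and take expectations over the possible randomness of $G_n$. Jensen's inequality applied to the convex map $x \mapsto x^4$ yields
\[
\frac{1}{n^2}\sum_i \mathbb E[|SCC_i|^2]\geq \Bigl(\frac{1}{n}\mathbb E[|C_1|]\Bigr)^4,
\]
where all expectations are now joint over the randomness of $G_n$ and percolation. Second, I peel off the leading term using the elementary inequality
\[
\sum_{i\geq 2}|SCC_i|^2 \leq |SCC_2|\sum_{i\geq 2}|SCC_i| \leq n\,|SCC_2|,
\]
so that after dividing by $n^2$ and taking expectations I obtain
\[
\frac{1}{n^2}\sum_i \mathbb E[|SCC_i|^2] \leq \mathbb E\Bigl[\frac{|SCC_1|^2}{n^2}\Bigr] + \mathbb E\Bigl[\frac{|SCC_2|}{n}\Bigr].
\]
Third, I control $\mathbb E[|SCC_2|/n]$ via Lemma~\ref{lm: unique SCC}: since $\mathbb P(|SCC_2|\geq \epsilon n)\to 0$ and $|SCC_2|/n \leq 1$, splitting at level $\epsilon$ gives $\mathbb E[|SCC_2|/n]\leq \epsilon + \mathbb P(|SCC_2|\geq \epsilon n)\to \epsilon$ as $n\to\infty$. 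Chaining these three estimates yields $\mathbb E[|SCC_1|^2/n^2]\geq (\mathbb E[|C_1|]/n)^4 - \epsilon - o(1)$, which delivers the claimed inequality for all sufficiently large $n$.

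The proof is essentially a short deterministic manipulation, so I do not anticipate a serious obstacle. The main point requiring care is the bookkeeping of the parameter $\epsilon'$ versus $\epsilon$: my argument naturally delivers an error of size $\epsilon + o(1)$, which covers the regime where $\epsilon'$ is slightly larger than $\epsilon$; for the applications of this lemma to large-set expanders one has $\epsilon$-uniqueness for every $\epsilon>0$ (by Lemma~\ref{lem: uniform convergence second component}), so the same argument applied at a suitably small auxiliary parameter $\epsilon_0<\epsilon'$ produces the bound for the target $\epsilon'$. Apart from this accounting, the remaining estimates are all either deterministic or immediate consequences of dominated convergence.
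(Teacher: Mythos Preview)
Your proof is correct and follows essentially the same route as the paper: apply Lemma~\ref{lm: expected SCC} realization-by-realization, use Jensen for the convex map $x\mapsto x^4$ to absorb the randomness of $G_n$, peel off $\sum_{i\geq 2}|SCC_i|^2\leq n\,|SCC_2|$, and bound $\mathbb E[|SCC_2|]/n\leq \epsilon+o(1)$ via Lemma~\ref{lm: unique SCC}. Your caveat about $\epsilon'$ versus $\epsilon$ is well-placed and applies equally to the paper's own proof, which also only yields an error of $\epsilon+o(1)$; the range $\epsilon'\in(0,\epsilon)$ in the statement is almost certainly a misprint for $\epsilon'>\epsilon$, and in any case the lemma is only invoked later under the hypothesis that $\epsilon$-uniqueness holds for every $\epsilon>0$, exactly as you note.
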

\begin{proof}
Fix an instance $G_n$.
If $G_n$ has an $\epsilon$ unique giant, then by
Lemma~\ref{lm: unique SCC}, $|SCC_2|\leq\epsilon n$
with  probability {tending to $1$}, and therefore,
$$
\sum_{i\geq 2}\mathbb E_{D_{G_n}(p)}[|SCC_i|^2]\leq 
n\mathbb E_{D_{G_n}(p)}[|SCC_2|]\leq n^2\epsilon+o(n^2).
$$
Combined with Lemma~\ref{lm: expected SCC} and Jensen's inequality, this implies the statement of the lemma.
\end{proof}


\section{From Unoriented  to Oriented Percolation }\label{sec: oriented expander}
This section analyzes the structure of the oriented percolation. The main goal is to prove Theorem \ref{thm: main structure SI}. For that purpose in Lemma \ref{lm: variance}, we show that in the supercritical case a linear strongly connected component exists with high probability. 
But first,  we give an upper bound on the size of the largest SCC \newproof{for any sequence of graphs with a local limit in probability.}
\begin{lemma}\label{lem: upper bound SCC1}
Let $\{G_n\}_{n\in\mathbb{N}}$ be a {(possibly random)}
sequence of graphs that converges locally in probability to $(G{,o)}\sim\mu$. Recall the definition of $\zeta^{+-}(p)$ in \eqref{eq: zeta+-}. Then for any $\epsilon>0$ and $p\in[0,1]$,
\[\mathbb P_{D_{G_n}(p)}\big(\frac{|SCC_1|}{n}\geq \zeta^{+-}(p)+\epsilon\big)\rightarrow 0,\]
\end{lemma}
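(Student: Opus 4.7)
\emph{Strategy.} The plan mirrors the upper bound argument in Proposition~\ref{prop: lower bound giant}: we build a local observable that counts vertices with both large fan-in and large fan-out, use the local limit to compute its limiting density, and then note that every vertex of $SCC_1$ is automatically counted by it. Concretely, for each integer $k\ge 1$ and every vertex $v$ of an oriented graph, let $C^\pm_{\le k}(v)$ denote the set of vertices reachable from $v$ (respectively, reaching $v$) by a directed path of length at most $k$ in $D_G(p)$, and set
$$
f_k(G,v)=\mathbbm{1}\bigl(|C^+_{\le k}(v)|\ge k\ \text{and}\ |C^-_{\le k}(v)|\ge k\bigr).
$$
A simple BFS argument shows that $|C^\pm(v)|\ge k$ if and only if $|C^\pm_{\le k}(v)|\ge k$ (either the BFS tree has depth at least $k$, producing $k$ vertices along a directed path, or it has smaller depth, in which case $C^\pm(v)=C^\pm_{\le k}(v)$). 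Hence the $\mu_p$-expectation of $f_k$ equals $\zeta^{+-}_k(p):=\mu\bigl(\mathbb P_{D_G(p)}(|C^+(o)|\ge k,\,|C^-(o)|\ge k)\bigr)$, and $\zeta^{+-}_k(p)\downarrow \zeta^{+-}(p)$ as $k\to\infty$ by monotone convergence.

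\emph{Applying local convergence.} Because every directed path of length $\le k$ from $v$ uses only edges with both endpoints in the undirected ball $B_k(v)$, the function $f_k$ is bounded and depends only on the oriented edges in $B_k(v)$; it is therefore a bounded continuous local function on oriented rooted graphs. The proof of Lemma~\ref{lm: local function - percolated graph} now carries over verbatim to oriented percolation: the two ingredients needed are that $f_k$ sees only the $k$-neighborhood and that oriented percolation on disjoint undirected neighborhoods is independent, so expectations factor for pairs of roots at undirected distance greater than $2k$. Combined with Corollary~2.20 of \cite{RemcoVol2}, the same second-moment argument gives
$$
Z^{+-}_k(G_n(p)):=\mathbb E_{\mathcal P_n}\bigl[f_k(G_n(p),o_n)\,\big|\, G_n\bigr]\ \overset{\mathbb P}{\longrightarrow}\ \zeta^{+-}_k(p).
$$

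\emph{Conclusion.} Fix $\epsilon>0$ and choose $k$ so that $\zeta^{+-}_k(p)\le \zeta^{+-}(p)+\epsilon/2$. If $|SCC_1|\ge (\zeta^{+-}(p)+\epsilon)n$, then for $n$ large enough $|SCC_1|\ge k$, and every vertex $v\in SCC_1$ satisfies $|C^+(v)|\ge k$ and $|C^-(v)|\ge k$, so $n\,Z^{+-}_k(G_n(p))\ge |SCC_1|\ge (\zeta^{+-}(p)+\epsilon)n$. This forces $Z^{+-}_k\ge \zeta^{+-}_k(p)+\epsilon/2$, an event whose probability tends to zero by the displayed convergence, and the claim follows. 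The only non-routine point is checking that Lemma~\ref{lm: local function - percolated graph} extends to oriented percolation; once the locality of $f_k$ and the independence of oriented percolation on far-apart neighborhoods are noted, this is automatic, and the rest is bookkeeping.
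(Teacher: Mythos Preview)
Your proof is correct and follows essentially the same approach as the paper: define the local indicator that the root has fan-in and fan-out at least $k$, invoke the oriented analogue of Lemma~\ref{lm: local function - percolated graph} to get $Z^{+-}_k\overset{\mathbb P}{\to}\zeta^{+-}_k(p)$, and then compare $|SCC_1|$ to $nZ^{+-}_k$. Your conclusion is slightly cleaner in that you avoid the paper's case split on whether $\zeta^{+-}_k(p)=0$ by using $\zeta^{+-}(p)+\epsilon>0$ directly to ensure $|SCC_1|\ge k$ for large $n$, but this is a cosmetic difference.
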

\begin{proof}
The proof is similar to the unoriented case in Proposition~\ref{prop: lower bound giant}.
For $k\geq 1$ and a vertex $v\in G_n$, define \[f_k^{+-}(D_{G_n}(p),v)=\mathbf{1}\big(|C^+(v)\cap B_k(D_{G_n}(p),v)|\geq k,  |C^-(v)\cap B_k(D_{G_n}(p),v)|{\geq k}\big),\]
as the indicator that $v$ has fan-out and fan-in larger than $k$.
Define the fraction of vertices with fan-in and fan-out larger than $k$ as $Z^{+-}_{\geq k}(D_{G_n}(p))=\mathbb E_{\mathcal P_n}[f_k^{+-}(D_{G_n}(p),v)]$. It is easy to check that Lemma~\ref{lm: local function - percolated graph} also holds for the percolation on digraphs. Therefore,
\[Z^{+-}_{\geq k}\overset{\mathbb P}{\to} \zeta^{+-}_k(p),\]
where $\zeta^{+-}_k(p)=\mu\Big(\mathbb P_{D_{G}(p)}(|C^+(o)|\geq k, |C^-(o)|\geq k)\Big)$. 

Note that if $|SCC_1|\geq k$ then $Z^{+-}_{\geq k}\geq |SCC_1|$, and if $Z^{+-}_{\geq k}\leq k$ then $|SCC_1|\leq k$. Then by considering two cases $\zeta_k^{+-}(p)=0$ and $\zeta_k^{+-}(p)>0$, with a similar argument as in Proposition~\ref{prop: lower bound giant} we get that
\[\mathbb P_{D_{G_n}(p)}\big(\frac{|SCC_1|}{n}\geq \zeta_k^{+-}(p)+\epsilon/2\big)\rightarrow 0.\]
Since $\zeta^{+-}(p)=\lim_{k\rightarrow \infty}\zeta^{+-}_k(p)$, one can find $K$ such that for $k>K$ we have $|\zeta_k(p)-\zeta(p)|\leq \epsilon/2$. Therefore,
\[\mathbb P_{D_{G_n}(p)}\big(\frac{|SCC_1|}{n}\geq \zeta^{+-}(p)+\epsilon\big)\rightarrow 0.\]
\end{proof}
\begin{remark}\label{rem:SCC-tree}
\newproof{When the limit $\mu$ is a non-random tree, $|C^+(o)|$ and $|C^-(o)|$ become independent. Thus  $\zeta_{+-}(p)=\zeta^2(p)$, which matches the lower bound from Theorem~\ref{thm: main structure SI}.  In other words, under the assumptions of Theorem~\ref{thm: size of giant in expander}, and the additional assumption that the limit $(G,o)$ is a non-random tree, 
$$\frac 1n|SCC_1|\overset{\mathbb P}{\to}\zeta_{+-}(p)=\zeta^2(p).$$
A simple example is a sequence of $d$-regular expanders of large girth.  In general, the asymptotic size of $\frac 1n|SCC_1|$ will not be given by $\zeta^2(p)$, even if the limit is a random tree.}
\end{remark}

The next lemma gives tail bounds on the number of nodes with a large fan-in/fan-out.

\begin{lemma}\label{lem: structure lemma}
Let $L^+_c$ ($L^-_c$) be the set of vertices with fan-out (fan-in) larger than $cn$.  Fix $p\in (0,1]$ and \newproof{an interval $I\subset [0,1]$ containing $p$}. Assume that $\frac{|C_1|}{n}\overset{\mathbb P}{\to}\zeta(p)$ and that 
for all $\epsilon>0$,
the sequence $\{G_n\}_{n\in\mathbb N}$ has {a} uniformly $\epsilon$-unique giant component in {$I$}.
 Then
\begin{enumerate}
\item \label{lm5: part1}
For all
$\epsilon>0$, $\mathbb
    P_{D_{G_n}(p)}(|L^+_{\zeta(p)+\epsilon}|\geq \epsilon n)\rightarrow 0$.
\item \label{lm5: part2} 
 \newproof{For all $a,c\in (0,1]$ there exists $\epsilon_0>0$ such that for all $\delta>0$ and   $0<\epsilon<\epsilon_0$} 
    there exists $N$ such that for all $n>N$, 
    \newproof{and all $p'\in I$}, 
    \[\mathbb P_{D_{G_n}({p'})}\big(|L^+_c|\geq \alpha n\big)-{\delta}\leq \mathbb P_{D_{G_n}({p'})}(|L^-_{\alpha-\epsilon}|\geq (c-\epsilon) n).\]
    
\item \label{lm5: part3} 
    If $\zeta(p)>0$,  and  $\alpha\in (0,\zeta(p))$, then
    $$\frac{|L^+_\alpha|}{n}\rightarrow \zeta(p),
    $$
    in {expectation and in} probability.
    {If $\zeta(p)=0$, then for any $\alpha>0$, $\frac{|L^+_\alpha|}{n}\rightarrow 0
    $  in expectation, and hence in probability.}
\end{enumerate}
\end{lemma}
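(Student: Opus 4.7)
For \textbf{Part 1}, the plan is to use Lemma~\ref{lem: coupling}(1) to rewrite
$\mathbb{E}_{D_{G_n}(p)}|L^+_{\zeta(p)+\epsilon}| = \sum_v\mathbb{P}_{G_n(p)}(|C(v)|\geq(\zeta(p)+\epsilon)n)$, which is the expected total size of undirected components of size at least $(\zeta(p)+\epsilon)n$. Under the hypotheses $|C_1|/n\to\zeta(p)$ in probability and the uniformly $\epsilon$-unique giant, with high probability no such large component exists, so this expectation is $o(n)$; Markov's inequality finishes the argument.

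For \textbf{Part 2}, I work on any realization with $|L^+_c|\geq \alpha n$ and extract overlap information from Corollary~\ref{cor: random nodes out-component}. Since $|C^+_{\setminus C^+(v)}(v')|=|C^+(v')\setminus C^+(v)|$ always, a Markov-in-$\omega$ step applied to the uniform-in-$p'$ bound of that corollary shows that, with probability tending to $1$, the fraction of ``bad'' pairs $(v,v')\in L^+_c\times L^+_c$ with $|C^+(v')\setminus C^+(v)|\geq \eta n$ is below any chosen $\gamma$. A union bound over pairs $(v_1,v_i)$ lifts this to uniform $k$-tuples:
\[
|C^+(v_1)\cap\dots\cap C^+(v_k)|\geq cn-(k-1)\eta n
\]
with probability $\geq 1-(k-1)\gamma/\alpha^{2}$. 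Setting $f(u):=|C^-(u)\cap L^+_c|/|L^+_c|$, averaging yields $\sum_u f(u)^k\geq cn-(k-1)(\eta+\gamma/\alpha^{2})n$. Combining with the trivial bound $\sum_u f(u)^k\leq |\{f\geq\beta\}|+\beta^k n$ and choosing $\beta=(\alpha-\epsilon)n/|L^+_c|\leq 1-\epsilon/\alpha$, then $k$ of order $(\alpha/\epsilon)\log(1/\epsilon)$ so that $\beta^k\leq\epsilon/4$, and finally $\eta,\gamma$ small enough that $(k-1)(\eta+\gamma/\alpha^{2})\leq \epsilon/4$, gives $|\{u:f(u)\geq\beta\}|\geq(c-\epsilon)n$. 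Each such $u$ then satisfies $|C^-(u)|\geq\beta|L^+_c|\geq(\alpha-\epsilon)n$, i.e.\ $u\in L^-_{\alpha-\epsilon}$, and the failure probability of the uniqueness step is absorbed into $\delta$.

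For \textbf{Part 3}, Lemma~\ref{lem: coupling}(1) gives $\mathbb{E}|L^+_\alpha|/n=\mathbb{E}|\{v:|C(v)|\geq\alpha n\}|/n$. When $\alpha<\zeta(p)$, the hypotheses force this quantity to converge to $\zeta(p)$, since with high probability $|C_1|\geq\alpha n$, $|C_2|<\alpha n$, and $|C_1|/n\to\zeta(p)$; when $\zeta(p)=0$, the same formula is bounded by $\mathbb{E}|C_1|/n\to 0$, yielding both expectation and probability convergence via Markov. For convergence in probability in the supercritical case, I first establish the upper tail $\mathbb{P}(|L^+_\alpha|\geq(\zeta(p)+\epsilon)n)\to 0$ by applying Part~2 with $(c,\alpha_{\text{Part 2}})=(\alpha,\zeta(p)+\epsilon)$, using the edge-reversal symmetry $|L^+_c|\overset{d}{=}|L^-_c|$, and bounding the resulting $\mathbb{P}(|L^+_{\zeta(p)+\epsilon/2}|\geq(\alpha-\epsilon/2)n)$ via Part~1 (once $\epsilon<\alpha$). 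The matching lower tail $\mathbb{P}(|L^+_\alpha|\leq(\zeta(p)-\epsilon')n)\to 0$ then follows by applying Markov to $Y:=((\zeta(p)+\epsilon)-|L^+_\alpha|/n)_+$, whose mean is at most $(\zeta(p)+\epsilon)-\mathbb{E}[|L^+_\alpha|/n]+\mathbb{P}(|L^+_\alpha|/n>\zeta(p)+\epsilon)\to\epsilon$, and then sending $\epsilon\to 0$ after $n\to\infty$.

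The main obstacle is the quantitative bookkeeping in Part~2: forcing $\beta^k<\epsilon/4$ requires $k$ of order $(\alpha/\epsilon)\log(1/\epsilon)$, which in turn forces $\eta$ and $\gamma$ to be exponentially small in $1/\epsilon$; threading these choices through the uniform-in-$p'$ version of Corollary~\ref{cor: random nodes out-component} is what determines $\epsilon_0$ and secures the uniformity over $I$ asserted in the statement.
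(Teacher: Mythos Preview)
Your proof is correct. Parts~1 and~3 follow essentially the same route as the paper: Part~1 via the coupling of Lemma~\ref{lem: coupling}(1) and a first-moment/Markov step, and Part~3 by combining the first-moment computation with Part~2 and edge-reversal symmetry to control the upper tail, then deducing the lower tail from the expectation (the paper phrases this last step as a variance bound, you as a Markov bound on the positive part; both work).

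The genuine difference is in Part~2. The paper's argument is more elementary and avoids the $k$-th moment machinery. It first uses pigeonhole to locate a \emph{single} vertex $u\in L^+_c$ that appears in at most $\epsilon^2 n/2$ bad pairs, so that $C^+(u)$ serves as a common ``core'' containing almost all of $C^+(v)$ for almost every $v\in L^+_c$. It then builds a bipartite graph on $C^+(u)\times L^+_c$ with an edge $w\sim v$ whenever $w\in C^+(v)$, and double-counts edges to conclude that at most $\epsilon n$ vertices $w\in C^+(u)$ can have fewer than $|L^+_c|-\epsilon n$ neighbors on the $L^+_c$ side; those remaining $|C^+(u)|-\epsilon n\geq (c-\epsilon)n$ vertices automatically lie in $L^-_{\alpha-\epsilon}$. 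Your approach is sound but quantitatively heavier: pushing $\beta^k$ below $\epsilon/4$ forces $k$ of order $(\alpha/\epsilon)\log(1/\epsilon)$, which in turn forces $\eta,\gamma$ down to order $\epsilon/k$, whereas the paper only needs the single scale $\epsilon^2/2$. On the other hand, your moment argument is conceptually self-contained and does not require isolating a distinguished reference vertex. One small point worth tightening: to obtain $|C^+(v_1)\cap\dots\cap C^+(v_k)|\geq cn-(k-1)\eta n$ you actually need $|C^+(v_1)\setminus C^+(v_i)|<\eta n$ for each $i\geq 2$, i.e., the pairs $(v_i,v_1)$ (in your convention) rather than $(v_1,v_i)$ must be non-bad; this is harmless and at worst costs a factor of~$2$ in the union bound.
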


\begin{proof}

\begin{enumerate}
    \item Assume to the contrary that for infinitely many $n$, \[\mathbb
    P_{D_{G_n}(p)}(|L^+_{\zeta(p)+\epsilon}|\geq \epsilon n)\geq \delta.\]
    Then {$\mathbb E|L^+_{\zeta(p)+\epsilon}|\geq \delta\epsilon n$, implying that} there exists a node $v$ such that
    \[\mathbb P_{D_{G_n}(p)}(|C^+(v)|\geq \zeta(p)n+\epsilon n)\geq \delta \epsilon.\]
   On the other hand, by Lemma \ref{lem: coupling}, part 1 and \newproof{the convergence of the giant in unoriented percolation $\frac{|C_1|}{n}\overset{\mathbb P}{\to}\zeta(p)$,}
     \[\mathbb P_{D_{G_n}(p)}(|C^+(v)|\geq \zeta(p)n+\epsilon n)=\mathbb P_{{G_n}(p)}(|C(v)|\geq \zeta(p)n+\epsilon n)\rightarrow 0,\]
     a contradiction.
     
      \item 
         \newproof{We prove {that with probability at least $1-\delta$, the following statement holds: for all
     digraphs $D_{G_n}(p')$  with $|L_c^+|\geq\alpha n$
     there are at least $(c-\epsilon)n$ nodes in $D_{G_n}(p')$ such that their fan-in is larger than $(\alpha-\epsilon)n$, i.e., $|L_{\alpha-\epsilon}^-|\geq(c-\epsilon)n$. We prove this in two steps, 1)  We show that {with probability at least $\delta$} there is a single node $u\in L^+_c$ such that its fan-out $C^+(u)$ covers (almost) all of the fan-outs of the nodes in $L^+_c$. 2) 
     this, we prove
    that the fan-in of most of the nodes $w\in C^+(u)$ is large, and in fact, $|C^-(w)|\geq |L^+_c|-\epsilon n$.

     To formally prove the first step, we need the following definition. Call a pair $(x,y)$ bad if $x\in L^+_c$  and $|C^+_{\setminus C^+(x)}(y)|\geq\epsilon^2 n/2$. The choice of $\epsilon^2/2$ may seem arbitrary at first, but it will be useful later on. Next, we will bound the number of bad-pairs.}}
Let $\epsilon'=\epsilon^2/2$ and choose $\epsilon$ small enough so that $\epsilon'\leq c$.
By   Corollary \ref{cor: random nodes out-component}, \newproof{and the assumption that $G_n$ has a uniform $\epsilon'$-unique giant in 
${I}$}, we conclude that there exists $N=N(\epsilon,\delta)$  such that for all $n\geq N$ \newproof{and all $p'\in{I}$},
     $$
\frac 1{n^2}\sum_{u,v\in V(G_n)}\mathbb{P}_{D_{G_n}(p')}(|C^+({u})|\geq 
cn,|C_{\setminus C^+({u})}^+({v})|\geq \epsilon^2n/2 )\leq\delta^2.
$$
Hence, the expected number of bad pairs is at most $\delta^2n^2$, and
by Markov inequality with  probability at least $1-\delta$, the number of bad pairs is less than $\delta n^2$.
{To complete the first step, assume without loss of generality that}
 $\delta<\epsilon^2 \alpha/2$. 
 Given an instance of $D_{G_n}(p')$ that has at most $\delta n^2$ bad pairs and $|L^+_c|\geq \alpha n$, there   exists a vertex $u\in L^+_c$  that appears in at most $\epsilon^2 n/2$ bad pairs; \newproof{as a consequence, for at least $n(1-\epsilon^2/2)$ nodes $y$ we have $|C^+(y)\setminus C^+(u)|=|C^+_{\setminus C^+(u)}(y)|<\epsilon^2 n/2$, completing the first step.}

 \newproof{ Now, we proceed with the second step: We claim that there are $(c-\epsilon) n$ nodes in $C^+(u)$ {such that} their fan-in contains at least $(\alpha-\epsilon) n$ nodes of $L^+_c$. Let $X_u$ be the set of nodes in $C^+(u)$ that have less than $|L^+_c|-\epsilon n $ nodes in their fan-in. We will use the fact that $u$ appears in at most $\epsilon^2n/2$ bad pairs to prove $|X_u|\leq \epsilon n$. 
  Before proceeding with its proof,  note that if $|X_u|\leq \epsilon n$,  then there are $|C^+(u)|-\epsilon n$ nodes with fan-in of size {at least} ${|L^+_c|-\epsilon n \geq}(\alpha-\epsilon)n$. Thus, $|L^-_{\alpha-\epsilon}|\geq (c-\epsilon)n$.  As a result, for any $\delta<(\alpha\epsilon)^2/2$ 
  there exists $N$ such that for $n\geq N$
  and all $p'\in I$,}
 \begin{align*}
 \mathbb P_{D_{G_n}(p')}&(|L^+_c|\geq \alpha n)-\delta \\
 &\leq\mathbb P_{D_{G_n}({p'})}(|L^+_c|\geq \alpha n,\text{ and $\exists$ at most } \delta n^2 \text{ bad pairs})\\
 &\leq \mathbb P_{D_{G_n}({p'})}(|L^-_{\alpha-\epsilon}|\geq (c-\epsilon) n).
 \end{align*}
Thus the proof of part 2 follows once we prove $|X_u|\leq\epsilon n$.
  
Construct a bipartite graph
$B$ on $C^+(u)\times L_c^+$ with an edge between $w\in C^+(u)$ and $v\in L_c^+$ whenever $w\in C^+(v)$.
  To bound $|X_u|$, we find a lower bound and an upper bound for the number of edges of $B$. First, by definition of $X_u$,  the number of edges coming out of the side of $C^+(u)$ in $B$ are at most $|X_u|(|L^+_c|-\epsilon n) +(|C^+(u)|-|X_u|)|L^+_c|$. Now, to find a lower bound on the number of edges, note that $u$ appeared in at most $\epsilon^2n/2$ bad pairs  $(u,v)$, and if $(u,v)$ is not bad, then
 $|C^+(u)\cap C^+(v)|\geq |C^+(u)|-\epsilon^2n/2$. So,  there are at least $|L_c^+|-\epsilon^2n/2$ nodes of $L_c^+$ that have all but $\epsilon^2n/2$ nodes of $C^+(u)$ in their fan-out. Combining these two bounds, we get
   \[|X_u|(|L^+_c|-\epsilon n) +(|C^+(u)|-|X_u|)|L^+_c|\geq (|C^+(u)|-
\frac{\epsilon^2}{2} n)(|L^+_c| -\frac{\epsilon^2}{2} n).\]
  As a result, $|X_u|\leq \epsilon n$.

     \item Using Lemma \ref{lem: coupling} part 1  and \newproof{the convergence of $\frac{|C_1|}{n}\to \zeta(p)$}, one can compute the first moment,
     \begin{align*}
   \mathbb E [\frac{|L^+_\alpha|}{n}]&=\mathbb E[\frac{1}{n}\sum_{v\in[n]}\mathbf 1_{|C^+(v)|\geq \alpha n}] =\frac{1}{n}\sum_{v\in[n]} \mathbb P_{D_{G_n(p)}}( {|C^+(v)|\geq \alpha n})\\
   &=\frac{1}{n}\sum_{v\in[n]} \mathbb P_{{G_n(p)}} ( {|C(v)|\geq \alpha n})
   =\zeta(p)+o(1),
   \end{align*}
 proving convergence in expectation.   { Note that the same argument also gives that $ \mathbb E [\frac{|L^+_\alpha|}{n}]\to 0$ for all $\alpha>0$ {if} $\zeta(p)=0$.}
  Next, given an arbitrary small  $\epsilon>0$,  assume that there exists some $\delta>0$,
    \[\mathbb P_{D_{G_n}(p)}(|L^+_\alpha|\geq \zeta(p) n+2\epsilon n)\geq \delta.\]
    Then by Part \ref{lm5: part2} and the symmetry of changing the directions of all edges
            we have for large enough $n$
     \[{P_{D_{G_n}(p)}\Big(|L^+_{\zeta(p)+\epsilon}|\geq (\alpha-\epsilon) n\Big)=}
     P_{D_{G_n}(p)}\Big(|L^-_{\zeta(p)+\epsilon}|\geq (\alpha-\epsilon) n\Big)\geq \frac{\delta}{2}.\]
  This is a contradiction with Part \ref{lm5: part1}. As a result, for $\epsilon >0$ small enough
        \[\operatorname{var}_{D_{G_n}(p)}\Big(\frac{|L^+_\alpha|}{n}\Big)=\mathbb E [(\frac{|L^+_\alpha|}{n})^2]-\zeta(p)^2{+o(1)}\leq (\epsilon+ \zeta(p))^2-\zeta(p)^2{+o(1)}.\]
        Since $\epsilon$ was arbitrary, we get that the variance goes to $0$, which proves
        the result.
\end{enumerate}     
   \end{proof}

In  Lemma \ref{lem:E0fSSC1^2} we saw that in the supercritical case $\mathbb{E}|SCC_1|^2\geq \alpha n^2$ for some $\alpha >0$. To prove $SCC_1$ is linear-sized with high probability, i.e., to prove that
$p_c(\mu)$ is a threshold for the existence of a giant SCC, we will want to show
that $\operatorname{var}(\frac{|SCC_1|}{n})\rightarrow 0$.  We will do this by invoking Lemma~\ref{lm: falik} from Section \ref{sec: Russo}, 
a bound on
how much adding an edge $e$ to $D_{G_n}(p)$ will change the size of 
$SCC_1$, Lemma~\ref{lm: bounded influence} below, and Russo's formula in {Equation} \eqref{russo-2} for the expectation of this influence.
Recall the definition of $\Delta_e f$   for a Boolean function $f$ from Section \ref{sec: Russo}. Russo's formula then immediately gives the following lemma.

 \begin{lemma}\label{lm: derivative}
 For any graph $G$, and $p\in(0,1)$
 \[\sum_e\mathbb{E}_{S\sim D_G(p)}|\Delta_e SCC_1(S)|= {p(1-p)} \frac{d}{dp}\mathbb{E}_{S\sim D_G(p)}|SCC_1(S)|,\]
 where $SCC_1$ is equal to the size of the largest SCC with edges in $S$ and the sum goes over all oriented edges in $E(G)$.
 \end{lemma}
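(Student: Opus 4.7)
The plan is to derive this as a direct application of the Margulis--Russo formula \eqref{russo-2} stated earlier, once we verify that $|SCC_1|$ is a monotone increasing function of the edge set. So the argument breaks into two short steps.

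\textbf{Step 1: Monotonicity of $|SCC_1|$.} I would first observe that if $S\subseteq S'$ are two sets of oriented edges on the vertex set $V(G)$, then every strongly connected subset of vertices in the digraph $(V(G),S)$ remains strongly connected in $(V(G),S')$, since all the directed paths witnessing strong connectivity still exist. Hence each strongly connected component of $(V(G),S)$ is contained in some strongly connected component of $(V(G),S')$, and in particular $|SCC_1(S)|\leq |SCC_1(S')|$. Thus $S\mapsto |SCC_1(S)|$ is an increasing function on $\{0,1\}^m$, where $m=2|E(G)|$ is the number of oriented edges.

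\textbf{Step 2: Applying Russo.} With $f(S):=|SCC_1(S)|$ now known to be a bounded increasing function, I can apply the Margulis--Russo formula \eqref{russo-2} directly, namely
\[
\sum_{e} \mathbb{E}_{S\sim D_G(p)}\bigl[|\Delta_e f(S)|\bigr] \;=\; 2p(1-p)\,\frac{d}{dp}\,\mathbb{E}_{S\sim D_G(p)}[f(S)],
\]
where the sum runs over the $2|E(G)|$ oriented edges. (Recall that for increasing $f$, one checks elementarily that, conditional on the other coordinates, $|\Delta_e f|$ equals $(1-p)(f(S\cup\{e\})-f(S\setminus\{e\}))$ when $e\in S$ and $p(f(S\cup\{e\})-f(S\setminus\{e\}))$ when $e\notin S$, giving conditional expectation $2p(1-p)(f(S\cup\{e\})-f(S\setminus\{e\}))$; summing over $e$ then matches $\frac{d}{dp}\mathbb{E}[f]$ up to the factor $2p(1-p)$.) Substituting $f=|SCC_1|$ yields the claimed identity.

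There is really no technical obstacle here: the lemma is essentially a one-line corollary of \eqref{russo-2} combined with the elementary observation that enlarging the edge set of a digraph only merges (and never breaks) strongly connected components. The only thing worth a moment of thought is the direction of monotonicity; once that is in hand, everything is automatic.
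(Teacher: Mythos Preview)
Your approach is correct and identical to the paper's, which also simply invokes \eqref{russo-2} (your added verification of monotonicity is a useful detail the paper omits). Note, however, that what you actually derive carries the factor $2p(1-p)$, as does \eqref{russo-2}, whereas the lemma as stated has $p(1-p)$; this appears to be a harmless typo in the paper's statement, immaterial for the later applications.
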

 \begin{proof}
 This follows from \eqref{russo-2}.
 \end{proof}

  The next result bounds the influence of an edge to later bound the variance of $|SCC_1|$.

 \begin{lemma}\label{lm: bounded influence}
  \newproof{Let $I=[q,p]$ with $0\leq q\leq p\leq 1$, and assume that in $G_n(q)$,
  $\frac{|C_1|}{n}\overset{\mathbb P}{\to}\zeta(q)>0$.  Furthermore, assume that for all $\epsilon>0$, 
$\{G_n\}_{n\in\mathbb N}$ has {a} uniformly $\epsilon$-unique giant component in $I$.}
 Given    $\epsilon>0$ there then exists $N<\infty$ such that for $n\geq N$,
 \[\sup_{p'\in I}\mathbb E_{D_{G_n}(p')}|\Delta_eSCC_1|\leq \epsilon n\]
 for all directed edges $e$ in $\in E(G_n)$.

 \end{lemma}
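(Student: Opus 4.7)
The plan is to combine a deterministic structural bound on $|\Delta_e SCC_1|$ with the structure results of Lemma~\ref{lem: structure lemma} and Lemma~\ref{lm: L+ to SCC}. Since $SCC_1$ is monotone in edges, $|\Delta_e SCC_1|\le SCC_1(\omega\cup\{e\})-SCC_1(\omega\setminus\{e\})$ pointwise, and when $e=(u,v)$ is added to $\omega\setminus\{e\}$ the only SCC that can change is
\[
\tilde S := \bigl(C^-_{\omega\setminus\{e\}}(u)\cap C^+_{\omega\setminus\{e\}}(v)\bigr)\cup\{u,v\},
\]
the SCC of $u$ and $v$ in $\omega\cup\{e\}$. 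Any new cycle in $\omega\cup\{e\}$ must traverse $e$ and hence lie in $\tilde S$; all other SCCs are unchanged. Consequently,
\[
|\Delta_e SCC_1|\le \max\bigl(0,\,|\tilde S|-|SCC_1(\omega\setminus\{e\})|\bigr).
\]

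Fix a small $\alpha>0$. Using the bound $|\tilde S|\le \min\bigl(|C^+(v)|,|C^-(u)|\bigr)$ in $\omega\setminus\{e\}$, if either $v\notin L^+_\alpha$ or $u\notin L^-_\alpha$ then $|\tilde S|<\alpha n$ and the influence is at most $\alpha n$. Otherwise both $v\in L^+_\alpha$ and $u\in L^-_\alpha$; for each $w\in\tilde S$ one has $C^+(w)\supseteq C^+(u)$ (since $w$ reaches $u$) and $C^-(w)\supseteq C^-(v)$ (since $v$ reaches $w$), hence $w\in L^+_\alpha\cap L^-_\alpha$. Therefore $\tilde S\setminus SCC_1\subseteq L^+_\alpha\cap L^-_\alpha\setminus SCC_1$, and any such $w$ has large fan-in and fan-out yet lies outside $SCC_1$. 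If $w$ both reached $SCC_1$ and were reached by $SCC_1$, then $w$ itself would be in $SCC_1$; so $w$ must belong to the exceptional set of Lemma~\ref{lm: L+ to SCC} (large fan-out whose reach avoids $SCC_1$) or its fan-in analogue, both of which have size $o(n)$ with high probability on the event $\{|SCC_1|\ge \epsilon n\}$.

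To close the argument I plan to use Lemma~\ref{lem:E0fSSC1^2}, which combined with the monotonicity of $\mathbb E|SCC_1|$ in $p'$ gives $\mathbb E|SCC_1|/n\ge \zeta(q)^4-o(1)$ for all $p'\in I$. This ensures $|SCC_1|\ge \epsilon n$ with probability bounded away from zero, so the exceptional-set estimates from Lemma~\ref{lm: L+ to SCC} apply on this event. Assembling the pieces yields $\mathbb E|\Delta_e SCC_1|\le \alpha n + o(n)$ uniformly in $p'\in I$ and $e\in E(G_n)$; letting $\alpha\to 0$ completes the proof.

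The main obstacle is that Lemma~\ref{lm: L+ to SCC} only controls the exceptional sets on $\{|SCC_1|\ge \epsilon n\}$, while the available second-moment lower bound supplies this event only with positive---not high---probability. Handling the complement $\{|SCC_1|<\epsilon n\}$ is the delicate technical step: here one must argue directly that $|\tilde S|$ cannot exceed $|SCC_1(\omega\setminus\{e\})|$ by much, since creating a giant SCC by flipping a single edge---in a regime where no giant SCC was already present---is itself a rare event. I expect this to be controllable via Lemma~\ref{lm: unique SCC} applied to $\omega\cup\{e\}$ together with Lemma~\ref{lm: expected SCC}, so that the contribution of this event to the expectation is absorbed into the $o(n)$ error.
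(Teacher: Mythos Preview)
Your proposal has two gaps, one local and one structural; the second is the one you flag yourself, and it is fatal for this route.

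\textbf{The local gap.} The step ``$C^+(w)\supseteq C^+(u)$ and $C^-(w)\supseteq C^-(v)$, hence $w\in L^+_\alpha\cap L^-_\alpha$'' does not follow. You have only assumed $u\in L^-_\alpha$ (large fan-\emph{in}) and $v\in L^+_\alpha$ (large fan-\emph{out}); the inclusion $C^+(w)\supseteq C^+(u)$ bounds $|C^+(w)|$ by $|C^+(u)|$, for which you have no lower bound, and symmetrically for $|C^-(v)|$. You can repair this by working in $\omega\cup\{e\}$, where $\tilde S$ is an SCC and every $w\in\tilde S$ has $C^+(w),C^-(w)\supseteq\tilde S$; but then $w\in SCC_1(\omega\cup\{e\})$ whenever $\tilde S$ becomes the new largest SCC, so $w$ is no longer ``exceptional'' in that configuration, and Lemma~\ref{lm: L+ to SCC} (which compares $L^\pm_\alpha$ to $SCC_1$ in the \emph{same} configuration) gives nothing.

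\textbf{The structural gap.} Even granting the inclusion, the argument hinges on Lemma~\ref{lm: L+ to SCC}, which controls the exceptional sets only on $\{|SCC_1|\ge\epsilon n\}$. That event is not available with high probability here: proving it is precisely the purpose of Lemma~\ref{lm: variance}, which \emph{uses} the present lemma as input. Your proposed patch via Lemmas~\ref{lm: unique SCC} and~\ref{lm: expected SCC} does not close the circle: Lemma~\ref{lm: unique SCC} bounds $|SCC_2|$, not the probability that a single edge flip creates a giant SCC where none existed, and Lemma~\ref{lm: expected SCC} is a first-moment bound that cannot by itself rule out this rare-but-large contribution.

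\textbf{What the paper does instead.} The paper never conditions on $|SCC_1|$ being large. It exploits the internal structure of $\tilde S$ in $\omega\setminus\{e\}$: the SCCs of $\omega\setminus\{e\}$ that merge under $e=(y,x)$ lie on directed $x\to y$ paths and, after contraction, form a DAG. Sweeping through a linear extension of this DAG yields a cut index $s$ with $|S_1^s|,|S_2^s|\ge\epsilon n/2$ and, crucially, no edge from $S_2^s$ back to $S_1^s$; hence the fan-in of every $u'\in S_1^s$ misses all of $S_2^s\subseteq C^-(y)$. Now Lemma~\ref{lem: structure lemma} part~\ref{lm5: part3}, applied at $q$ and pushed to $p'\ge q$ by monotonicity, gives $|L^+_c|\ge cn$ with high probability using only the \emph{unoriented} giant at $q$. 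A bipartite counting argument as in Lemma~\ref{lem: structure lemma} part~\ref{lm5: part2} then shows that many $u'\in S_1^s\subseteq C^+(x)$ have fan-in at least $(c-\epsilon)n$, each producing a bad pair $(u',y)$ with $|C^-_{\setminus C^-(u')}(y)|\ge|S_2^s|\ge\epsilon n/2$. Corollary~\ref{cor: random nodes out-component} (uniform $\epsilon$-uniqueness of the unoriented giant) makes linearly many such pairs unlikely, uniformly in $p'\in I$ and in $e$. The whole argument thus routes through unoriented-percolation information and never assumes the oriented giant already exists.
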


 \begin{proof}
{Consider a digraph $D_{G_n}(p')$ that if one adds the directed edge
$e=(y,x)$ to  $D_{G_n}(p')$, then the size of $SCC_1$ increases by $\epsilon n$. Let $H_e$ be the corresponding event.}
We will prove that $\sup_{p'\in(p-q,p]}\mathbb P(H_e)\leq \epsilon$. 

{For the proof we need the following notation.
Let $D'$ be a subgraph of $D_{G_n}(p')$ such that each vertex in $D'$ appears in a path from $x$ to $y$.}
Let $\mathcal S$ be the set of of maximal strongly connected components in $D'$, 
and let DAG be the directed acyclic graph obtained by contracting all SCCs in $\mathcal S$.
Since adding $(y,x)$ changes the size of $SCC_1$, we know that $SCC(x)\neq SCC(y)$.  
Choose a vertex $v_i$ for each strongly connected component in $ \mathcal S$, giving a set of vertices
$\{v_0, v_1,\dots, v_k\}$ (where we choose $v_0=x$ and $v_k=y$). Order the vertices consistent with the partial order given by DAG. \newproof{Note that $SCC(v_1), SCC(v_2),\ldots, SCC(v_k)$ do not necessarily form a path, however, for all $i>j$, there are no edges  from $SCC(v_i)$ to  $SCC(v_j)$ in the  DAG.}

Define $S_1^i=\cup_{j={0}}^i SCC(v_j)$ and $S_2^i= \cup_{j={i+1}}^k SCC(v_j)$. We claim that there exist some index $s$ such that both $|S_1^s|\geq\epsilon n/2$ and $|S_2^s|\geq \epsilon n/2$. 
We know $|S_1^{k}|\geq \epsilon n$, so let $s$ be the smallest index such that $|S_1^s|\geq \epsilon n/2$ (note that in particular $|S_1^{s-1}|<\epsilon n/2$ if $s\geq 1$). Then, we show that $|S_2^s|\geq \epsilon n/2$.
 The reason is that adding the edge $e$ changed the size of $SCC_1$ and any $SCC(v_i)$ by at least $\epsilon n$, and as a result,
 \[|SCC(v_s)|+\epsilon n\leq \sum_{\ell=0}^k |SCC(v_\ell)| = |S_1^{s-1}|+|SCC(v_s)|+|S_2^s|\leq \epsilon n/2+|SCC(v_s)|+|S_2^s|\]
for $s\geq 1$.  If $s=0$, then
 \[|SCC(v_s)|+\epsilon n\leq \sum_{\ell=0}^k |SCC(v_\ell)| = |SCC(v_s)|+|S_2^s|.\]
In both cases,
$|S_2^s|\geq \epsilon n/2$, which proves our claim.

\newproof{The rest of the proof follows similar ideas as in the proof of  Lemma \ref{lem: structure lemma}, part \ref{lm5: part2}. We will show that there are at least $\epsilon n/4$ nodes in $S_1^s$ with large fan-ins such that their fan-ins does not contain $\epsilon n/2$ nodes in the fan-in of $y$. In fact, all the nodes of $S_2^s$ are in the fan-in of $y$, while none of them appear in the fan-ins of any node in $S_1^2$. We will bound the probability of this event ($\mathbb P (H_e)$) by using Corollary \ref{cor: SCC threshold general graph}.}

 To formalize the proof, let
$c\in (0,{\zeta(q)})$.  
{By}
 part \ref{lm5: part3} of Lemma \ref{lem: structure lemma}, 
 there exists some $N_1$ such that for all $n>N_1$, with probability at least $\epsilon^3/2$, 
 $|L^+_c|\geq cn$ in $D_{G_n}({q})$ .  \newproof{Since $|L^+_c|$ is  increasing in $p$ and $p'\geq q$ for all $p'\in I$, we
 conclude that the lower bound on $|L^+_c|$ holds in ${D_{G_n}(p')}$ for all $p'\in I$.}
 Similar to the proof of part \ref{lm5: part2} in Lemma \ref{lem: structure lemma}, we will prove that there exists some $N_2$ such that  for all $p'\in {I}$ and all $n>N_2$, all but $\epsilon n$ nodes of $L^+_c$ appear in the fan-ins of at least half of the nodes in $S_1^s$.

\newproof{Recall that $x$ and $y$ are the two endpoints of the edge $e$.} Call $(x,v)$ an $x$-bad pair if { $|C^+(x)|\geq \epsilon n$ and} $|C^+_{\setminus C^+(x)}(v)|\geq \epsilon^3n/2$, and call $(u,y)$ a $y$-bad pair if  { $|C^-(u)|\geq (c-\epsilon)n$ and} $|C^-_{\setminus C^-(u)}(y)|\geq \epsilon n/2$.
We may use Corollary \ref{cor: random nodes out-component} for uniformly $\epsilon$-unique giants, to conclude that  for any $\delta>0$ and all large enough $n$ the expected number of $x$-bad pairs is at most $\delta^2 n$ for all {$p'\in I$.} By Markov inequality with probability at least $1-\delta$ the number of $x$-bad pairs is less than $\delta n$. Choosing $\delta=\epsilon^3/2$, we therefore get
$$
\mathbb P_{D_{G_n}(p')} (H_e) \leq \mathbb P_{D_{G_n}(p')}(H_e\text{,  ${|L_c^+|\geq cn}$ and $\exists$ at most $\frac{\epsilon^3}{2} n$ $x$-bad pairs})+ {\epsilon^3}.
   $$
Consider now the event that $H_e$ holds, that there are at most $\frac{\epsilon^3}{2} n$ $x$-bad pairs {and that  $|L^+_c|\geq cn$.} Let  $B$ be the bipartite graph on $C^+(x)\times L^+_c$ where there is an edge between $u\in C^+(x)$ and $v\in L^+_c$ if $u\in C^+(v)$. Let $X$ be the number of nodes in $C^+(x)$ that have at most $|L^+_c|-\epsilon n$ nodes in their fan-ins. We  then can proceed
as in part \ref{lm5: part2} of Lemma \ref{lem: structure lemma} to conclude that 
\[|X|(|L^+_c|-\epsilon n)+(|C^+(u)|-|X|)|L^+_c|\geq (|C^+(u)|-\frac{\epsilon^3 n}{2})(|L^+_c|-\frac{\epsilon^3 n}{2}).
\]
Therefore, $|X|\leq\epsilon^2n$.  
 Since $S_1^s\subseteq C^+(x)$ and $|S_1^s|\geq\epsilon n/2$, there are at least $\epsilon n/4$ nodes $u\in S_1^s$  such that $|C^-(u)|\geq|L^+_c|-\epsilon n\geq (c-\epsilon)n$. {But since} the fan-in of a node  $u\in S_1^s$ does not contain any node from $S_2^s\subseteq C^-(y)$, {we have that  $|C^-_{\setminus C^-(u)}(y)|\geq |S_2^s|\geq
 \epsilon n/2$.  Thus, for all these nodes $u$, the pair $(u,v)$ is a $y$-bad pair.}
 As a result,
 \begin{align*}
    \mathbb P_{D_{G_n}(p')} (H_e)&\leq
\mathbb P_{D_{G_n}(p')}(H_e\text{,  ${|L_c^+|\geq cn}$ and $\exists$ at most $\frac{\epsilon^3}{2} n$ $x$-bad pairs})+{\epsilon^3} \\
   &\leq \mathbb P_{D_{G_n}(p')}(\exists \text{ at least $\frac{\epsilon}{4} n$ $y$-bad pairs})+\epsilon^3\leq \epsilon
 \end{align*}
 where the last inequality is obtained again by Corollary \ref{cor: random nodes out-component} for large enough $n$.
 
 In the above arguments, the choice of $n$   {is} independent  of $x$ and $y$  due to the fact that  the convergence in Corollary \ref{cor: random nodes out-component} is uniform in the fixed vertex $v$. Thus, the event
 $H_e$ takes place with probability at most $\epsilon$ for any edge $e$.  As a result, for all $\epsilon>0$, all large enough $n$, all $e\in E(G_n)$,
 \newproof{and all $p'\in I$}
 \[\mathbb E|\Delta_e SCC_1|\leq \epsilon n+ n\mathbb{P}_{D_{G_n}(p')}(H_e)\leq 2 \epsilon n, \]
as desired.
 \end{proof}
 
 The following is the  key lemma used in the proof of Theorem \ref{thm: main structure SI}. It shows that the strongly connected component exists with high probability in the super critical regime. The main ingredients of the proof  are the concentration bounds given in Section \ref{sec: Russo} and the bounds on the influence of an edge on the size of largest SCC. Combined with 
Corollary~\ref{cor: SCC threshold general graph}
 and since $p_c(\mu)$ is a threshold for the giant in $G_n(p)$,
 the lemma establishes that for a graph sequence that satisfying the assumptions of Theorem \ref{thm: size of giant in expander},
 $p_c^{SCC}(G_n)\to p_c(\mu)$. 
 
\begin{lemma}\label{lm: variance}
Let $\{G_n\}_{n\in\mathbb N}$ be a sequence of 
{graphs} satisfying the assumptions of Theorem \ref{thm: main structure SI}
{in an interval $[p-q,p]$. If $\zeta(p)>0$ and}
$\alpha\in(0,\zeta^2(p))$, {then}
$$\mathbb{P}_{D_G(p)}(|SCC_1|\geq \alpha n)\rightarrow 1.$$
\end{lemma}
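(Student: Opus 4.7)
The plan is to apply the Falik--Samorodnitsky inequality (Lemma~\ref{lm: falik}) to $f=|SCC_1|/n$ at a well-chosen intermediate parameter $p^\star\in[p-q,p]$, and then transfer the resulting concentration to $p$ by exploiting the fact that $|SCC_1|$ is monotone non-decreasing in the edge set of $D_{G_n}$. Fix $\epsilon>0$ small enough that $(\zeta(p)-\epsilon)^2-\epsilon>\alpha$, and use the left-continuity of $\zeta$ at $p$ (hypothesis (i)) to choose $\delta\in(0,q]$ with $\zeta(p')\ge\zeta(p)-\epsilon$ for all $p'\in[p-\delta,p]$. Since $p'\mapsto\mathbb E[f(p')]$ is non-decreasing and takes values in $[0,1]$, its derivative on $[p-\delta,p]$ integrates to at most $1$, so for any $K>1/\delta$ there is $p^\star\in[p-\delta,p]$ with $\frac{d}{dp'}\mathbb E[f(p')]\big|_{p^\star}\le K$. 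Russo's formula (Lemma~\ref{lm: derivative}) then yields $\sum_e\mathbb E|\Delta_e f|\le p^\star(1-p^\star)K=:M$, while Lemma~\ref{lm: bounded influence}, whose hypotheses are guaranteed by (i)--(ii) on $I=[p-\delta,p]$, makes $\max_e\mathbb E|\Delta_e f|\le\eta$ for arbitrary $\eta>0$ once $n$ is large. Together with the trivial bound $|\Delta_e f|\le 1$, these give $\mathcal E_1(f)\le\eta M$ and $\mathcal E_2(f)\le M$, so Lemma~\ref{lm: falik} delivers $\operatorname{var}(f(p^\star))\log(\operatorname{var}(f(p^\star))/\eta M)\le C_{p^\star}M$; sending $\eta\to 0$ forces $\operatorname{var}(f(p^\star))\to 0$.

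To convert this concentration into a useful lower bound on $f(p^\star)$, I would invoke Lemma~\ref{lem:E0fSSC1^2}, which applies because the uniform $\epsilon$-uniqueness of the giant in (ii) yields $\epsilon$-uniqueness at the specific value $p^\star$. Combined with hypothesis (i) at $p^\star$, it gives $\mathbb E[f(p^\star)^2]\ge\zeta(p^\star)^4-o(1)\ge(\zeta(p)-\epsilon)^4-o(1)$. Since $\operatorname{var}(f(p^\star))\to 0$, this forces $\mathbb E[f(p^\star)]\ge(\zeta(p)-\epsilon)^2-o(1)$, and then Chebyshev yields $f(p^\star)\ge(\zeta(p)-\epsilon)^2-\epsilon$ with probability $1-o(1)$. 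The standard monotone coupling placing $D_{G_n}(p^\star)$ inside $D_{G_n}(p)$, together with monotonicity of $|SCC_1|$ under edge addition (adding an edge can only merge SCCs, never split them), then transfers this lower bound to $f(p)\ge(\zeta(p)-\epsilon)^2-\epsilon>\alpha$ with probability $1-o(1)$, as required.

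The principal obstacle is the absence of any \emph{a priori} control on $\mathbb E|SCC_1(p)|$, which rules out applying Falik--Samorodnitsky directly at $p$; the sprinkling device above, which extracts a single $p^\star\in[p-q,p]$ at which Russo's formula converts the bounded variation of $\mathbb E[f]$ into an estimate on $\sum_e\mathbb E|\Delta_e f|$, is the key device. Underpinning this step is Lemma~\ref{lm: bounded influence}, whose proof exploits uniqueness of the unoriented giant to bound the per-edge influence on $|SCC_1|$; this is precisely the place where one must control the delicate mechanism by which a single edge can merge many small SCCs that were previously sitting ``on a directed path without a path back'', and it is what makes $\mathcal E_1$ negligible compared with $\mathcal E_2$ in the concentration inequality.
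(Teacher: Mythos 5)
Your proposal is correct and follows essentially the same route as the paper's proof: locate an intermediate $p^\star(n)\in[p-\delta,p]$ where $\tfrac{d}{dp'}\mathbb E[|SCC_1|]$ is $O(n)$ (since the expectation has bounded variation), convert this via Russo's formula and Lemma~\ref{lm: bounded influence} into bounds on $\mathcal E_1$ and $\mathcal E_2$, apply Lemma~\ref{lm: falik} to kill the variance, feed in the second-moment lower bound of Lemma~\ref{lem:E0fSSC1^2}, and transfer to $p$ by monotonicity. Your treatment of the concentration step is in fact a slight streamlining: by bounding $\mathcal E_1\le\eta M$ and $\mathcal E_2\le M$ directly (using $|\Delta_e f|\le 1$ for the normalized $f$) you avoid the paper's two-case split between Falik--Samorodnitsky and Efron--Stein, and the degenerate case where the logarithm is small is absorbed by $\operatorname{var}(f)\le\mathcal E_1(f)\le\eta M\to0$. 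One small point to tighten: since $p^\star=p^\star(n)$ varies with $n$, you cannot invoke hypothesis (i) ``at $p^\star$'' directly to get $\mathbb E[f(p^\star)^2]\ge\zeta(p^\star)^4-o(1)$; instead use monotonicity of $|SCC_1|$ to bound $\mathbb E[f(p^\star)^2]\ge\mathbb E[f(p-\delta)^2]$ and apply Lemma~\ref{lem:E0fSSC1^2} together with (i) at the fixed endpoint $p-\delta$, exactly as the paper does.
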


\begin{proof}
\newproof{Fix $\epsilon'>0$ such that  $0<\epsilon'<\zeta(p)$.  To prove the lemma, we need to show that
\begin{equation}\label{SCC1-lower-bd}
    \mathbb{P}_{D_G(p)}\Big(|SCC_1|\geq (\zeta(p)-\epsilon')^2 n\Big)\rightarrow 1.
\end{equation} 
By the continuity of $\zeta$ at $p$ one can find  $q'<q$  such that $|\zeta(p-q')- \zeta(p)|\leq \frac{\epsilon'}4$, implying in particular that $\zeta(p-q')>0$. 

Let
$I=[p-q',p]$, and}
{let $m$ be twice the number of edges in $G_n$, i.e., let $m$ be the number of possible,
oriented edges in $D_{G_n}(p)$.}
We first use Lemma \ref{lm: falik} to prove that given  $\epsilon>0$, {there exists $N>0$ such that} the following holds for 
all$n>N$  and all \newproof{$p'\in I$}
\begin{equation}\label{var-SCC-bd}
    \operatorname{var}_{D_{G_n}(p')}{(|SCC_1|)}\leq \epsilon n\frac{d}{dp'}\mathbb{E}_{D_{G_n}(p')}|SCC_1|.
\end{equation}
To prove \eqref{var-SCC-bd}, we consider two cases based on whether $\frac{\mathcal{E}_2(SCC_1)}{\mathcal{E}_1(SCC_1)}$ is larger or smaller than 
$M=e^{1/\epsilon}$.

\noindent\textit{Case 1}: For $M$ defined as above $\mathcal{E}_2(SCC_1)\geq M \mathcal{E}_1(SCC_1)$.
We claim that this assumption implies that
\begin{equation}
\label{var-bd-case1}
\operatorname{var}_{D_G(p')}(|SCC_1|)\leq\frac{\log\frac{1-p'}{p'}}{(1-2p')} \frac{\mathcal{E}_2(SCC_1)}{\log(\frac{M}{\log M})}.\end{equation}
To see this, assume that $\operatorname{var}_{D_G(p')}(|SCC_1|)\geq \frac{\mathcal{E}_2(SCC_1)}{\log(M)}$ (otherwise \eqref{var-bd-case1} holds 
{by the fact that $1\leq 2\leq \frac{\log\frac{1-p'}{p'}}{1-2p'}$} and $\frac 1{\log M}\leq \frac 1{\log(M/\log M)}$). Then 
$$
\frac{\operatorname{var}(|SCC_1|)}{\mathcal{E}_1(SCC_1)}\geq \frac{M \operatorname{var}(|SCC_1|)}{\mathcal{E}_2(SCC_1)}\geq\frac{M}{\log M}
$$
and \eqref{var-bd-case1} follows by Lemma  \ref{lm: falik}.

Next note that $|SCC_1|\leq n$ implying that   $\mathcal{E}_2(f)\leq n \sum_e\mathbb{E}_S|\Delta_eSCC_1|$.  Combined with \eqref{var-bd-case1} and Lemma \ref{lm: derivative} we conclude that
\[\operatorname{var}_{D_G(p')}(|SCC_1|)\leq 
\frac{p'(1-p')\log\frac{1-p'}{p'}}{(1-2p')}\frac{n}{\log(\frac{M}{\log M})}\frac{d}{dp'} \mathbb{E}_{D_G(p')}|SCC_1|.\]
It is easy to see that the first quotient is bounded by $1/2$  (e.g., expanding both the numerator and denominator around $p'=1/2$ and comparing the derivatives).  Therefore
\[\frac{p'(1-p')\log\frac{1-p'}{p'}}{(1-2p')\log(\frac{M}{\log M})}\leq \frac 1{2\log(\frac{M}{\log M})}\leq \epsilon,\]
 proving \eqref{var-SCC-bd} for Case 1.

\noindent \textit{Case 2}: For $M$ defined as above, $\mathcal{E}_2(SCC_1)\leq M\mathcal{E}_1(SCC_1)$. 
In this case, we will use the 
the Efron-Stein inequality to bound the variance by
 \[\operatorname{var}_{D_{G_n}(p')}(|SCC_1|)\leq\frac{1}{2}\mathcal{E}_2(SCC_1)
 \leq \frac M2 \mathcal{E}_1(SCC_1).\]
But in this case, the bound $\mathcal{E}_2(f)\leq n \sum_e\mathbb{E}_S|\Delta_eSCC_1|$ is not strong enough to
complete the proof of \eqref{var-SCC-bd}.  To overcome this, {we use }  Lemma \ref{lm: bounded influence}, which implies that for any constant $\epsilon>0$, and {large enough $n$}, $\mathbb{E}{|}\Delta_e SCC_1{|}\leq \epsilon n/{M}$ for all edges $e$ and all \newproof{$p'\in {I}$}.
As a result,
\begin{align*}
\mathcal{E}_1(SCC_1)
&=\sum_{e\in [m]}\left( \mathbb{E}_{D_{G_n}(p')}[\Delta_e SCC_1]\right)^2\\
&\leq \frac{\epsilon n}M\sum_{e\in[m]} \mathbb E_{ D_{G_n}(p')} |\Delta_e SCC_1|=\frac{\epsilon}M p'(1-p')n \frac{d}{dp'}\mathbb E_{ D_{G_n}(p')}|SCC_1|,
 \end{align*}
 resulting in 
  \[\operatorname{var}_{D_{G_n}(p')}(|SCC_1|)\leq \epsilon {\frac{p'(1-p')}2 }n\frac{d}{dp'}\mathbb E_{ D_{G_n}(p')}|SCC_1|, \]
{Since $p'(1-p')\leq 2$,}  we get {\eqref{var-SCC-bd}} in the second case as well.

We are now ready to prove \eqref{SCC1-lower-bd}.
Given $\epsilon>0$, let $ N$ 
be such that for $n\geq  N$ the bound
\eqref{var-SCC-bd} holds for all $p'\in {[p-q',p]}$. 
We claim that given $n\geq  N$ there exists  $p'=p'(n)\in {[p-q',p]}$
\[ \frac{d}{dp'} \mathbb{E}_{D_{G_n}(p')}|SCC_1|\leq 2n/q'. \]
 Indeed, assume this is not the case, then by the fundamental theorem of calculus,
$$
\mathbb{E}_{D_{G_n}(p)}|SCC_1|\geq 2n +\mathbb{E}_{D_{G_n}(p-{q'})}|SCC_1|,
$$
which is a contradiction, since $|SCC_1|\leq n$ with probability $1$.
Then by \eqref{var-SCC-bd} for $n\geq  N$ and 
{$p'=p'(n)\in [p-q',p]$}
  \[\operatorname{var}_{D_{G_n}(p')}(|SCC_1|)\leq \frac{2\epsilon}{{q'}} n^2.\]
\newproof{  Next we use Lemma~\ref{lem:E0fSSC1^2} 
  together with convergence of $|C_1|/n$ to conclude that for all
  $\epsilon''>0$ with
   $\epsilon''<\zeta(p-q)$ there exists an $N'<\infty$ such that for $n\geq N'$,
$$
\mathbb{E}_{D_{G_n}(p')}(|SCC_1|^2)
\geq \mathbb{E}_{D_{G_n}(p-q')}(|SCC_1|^2)
\geq \left(\zeta(p-q')-\epsilon''\right)^4n^2.
$$
Choosing $\epsilon''=\frac{\epsilon'}4$ and using that 
$|\zeta(p)-\zeta(p-q')|\leq \epsilon'/4$, we thus have}
$$
\mathbb{E}_{D_{G_n}(p')}(|SCC_1|^2)
\geq\bigg(\zeta(p) - \frac{\epsilon'}2\bigg)^4 n^2,
$$
for all $n\geq N'$.
With this lower bound on the expectation of $|SCC_1|^2$ and the variance bound,
\begin{equation}\label{eq: low bnd E[SCC1]}
\bigg(\mathbb{E}_{D_{G_n}(p')}(|SCC_1|)\bigg)^2\geq
\bigg(\zeta(p) - \frac{\epsilon'}2\bigg)^4 n^2
-\frac{2\epsilon}{{q'}} n^2
\end{equation}
and, by 
Chebyshev's inequality,
\[\mathbb{P}_{D_{G_n}(p')}\Big(\Big|\frac{|SCC_1|}{n}-\mathbb{E}[\frac{|SCC_1|}{n}]\Big|\geq \frac 14\epsilon'  \Big)\leq\frac{32\epsilon}{{q'}\epsilon'}.\]
We therefore have shown that given $\epsilon>0$ small enough (depending on ${q'}$, $\epsilon'$ and $\zeta(p)$) there exists $\tilde N<\infty$ such that for all $n\geq \tilde N$ there exists $p'=p'(n)\in {[p-q',p]}$ such that
\[
\mathbb{P}_{D_{G_n}(p')}\Big(\frac{|SCC_1|}{n}\geq \Big(\zeta(p) - \epsilon'\Big)^2 \Big)\leq\sqrt\epsilon.
\]
Since $|SCC_1|$ is increasing in $p$ this implies that for all sufficiently small
$\epsilon>0$
there exists an $\tilde N'<\infty$ such that
\[
\mathbb{P}_{D_{G_n}(p)}\Big(\frac{|SCC_1|}{n}\geq \Big(\zeta(p) - \epsilon'\Big)^2 \Big)\leq\sqrt\epsilon,
\]
for all $n\geq \tilde N'$.  This proves \eqref{SCC1-lower-bd}.
\end{proof}
Now, we are ready to proceed with the proof  of  Theorem \ref{thm: main structure SI}.

\begin{proof}[\textbf{Proof of Theorem \ref{thm: main structure SI}}]


Part \ref{part: thm2 SCC2} follows by the assumption of the theorem on uniquness of the second largest component: we know that for all $\epsilon>0$, $G_n$ has a
uniformly $\epsilon$-unique giant, and  Part \ref{part: thm2 SCC2} follows by  Lemma \ref{lm: unique SCC}. 

Next, to prove Part \ref{part: thm2 subcritical}, Corollary \ref{cor:pc-lower-bd} implies that for all  $\epsilon>0$,
\begin{align*}
 \mathbb P(\text{there exists $\geq \epsilon n$ vertices $v$ with }|C^+(v)|\geq \epsilon n)\to 0.
\end{align*}\newproof{
As a result  for a uniform random vertex $v$, $\frac{|C^+(v)|}{n}\overset{\mathbb P}{\to}0$. A similar argument implies the statement  for fan-ins. Furthermore, in any instance of $D_{G_n}(p)$ {with} $|SCC_1|\geq\epsilon n$ the probability that a uniform random node has fan-out larger than $\epsilon n$ is at least $\epsilon$. So, $\frac{|SCC_1|}{n}$ must also converge to $0$ in probability.}

\newproof{Next we prove Part~\ref{part: thm2 bowtie}. The statement $$\liminf_{n\to\infty}\frac 1n{\mathbb E}[|SCC_1|]\geq \zeta^2(p)$$ follows {from}  Equation \eqref{eq: low bnd E[SCC1]}. Further, in Lemma~\ref{lm: variance} we showed that for any fixed $\epsilon>0$, $$\limsup_{n\to\infty} \mathbb{P}_{D_{G_n}(p)}\Big(\Big|\frac{|SCC_1|}{n}-\mathbb{E}[\frac{|SCC_1|}{n}]\Big|\geq  \epsilon  \Big)=0,$$
which implies $
\frac{|SCC_1|}{{\mathbb E}[|SCC_1|]}\overset{\mathbb{P}}{\to} 1$.

Next, {choose} $v\in V(G_n)$ 
uniformly at random. {We will prove that}
either $v\notin SCC_1^-$ and $|{C}^+(v)|=o(n)$ or $v\in SCC_1^-$ and $|C^+(v)\Delta SCC_1^+|=o(n)$.} {Recalling the definition of 
the sets $\tilde O_\epsilon$ and $O_\epsilon$ from 
Lemma \ref{lm: L+ to SCC}, the statements of the lemma then imply that 
 \[\frac{|O_\epsilon|}{n}\rightarrow 0
 \qquad\text{and}\qquad \frac{|\tilde O_\epsilon|}{n}\rightarrow 0\]
in probability.
The second statement implies that if a random vertex $v$
does not fall into $SCC_1^-$ (which is equivalent to
$C^+(v)\cap SCC_1^+=\emptyset$), with high probability
its fan-out has $o(n)$ vertices, proving $|C^+(v)|=o(n)$ for this case. If a random vertex $v$ falls into $SCC_1^-$, by the first statement of Lemma \ref{lm: L+ to SCC}, we know that when considering the induced  subgraph on the complement of $SCC_1^+$, the fan-out of $v$ is of size at most $o(n)$.
But the fan-out of $v$ in this induced subgraph is nothing 
but $C^+(v)\setminus SCC_1^+=C^+(v)\Delta SCC_1^+$,
proving $|C^+(v)\Delta SCC_1^+|=o(n)$ for the fan-out of $v$.
}
The same argument works by symmetry for fan-ins.

The rest of this proof is dedicated to convergence of the relative size of $SCC_1^+$ and $SCC_1^-$.
Since 
$\zeta(p)>0$,  there exists some $\alpha>0$ such that with high probability $|SCC_1|\geq \alpha n$. 
Since any node in $SCC_1$ has $SCC_1^+$ in their fan-out, for small enough $\epsilon>0$,
\[\mathbb P(\frac{|SCC_1^+|}{n}\geq\zeta(p)+\epsilon)\leq \mathbb P(|L^+_{\zeta(p)+\epsilon}|\geq \alpha n).\]
 Then by  part \ref{lm5: part1} of Lemma \ref{lem: structure lemma},
 \[\mathbb P(\frac{|SCC_1^+|}{n}\geq\zeta(p)+\epsilon)\rightarrow 0.\]
 
To prove the lower bound assume to the contrary that there exists $\delta>0$ such that  for infinitely many $n$,
\[\mathbb P(\frac{|SCC_1^+|}{n}\leq\zeta(p)-\epsilon)\geq \delta.\]
 For any vertices $u,w\in SCC_1$ note that  $C^+(w)=C^+(u)=SCC_1^+$. Since  $|SCC_1|\geq \alpha n$, with probability greater than $\alpha$ a random node lies in $SCC_1$. Therefore for a random node $v$
 \[\mathbb P_{D_{G_n}(p)}(\alpha \leq \frac{|C^+(v)|}{n}\leq \zeta(p)-\epsilon )\geq \alpha \delta.\]
By Lemma \ref{lem: coupling},
\[
\mathbb P_{G_n(p)}(\alpha \leq \frac{|C(v)|}{n}\leq \zeta(p)-\epsilon )\geq \alpha\delta,
\]  contradicting the bounds in Proposition~\ref{prop: lower bound giant}.
 Therefore, we must have
 \[\mathbb P(\frac{|SCC_1^+|}{n}\leq\zeta(p)-\epsilon)\rightarrow 0.\]
 Since $\epsilon>0$ was arbitrary we get the result. By symmetry the same holds for $SCC_1^-$.
 
\end{proof}

\section{Applications to Preferential Attachment Graphs}\label{sec: pref attachment}
 As an application of  our method to power law graphs, we consider percolation on preferential attachment graphs. 
Here we consider the following version of preferential attachment, which closely follows the original formulation by Barab\'asi and Albert \cite{Barabasi}.  The model has a parameter $m\in \mathbb N$, and is defined as follows.  Starting from a connected  graph  
$G_{t_0}$
on at least
$m$ vertices, a random graph $G_t$ is defined inductively: given $G_{t-1}$ and its degree sequence $d_i(t-1)$, we form a 
new graph by adding one more vertex, $v_t$, and connect it
to $m$ distinct  vertices $w_1,\dots,w_m\in V(G_{t-1})$ by first choosing $w_1,\dots,w_m\in V(G_{t-1})$ {i.i.d with distribution $\mathbb P(w_s=i)=\frac {d_i(t-1)}{2|E(G_{t-1})|}$, $s=1,\dots m$,} and then conditioning on all vertices being distinct (thus avoiding  multiple edges).  {While all our results hold for arbitrary connected starting graphs on at least $m$ vertices, it will be  notationally convenient to choose $G_{t_0}$ is such a way that at time $t\geq t_0$, the graph has {$t$} vertices and {$mt$}
 edges.  For concreteness, we choose 
$G_{t_0}$ to be the graph $K_{2m+1}$, the complete graph on $t_0=2m+1$ vertices.  We denote the resulting random graph sequence by $({P{\hskip-.2em}A}_{m,n})_{n\geq 2m+1}$, and following \cite {berger2014}, we call the version of preferential attachment we defined above the \emph{conditional model}, while the model where the conditioning step is left off will be called the independent model.}

There are several papers establishing that the percolation threshold is $0$ for variants of this problem, see, e.g., \cite{bollobas2003} for site percolation on a different preferential attachment model that allows multiple edges and self-loops, and \cite{Dereich11,Dereich2013RandomNW} for bond percolation on  what is called Bernoulli preferential attachment\footnote{In this model, the number of new edges is not specified, but instead is a random variable which is sum of $n$ Bernoulli random variables skewed towards higher degrees.} in \cite{hofstadVol1}.  {Note that 
the results of 
 \cite{bollobas2003} give{s} an easy proof that $p_c=0$ for 
 bond percolation on the Bollobas-Riordan version of  preferential attachment models as well.  All one needs to observe is that bond percolation with probability $p$ gives a stochastic upper bound on site percolation with probability  $p'=p^m$ (take a bond percolation configuration, and delete all vertices for which at least one of the $m$ initial edges is absent).  This does not quite give a proof for the conditional model  $({P{\hskip-.2em}A}_{m,n})_{n\geq 2m+1}$ considered here since the two models differ in minor technical details, but more importantly, we  (a) want to demonstrate the power of the methods developed in this paper, establishing this result from scratch, and (b) we will be able to obtain sharper bound on the relative size of the largest cluster.}


Before {stating} the theorem, we  point out that the sequence  $\{{P{\hskip-.2em}A}_{m,n}\}_{{n\geq m}}$ converges locally in probability  to a P\'olya-point process \cite{berger2014, RemcoVol2}.  As we will see, the robustness of ${P{\hskip-.2em}A}_{m,n}$ then reduces to the robustness of  P\'olya-point processes to bond-percolation. Specifically, the relative size $\zeta(p)$ appearing in the next theorem is the survival probability of the P\'olya-point processes after bond-percolation, and the statement that $p_c=0$ for preferential attachment reduces to the statement that $\zeta(p)>0$ for all $p>0$.

\begin{theorem}\label{thm: pref-attachment}
Let $m\geq 2$,   for a positive integer {$n\geq 2m+1$} let ${P{\hskip-.2em}A}_{m,n}$ be the conditional preferential attachment graph defined  above, let $p\in [0,1]$, 
and let $C_1$ and $C_2$ be the
the largest and second largest connected component in ${P{\hskip-.2em}A}_{m,n}(p)$, respectively.  Then the following limits exist
\[\frac{|C_1|}{n}\overset{\mathbb{P}}{\to}\zeta(p)
\quad\text{and}\quad \frac{|C_2|}{n}\overset{\mathbb{P}}{\to}0,
\]
where  $\zeta:[0,1]\to [0,1]$ is a continuous function with {$\zeta(p)=e^{-\Theta(1/p)}$ as $p\to 0$ and $\zeta(1)=1$.} 
So in particular, the largest component in ${P{\hskip-.2em}A}_{m,n}(p)$ has linear size for all $p>0$, showing that $p_c=0$.
\end{theorem}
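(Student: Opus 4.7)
The strategy is to reduce the theorem to an application of Theorem~\ref{thm: size of giant in expander} (together with Remark~\ref{rem:zeta-cont}) to the sequence $\{{P{\hskip-.2em}A}_{m,n}\}$, and then to separately analyze the quantity $\zeta(p)$ on the limiting rooted graph. Because the theorem is a direct instantiation of the locality framework developed in the earlier sections, the bulk of the work lies in verifying the hypotheses, and in extracting the quantitative asymptotics near $p=0$.

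First I would verify the three hypotheses of Theorem~\ref{thm: size of giant in expander}. Bounded average degree is immediate: by construction of the conditional model starting from $K_{2m+1}$, the graph ${P{\hskip-.2em}A}_{m,n}$ has exactly $mn + O(1)$ edges, so the average degree is $2m + o(1)$. Local convergence in probability of ${P{\hskip-.2em}A}_{m,n}$ to the (non-random) P\'olya-point process $\mu$ is known (see \cite{berger2014} and \cite{RemcoVol2}), so I would simply cite this. The non-trivial hypothesis is the large-set expansion of ${P{\hskip-.2em}A}_{m,n}$ with bounded average degree; this is the content of the dedicated Appendix~\ref{sec: expansion pref attachment}, and would be invoked here. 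Given these three inputs, Theorem~\ref{thm: size of giant in expander} yields $|C_1|/n \overset{\mathbb P}{\to}\zeta(p)$ for all continuity points of $\zeta$ and $|C_2|/n \overset{\mathbb P}{\to}0$ uniformly on closed subintervals of $(0,1)$.

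Next I would address continuity of $\zeta$ on all of $[0,1]$ and the boundary values. Since the P\'olya-point process is $\mu$-a.s.\ an infinite tree with at least three disjoint infinite paths (this follows from $m\geq 2$ and the fact that the root has in-degree and out-degree components each producing infinite descent with positive probability), Remark~\ref{rem:zeta-cont} gives continuity of $\zeta$ at $p_c(\mu)$ as well, so the convergence $|C_1|/n\to\zeta(p)$ holds for every $p\in[0,1]$. The value $\zeta(1)=1$ follows from the fact that ${P{\hskip-.2em}A}_{m,n}$ is connected by construction, so $|C_1|/n=1$ deterministically at $p=1$. It remains to establish $p_c(\mu)=0$ together with the quantitative bound $\zeta(p)=e^{-\Theta(1/p)}$ as $p\to 0$, which is purely a statement about percolation on the limit $\mu$ and independent of the graph sequence.

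The core analytic obstacle, which I expect to be the hardest step, is the two-sided bound $e^{-C/p}\leq \zeta(p)\leq e^{-c/p}$ for some constants $0<c<C<\infty$. The plan is to exploit the power-law structure of the P\'olya-point tree: offspring degrees decay polynomially, so that with probability roughly $e^{-\Theta(1/p)}$, exploration along a short path from the root reaches a vertex of degree at least $\Theta(1/p)$, from which percolation with retention $p$ deposits $\Theta(1)$ surviving neighbors in expectation. For the lower bound, I would show that a sufficiently high-degree ancestor in the P\'olya tree survives and spawns an infinite subtree with probability bounded below by a constant via a comparison with a supercritical Galton--Watson process; the cost of reaching such an ancestor from a typical root is the $e^{-\Theta(1/p)}$ factor. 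For the upper bound, I would use that survival requires at least one long path whose edges all survive, and reducing to a sub-critical branching comparison in the regime of degrees below $\Theta(1/p)$ gives the matching $e^{-c/p}$ upper bound. These two bounds are deferred to Appendix~\ref{sec: appendix polya point}, and yield both $p_c(\mu)=0$ and the claimed stretched-exponential behavior, completing the proof.
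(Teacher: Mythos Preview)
Your reduction to Theorem~\ref{thm: size of giant in expander} and the verification of its three hypotheses (bounded average degree, local convergence in probability to the P\'olya-point process, large-set expansion via Appendix~\ref{sec: expansion pref attachment}) is exactly what the paper does. The small differences are in the auxiliary arguments: the paper first establishes $p_c(\mu)=0$ from the lower bound in Proposition~\ref{cor: lower bnd zeta pref-attach}, and then invokes Corollary~\ref{cor: zeta-cont} to get continuity of $\zeta$ on $(0,1]$, rather than the three-ends route from Remark~\ref{rem:zeta-cont} that you propose. Both are valid here (for $m\geq 2$ the P\'olya-point tree contains a deterministic $m$-ary subtree through left children, so it certainly has three ends), but the paper's route sidesteps that verification entirely. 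For $\zeta(1)=1$, the paper argues via a comparison with percolation on the deterministic $m$-ary left-child subtree, whereas you use connectivity of ${P{\hskip-.2em}A}_{m,n}$ and the convergence statement; both work once $p_c\neq 1$ is known.

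The one place where your proposal genuinely diverges from the paper is the $e^{-\Theta(1/p)}$ bound. The paper does \emph{not} argue via reaching high-degree vertices and Galton--Watson comparisons. Instead, it derives an explicit fixed-point equation $\rho=\Phi\rho$ for the type-dependent survival probability $\rho(S,x)$ on the P\'olya-point tree (Proposition~\ref{prop: fixed point Phi}), shows $\rho$ is the maximal fixed point obtained as the decreasing limit of $\Phi^k 1$, and then sandwiches $\rho_k$ between explicit ansatz functions of the form $1-(1+\epsilon_k^\pm/x)^{-(m\pm 1)}$. This reduces the problem to a one-dimensional recursion for the scalars $\epsilon_k^\pm$, whose fixed points satisfy equations of the type $1\asymp p\log(1/\epsilon)$, yielding $\epsilon^\pm=e^{-\Theta(1/p)}$ and hence the matching bounds on $\zeta(p)$ (Proposition~\ref{cor: lower bnd zeta pref-attach}). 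Your proposed probabilistic approach is plausible at the level of intuition, but it is not clear it produces the sharp $e^{-\Theta(1/p)}$ form without effectively reproducing this fixed-point analysis; in particular, the claim that ``reaching a degree-$\Theta(1/p)$ vertex costs $e^{-\Theta(1/p)}$'' is the crux and is not justified by your sketch. The paper's analytic route is both more direct and gives explicit constants in the exponent.
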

{The theorem will follow from Theorem \ref{thm: size of giant in expander} once we establish}
(1) large-set expansion of ${P{\hskip-.2em}A}_{m,n}$, 
(2) {continuity of the survival probability $\zeta(p)$ of the P\'olya-point processes after bond-percolation}, and (3)
{the bounds $\zeta(p)\to 1$ as $p\to 1$ and
$\zeta(p)=e^{-\Theta(1/p)}$ as $p\to 0$.}

Theorem 1 in \cite{MIHAIL2006239}, shows {positive edge expansion for a different version} of preferential attachment. Following a similar argument, we prove in Appendix \ref{sec: expansion pref attachment} that ${P{\hskip-.2em}A}_{m,n}$ has positive large set expansion, {which is weaker than the expansion established in \cite{MIHAIL2006239}, but sufficient for our purpose.}
\begin{lemma}\label{lem: pref-attach expansion}
Let $m\geq 2$ and $n\geq m$, and let ${P{\hskip-.2em}A}_{m,n}$ be defined as above. Then there exists some $\alpha>0$ such that for any $\epsilon\in(0,1/2)$ 
and all large enough $n$, 
$\{{P{\hskip-.2em}A}_{m,n}\}_{n\geq m}$ is an $(\alpha,\epsilon, 2m)$ large-set expander with probability $1-\epsilon$.
\end{lemma}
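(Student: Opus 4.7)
The plan is to reduce large-set expansion to a uniform upper bound on the internal edge count of subsets and then prove that upper bound via a union bound argument modeled on Mihail, Papadimitriou and Saberi \cite{MIHAIL2006239}. The reduction uses the basic structural property that every vertex $v_t$ born after the initial clique $K_{2m+1}$ contributes exactly $m$ backward edges, giving $|E(A)| \leq m|A| + O(1)$ and $|E(V\setminus A)| \leq m|V\setminus A|+O(1)$ for every $A \subseteq V({P{\hskip-.2em}A}_{m,n})$. Since $|E({P{\hskip-.2em}A}_{m,n})| = mn + O(1)$, we get
\[
e(A, V\setminus A) \;=\; mn - |E(A)| - |E(V\setminus A)| + O(1) \;\geq\; \delta |A| - O(1),
\]
whenever $|E(A)| \leq (m-\delta)|A|$. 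Thus it suffices to show that with probability at least $1-\epsilon$, every $A$ with $\epsilon n \leq |A| \leq n/2$ satisfies $|E(A)| \leq (m-\delta)|A|$ for some $\delta=\delta(\epsilon,m)>0$; the small $O(1)$ correction is absorbed by a slight reduction of the constant $\alpha$.

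For a fixed $A$, decompose $|E(A)| = O(1) + \sum_{t>t_0}\mathbf{1}[v_t\in A]\,X_t$, where $X_t\le m$ counts how many of $v_t$'s $m$ backward edges land in $A_{t-1}:=A\cap V(G_{t-1})$. Conditional on the past and on $v_t\in A$, each such edge lands in $A_{t-1}$ with probability $q_t = \deg(A_{t-1})/(2m(t-1))$. Using the trivial bound $\deg(\bar A_{t-1})\ge m|\bar A_{t-1}|$ (every vertex has degree at least $m$), I obtain $q_t \le 1 - |\bar A_{t-1}|/(2(t-1))$. Listing the vertices of $A$ in birth order $v_{S_1}<\cdots<v_{S_{|A|}}$, this gives $q_{S_i} \le 1 - c_0/(2(1+c_0))$ whenever $S_i\ge(1+c_0)i$, so such vertices contribute at most $(m-c_1)$ internal edges each in expectation. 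Hence for any $A$ having a constant fraction of indices $i$ with $S_i\ge(1+c_0)i$ we have $\mathbb{E}[|E(A)|]\le(m-c_2)|A|$. ``Early-heavy'' $A$ for which this dichotomy fails are handled by the symmetric argument applied to $\bar A$: such an $A$ is concentrated on early vertices, which forces $\bar A$ to be late-heavy in the corresponding sense, and the same analysis yields $\mathbb{E}[|E(\bar A)|]\le (m-c_2)|\bar A|$, which by the identity in the first paragraph again produces the required boundary lower bound.

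For concentration I would apply the Doob martingale obtained by exposing the neighborhoods of $v_{t_0+1},\ldots,v_n$ one vertex at a time; each exposure changes $|E(A)|$ (or $|E(\bar A)|$) by at most $m$, so Azuma--Hoeffding gives $\Pr[|E(A)|\ge\mathbb{E}[|E(A)|]+\lambda]\le\exp(-\lambda^2/(2m^2 n))$. Taking $\lambda=(c_2/2)|A|\ge(c_2 \epsilon/2) n$ yields an $\exp(-c_3 n)$ deviation bound for some $c_3=c_3(\epsilon,m)>0$, which beats the $2^n$ union bound over all subsets provided $c_3>\log 2$; this can be arranged by choosing $\alpha$ (and hence $\delta$) sufficiently small. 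The conditional sampling built into the definition of ${P{\hskip-.2em}A}_{m,n}$ (distinct endpoints) is absorbed by coupling with the independent preferential attachment model in which collisions are simply allowed; since $m$ is fixed, the collision probability at step $t$ is $O(1/t)$, so the conditioning inflates failure probabilities by at most a uniformly bounded factor.

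The main obstacle is the delicate borderline case $|A|\approx n/2$, where neither $A$ nor $\bar A$ is obviously late-heavy and the simple dichotomy above becomes ambiguous. Following \cite{MIHAIL2006239}, one resolves this either by a joint analysis of the birth-time sequences of $A$ and $\bar A$, or by a refined entropy accounting in the union bound that exploits the fact that the early portion of $A$ can be specified by $O(\log n)$ bits per vertex. One must also verify that the constants $c_0,c_1,c_2,c_3$ can be chosen uniformly in $\epsilon\in(0,1/2)$, which reduces to observing that the bounds above depend on $A$ only through the ratio $|A|/n$ and the fixed parameter $m$; the resulting $\alpha$ will depend on $m$ but not on $\epsilon$, as required by the statement of the lemma.
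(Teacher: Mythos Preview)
Your approach has a quantitative gap that cannot be repaired by tuning $\alpha$. The Azuma--Hoeffding step gives a failure probability of $\exp(-\lambda^2/(2m^2 n))$ with $\lambda=(c_2/2)|A|$, so for $|A|=k$ the bound is $\exp(-c_2^2 k^2/(8m^2 n))$. You then union-bound over the $\binom{n}{k}$ sets of size $k$. At $k=n/2$ this requires $c_2^2/(32m^2)>\log 2$, i.e.\ $c_2>m\sqrt{32\log 2}\approx 4.7m$; but $c_2$ is the deficit in the expectation of $|E(A)|/|A|$ below $m$, so necessarily $c_2\le m$. Making $\alpha$ (hence $\delta$) smaller only relaxes the target $|E(A)|\le(m-\delta)|A|$, which means you need a \emph{smaller} deviation $\lambda$ and hence a \emph{weaker} Azuma bound, not a stronger one. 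The concentration rate $e^{-\Theta(k^2/n)}$ that a bounded-difference martingale delivers is simply not enough to beat $\binom{n}{k}$ here; this is precisely why \cite{MIHAIL2006239} and the paper do not use martingale concentration but rather compute directly the probability that a prescribed collection of edges are all non-cut.

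There are two secondary problems. First, the Lipschitz bound ``exposing one vertex changes $|E(A)|$ by at most $m$'' is not justified: in preferential attachment, changing the endpoints of the $m$ edges created at time $t$ alters the degree sequence and hence all subsequent attachment probabilities, so the Doob martingale increments are not obviously bounded by $m$ without a nontrivial coupling argument. Second, the claim that the conditioning on distinctness inflates probabilities ``by at most a uniformly bounded factor'' is false: the collision probability at step $t$ scales like $M_t/t$ where $M_t$ is the current maximum degree, and the product $\prod_t(1-O(M_t/t))^{-1}$ diverges. The paper handles this by proving $M_t\le t^{7/8}$ with high probability and absorbing the resulting $C^{n^{7/8}\log n}$ inflation into the exponential gain coming from large-set (rather than all-set) expansion.

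The paper's actual argument, following \cite{MIHAIL2006239}, fixes $S$ together with a candidate set $A$ of at most $k_\alpha=\lceil\alpha k\rceil-1$ edge-indices that are allowed to be cut, and bounds the probability that \emph{every} remaining edge has both endpoints on the same side of $S$. Tracking the evolution of $d_S(t)$ along the arrival sequence yields a telescoping product that simplifies to ${mk\choose k_\alpha}\big/{mn-k_\alpha\choose mk-k_\alpha}$ times the subexponential conditioning factor; this combinatorial bound is sharp enough that after multiplying by $\binom{n}{k}\binom{mn}{k_\alpha}$ one is left with $\big((em/\alpha)^{2\alpha}2^{-(m-1-2\alpha)}\big)^k$, which is exponentially small for $\alpha=(m-1)/20$.
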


{To continue, we will use the
explicit construction of the local  limit of preferential attachment in \cite{berger2014}, which gives what the authors call a 
 P\'olya-point graph or process.}
This graph is a random rooted tree, where vertices 
have types $(S,x)\in \{\emptyset,R,L\}\times [0,1]$, where the discrete label is $S=\emptyset$ for the root, and right (R) or left (L) for all other vertices.  We will refer to the continuous label $x$ as the ``position'' of a point in the P\'olya-point graph.
The root, with type $(\emptyset,x)$, has a random position $x=\sqrt{y}$
where $y$ is drawn uniformly at random from $[0,1]$. For a vertex of type $(S,x)$ define \[   m(S)= 
\begin{cases}
   m,& \text{if } S= L \text{ or } S=\emptyset\\
 m-1,              & \text{if } S=R.
\end{cases}
\]
For a vertex of type $(S,x)$ the off-springs are generated as follows.
\begin{itemize}
\item Each such vertex has a deterministic number $m(S)$ of children  of type $(L,x_i)$, $i=1,\dots,m(S)$, {where} $x_1,\ldots, x_m(S)$ are chosen i.i.d uniformly at random from $[0,x]$.

\item In addition, it has   $N\sim \operatorname{Poi}(\gamma \frac {1-x}x)$  right children, where  
$\gamma\sim\Gamma(m+1,1)$ if $S=L$,
and $\gamma\sim\Gamma(m,1)$ if $S\in\{R,\emptyset\}$. Given $N$, the right children have type
$(R,y_1),\ldots, (R,y_N)$,
where 
 $y_1,y_2,\ldots,y_N$ are chosen i.i.d uniformly at random from $[x,1]$.
\end{itemize}

\label{back}
\noindent

Next we discuss how to compute $\zeta(p)$ for the  P\'olya-point graph.  To this end,
we 
derive the implicit formula for the survival probability $\rho(S,x)$ of the tree under a node of type $(S,x)$ after percolation, with $(S,x)\in \{L,R\}\times [0,1]$.  Let $d_R^p$ and $d_L^p$ be the random number of right and left children of such a node after percolation.  
Intuitively, the extinction probability $1-\rho(S,x)$ is equal to the probability that all of its children do not appear in an infinite cluster.    Taking first the expectation over the positions of these children and then over the number of left and right children after percolation
will  give an  implicit equation for $\rho(S,x)$, showing that $\rho(S,x)$ is a solution of
\begin{equation}\label{eq: fixed point}
    (\Phi f)(S,x)=f(S,x),
\end{equation}
where {$S\in\{L,R\}$, and}
\begin{equation}
\label{phi-def}(\Phi f)(S,x)=1-\frac{x\Big(x-p\int_0^xf(L,y)dy\Big)^{m(S)}}{\Big(x+p\int_x^1f(R,y)dy\Big)^{m(S)+1}},\end{equation}
see Appendix~\ref{sec: appendix polya point} for the derivation of \eqref{eq: fixed point}.  As is typical for implicit equations for survival probabilities in branching processes, the above equation has a trivial solution $f(x,S)\equiv 0$, raising the question of whether there exist other solutions, and if so, which one is the survival probability $\rho(S,x)$.  As also typical, $\rho(S,x)$ will be the maximal solution, which here means the point-wise maximum over all solutions.  The exact statement is given in Proposition~\ref{prop: fixed point Phi} below, whose proof is also given in Appendix~\ref{sec: appendix polya point}.  

Before stating the proposition, we note that
once we know  $\rho(S,x)$ for all vertices of discrete type $L$ or $R$, we can calculate the survival probability for the root in exactly the same way, except that we now also need to integrate over the position $x$ of the root, which we recall is equal to $\sqrt y$ where $y$ is uniform in $[0,1]$.  This leads to the equation

\begin{equation}\label{eq: zeta PA}
\quad\zeta(p)=\int_0^1  ({\Phi} \rho)(\emptyset,\sqrt y) dy,\quad\text{where}\quad
({\Phi} f)(\emptyset,x)=
1-\Big(\frac{x-p\int_{0}^{x}f(L,z)dz}{x+p\int_{x}^{1}f(R,z)dz}\Big)^m,
\end{equation}
see again Appendix~\ref{sec: appendix polya point} for the proof. Note that we extended the domain of $\Phi$ in \eqref{phi-def} to $(S,x)\in\{\emptyset,L,R\}\times[0,1]$.
To formulate Proposition~\ref{prop: fixed point Phi}, we introduce one more quantity, 
the probability that a node of type $(S,x)$ reaches level $k$ after percolation with probability $p$, a quantity we denote by 
$\rho_k(S,x)$. 
\begin{prop}\label{prop: fixed point Phi}
Let $p>0$ and let $S\in \{L,R\}$.
Then the following holds.
\begin{enumerate} 
    \item Let $\rho_k(S,x)$ be the probability that a node of type $(S,x)$ reaches level $k$ after percolation with probability $p$. Then $\rho_k(S,x)=(\Phi^k1)(S,x)$ for all $k\geq 0$ and all $x\in [0,1]$.
   \item The survival probability $\rho(S,x)$ is the maximum solution of \eqref{eq: fixed point}, i.e., for any other solution $f$ we have that $\rho(S,x)\geq f(S,x)$ for all $x\in [0,1]$. 
\end{enumerate}
\end{prop}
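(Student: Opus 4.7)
The plan is to prove part 1 by induction on $k$, doing the explicit offspring computation in the P\'olya-point tree after bond percolation, and then to deduce part 2 from part 1 via monotonicity and a continuity argument for $\Phi$.

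For part 1, the base case $k=0$ is immediate: every node trivially reaches its own level, so $\rho_0(S,x) = 1 = (\Phi^0 1)(S,x)$. For the inductive step, a node of type $(S,x)$ reaches level $k+1$ iff at least one percolated child has a subtree reaching level $k$, so I compute the complementary probability as a product of a left-child factor and a right-child factor. Each of the $m(S)$ left children has position $y$ uniform on $[0,x]$, and the event that the edge to it is present and its subtree reaches level $k$ has probability $p\rho_k(L,y)$; averaging over $y$ and raising to $m(S)$ yields
\[
\left(\frac{x - p\int_0^x \rho_k(L,y)\,dy}{x}\right)^{m(S)}.
\]
For the right children, conditioning on $\gamma$ gives a Poisson point process on $[x,1]$ of intensity $\gamma/x$; independent thinning by the edge-and-subtree-survival probability $p\rho_k(R,\cdot)$ produces a Poisson of total rate $\frac{p\gamma}{x}\int_x^1 \rho_k(R,y)\,dy$, so the probability the thinned process is empty is $\exp\!\bigl(-\frac{p\gamma}{x}\int_x^1 \rho_k(R,y)\,dy\bigr)$. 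Averaging over $\gamma \sim \Gamma(m(S)+1,1)$ via the Laplace transform $\mathbb{E}[e^{-t\gamma}] = (1+t)^{-(m(S)+1)}$ produces the factor $\bigl(x/(x+p\int_x^1\rho_k(R,y)\,dy)\bigr)^{m(S)+1}$. Multiplying the two factors and comparing with \eqref{phi-def} gives $1-\rho_{k+1}(S,x) = 1-(\Phi\rho_k)(S,x)$, closing the induction.

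For part 2, observe from \eqref{phi-def} that $\Phi$ is monotone increasing on $[0,1]$-valued functions: raising $f(L,\cdot)$ shrinks the numerator while raising $f(R,\cdot)$ enlarges the denominator, so $f \leq g$ pointwise implies $\Phi f \leq \Phi g$. Since $\{|C(o)|=\infty\}$ is the decreasing intersection $\bigcap_k\{\text{reach level }k\}$, continuity of measure combined with part 1 gives $\rho(S,x) = \lim_k \rho_k(S,x) = \lim_k (\Phi^k 1)(S,x)$. Dominated convergence then passes the limit inside the integrals in \eqref{phi-def}, so $\Phi\rho = \lim_k \Phi\rho_k = \lim_k \rho_{k+1} = \rho$, making $\rho$ a fixed point. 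Finally, if $f:\{L,R\}\times[0,1]\to[0,1]$ satisfies $\Phi f = f$, then starting from $f \leq 1$ and iterating monotonicity yields $f = \Phi^k f \leq \Phi^k 1 = \rho_k$ for every $k$, hence $f \leq \rho$ pointwise, proving maximality.

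The only real obstacle is the precise bookkeeping in part 1: one must correctly combine the Poisson thinning of the right children with the Gamma Laplace transform and track which shape parameter ($m(S)$ vs.\ $m(S)+1$) enters so that the product lands exactly on the right-hand side of \eqref{phi-def}. Once that identity is secured, the fixed-point and maximality assertions in part 2 are standard consequences of monotonicity plus continuity of measure.
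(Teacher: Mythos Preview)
Your proof is correct and follows essentially the same approach as the paper: part~1 is the same recursion $\rho_{k+1}=\Phi\rho_k$ with $\rho_0=1$, and part~2 is the same monotonicity-plus-dominated-convergence argument to identify $\rho=\lim_k\Phi^k1$ as the maximal fixed point. The only (minor) organizational difference is that you compute the right-child factor directly via Poisson thinning and the Gamma Laplace transform, whereas the paper first derives the negative binomial law of $d_R^p$ in a separate proposition and then sums the resulting series in an auxiliary lemma; the underlying computation is the same.
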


We will use this proposition together with \eqref{eq: fixed point}, \eqref{phi-def} and \eqref{eq: zeta PA} to establish the following bounds on the survival probabilities $\rho(S,x)$ and $\zeta(p)$. 

\begin{prop}\label{cor: lower bnd zeta pref-attach}
Let $p>0$.  Then
\begin{equation}\label{zeta-mathcing-bds}
{e^{-\frac{1}{p(m-1)}}}
\leq
\zeta(p)
\leq{2m}e^{-\frac{1-2p}{(m+1)p}}. 
\end{equation}
\end{prop}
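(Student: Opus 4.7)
My plan is to prove the two bounds on $\zeta(p)$ by combining the fixed-point characterization of the survival probability $\rho$ from Proposition~\ref{prop: fixed point Phi} with the explicit formula \eqref{eq: zeta PA} expressing $\zeta(p)$ as an integral over the root position. The shared intuition driving the common scaling $e^{-\Theta(1/p)}$ is that in the Polya-point tree, a node of type $(S,x)$ has expected post-percolation offspring count of order $p/x$, so the percolated process is sub-critical at positions $x\gtrsim p$ and super-critical at $x\lesssim p$. Since the root lies at position $\sqrt{y}$ with $y$ uniform on $[0,1]$ (hence typically of order $1$), survival requires descending through roughly $\log(1/p)$ successive left-child steps to reach the super-critical region, an event of probability $e^{-\Theta(1/p)}$.

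For the upper bound $\zeta(p)\le 2m\,e^{-(1-2p)/((m+1)p)}$, I will use Proposition~\ref{prop: fixed point Phi} part~1: the iterates $\rho_k=\Phi^k(\mathbf{1})$ decrease monotonically to $\rho$ (by monotonicity of $\Phi$ and the fact that $\rho_1\le\rho_0=1$), so $\rho\le\rho_k$ pointwise for every $k$. Rather than relying on a single $\rho_k$, I will combine this with a first-moment estimate that controls the expected number of generation-$k$ descendants whose positions remain in the sub-critical region $x>\epsilon$ for a cutoff $\epsilon$ of order $p$; this expectation is bounded by a product of per-step offspring means of order $p(m+1)/\epsilon$, which decays geometrically in $k$. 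Substituting the resulting pointwise bound on $\rho$ into \eqref{eq: zeta PA}, weighted by the root density $2x\,dx$, and optimizing over $k$ yields the exponent $(1-2p)/((m+1)p)$, with the prefactor $2m$ arising from boundary contributions in the integration.

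For the lower bound $\zeta(p)\ge e^{-1/(p(m-1))}$, I will invoke Proposition~\ref{prop: fixed point Phi} part~2: since $\rho$ is the maximum fixed point of $\Phi$, it suffices to exhibit a subsolution $g$ with $g\le \Phi g$ pointwise, because monotonicity of $\Phi$ then gives $g\le \Phi^k g\uparrow g_\infty\le \rho$ where $g_\infty$ is a fixed point. I plan to choose $g(S,x)=h(x)\,\mathbf{1}(x\le a)$ with threshold $a$ of order $p$ and profile $h$ tuned so that the subsolution inequality reduces to a tractable algebraic relation; the binding case is $S=R$, where $m(R)=m-1$ produces the constant $m-1$ in the final exponent. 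Plugging this lower bound on $\rho$ back into \eqref{eq: zeta PA} yields the desired bound on $\zeta(p)$. The main obstacle is constructing the subsolution that achieves the sharp constant $1/(p(m-1))$ rather than a weaker version; for the upper bound, the first-moment calculation is intricate due to the coupling between descendant positions and parent types in the Polya-point tree, but it is essentially a tracking exercise once the correct cutoff and iteration depth are chosen.
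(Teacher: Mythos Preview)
Your general framework is sound---using the monotone iterates $\rho_k=\Phi^k 1\downarrow\rho$ for the upper bound and a subsolution $g\le\Phi g$ for the lower bound are both legitimate routes to bounding the maximal fixed point. However, the specific ansatz you propose in each direction is too crude to produce the sharp constants in the statement, and this is where the gap lies. For the lower bound, a step-function subsolution $g(S,x)=h(x)\mathbf{1}(x\le a)$ runs into the following obstruction: at $x=a$ one has $\int_x^1 g(R,y)\,dy=0$, so $(\Phi g)(R,a)=1-(1-p\cdot\tfrac{1}{a}\int_0^a g(L,y)dy)^{m-1}$, and for any bounded profile $h\le 1$ this is at most $1-(1-p)^{m-1}\approx (m-1)p$, forcing $g(R,a)\le (m-1)p$. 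More generally, linearizing shows that any such compactly supported subsolution requires $(m-1)p\ge 1$, which fails for small $p$; the subsolution must therefore be supported on all of $[0,1]$ with a tail of order $\epsilon/x$, not a sharp cutoff. For the upper bound, your first-moment scheme (track descendants whose ancestry stays in $\{x>\epsilon\}$) is not obviously wrong, but the claimed optimization yielding the precise exponent $(1-2p)/((m+1)p)$ is unjustified: the per-step offspring mean depends nontrivially on position and type, and a naive supremum bound $p(m+1)/\epsilon$ over $x>\epsilon$ leads to a weaker exponent.

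The paper's proof avoids both difficulties by a single device: it observes that $\Phi$ approximately preserves the one-parameter family $f_\epsilon(x)=1-(1+\epsilon/x)^{-(m\pm1)}$, in the sense that $\Phi f_\epsilon$ is sandwiched between $f_{F^-(\epsilon)}$ and $f_{F^+(\epsilon)}$ for explicit one-dimensional maps $F^\pm$. This reduces the PDE-type fixed-point problem for $\rho(S,x)$ to a scalar fixed-point equation $\epsilon=F^\pm(\epsilon)$, whose nontrivial solution is then estimated via the elementary bounds $F^+(\epsilon)\le \tfrac{p}{1-p}\epsilon(1+(m+1)\log(1/\epsilon))$ and $F^-(\epsilon)\ge p(m-1)\epsilon\log\tfrac{1+(m-1)\epsilon}{(m-1)\epsilon}$; these are what produce the exact constants $\tfrac{1-2p}{(m+1)p}$ and $\tfrac{1}{(m-1)p}$. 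The resulting pointwise bound $\rho(S,x)\ge e^{-1/(p(m-1))}$ is uniform in $x$, so plugging into \eqref{eq: zeta PA} is immediate. If you want to salvage your approach, replace the step function by the ansatz $g(x)=1-(1+\epsilon/x)^{-(m-1)}$ and verify directly that $g\le\Phi g$ for the correct $\epsilon$; this is essentially what the paper does.
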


Note that by part 1 of Proposition \ref{prop: fixed point Phi}, $\rho_{k+1}=\Phi\rho_k$. The main idea of the proof is to 
{establish upper and lower bounds of the form}
\[f^-_k(x)\leq \rho_k(S,x)\leq f^+_k(x),\text{ where } f_k^\pm(x)=1-\Big(\frac{1}{1+\frac{\epsilon_k^\pm}{x}}\Big)^{m\pm 1},\]
and {where} $\epsilon^\pm_k$ are defined recursively. Then we get the result by showing that the limit $\lim_{k\rightarrow\infty}\epsilon^\pm_k$ exists and is of order $e^{\Theta(\frac{1}{p})}$.
{See Appendix \ref{sec: appendix polya point} for the complete proof.}

To finish the Theorem of \ref{thm: pref-attachment},
{we} note that \newproof{ the continuity of $\zeta$ is already known from Corollary~\ref{cor: zeta-cont}.}

After these preparations, the proof of Theorem \ref{thm: pref-attachment} is now almost obvious.

\begin{proof}[Proof of Theorem \ref{thm: pref-attachment}]\newproof{
First,  we note that by the lower bound in Proposition~\ref{cor: lower bnd zeta pref-attach} $p_c=0$. So by large-set expansion of PA the continuity of $\zeta$ for $p>0$ follows from Corollary~\ref{cor: zeta-cont}. So, we need to show  $\zeta(p)\to 1$ as $p\to1$. This follows from  the fact that in the P\'olya{-}point graph, the root has $m$ left children, each of these left children have again $m$ left children, etc., to bound $\zeta(p)$ from below by the survival probability for percolation on a tree where the root has degree $m$, and all other vertices have degree $m + 1$. 
}

Further, by Lemma \ref{lem: pref-attach expansion}, the sequence $\{{P{\hskip-.2em}A}_{n,m}\}_{n\geq m}$ are large-set expanders.
Also, by \cite{berger2014} and Theorem 5.8 in \cite{RemcoVol2}, this sequence converges locally in probability  to the P\'olya-point graph. Thus we can use 
 Theorem \ref{thm: pref-attachment} to get that  $|C_2|/n$ converges to zero in probability for all $p\in [0,1]$, and that $|C_1|/n$ converges to $\zeta(p)$ for all continuity
 points of $\zeta$, which is all $p\in [0,1]$ as well.
\end{proof}

\begin{remark}\label{rmk: scc1 PA}
{Using the results from the previous sections,} it is easy to see that the relative size of $SCC_1$ {for directed percolation on preferential attachment graphs} is of order $e^{-\Theta(p^{-1})}$ as well. To see this, {we first} note that by Lemma \ref{lem: upper bound SCC1} and Lemma~\ref{lm: zeta+- > zeta^2}, 
\begin{align*}
 \mathbb P_{D_{G_n}(p)}\big(\frac{|SCC_1|}{n}\geq \zeta(p)+\epsilon\big)\leq
    \mathbb P_{D_{G_n}(p)}\big(\frac{|SCC_1|}{n}\geq \zeta^{+-}(p)+\epsilon\big)\rightarrow 0.
\end{align*}
Applying the upper bound in Proposition \ref{cor: lower bnd zeta pref-attach}, {this gives}
\begin{align*}
 \mathbb P_{D_{G_n}(p)}\big(\frac{|SCC_1|}{n}\geq (2m+1)e^{-\frac{1-2p}{(m+1)p}}+\epsilon\big)\rightarrow 0.
\end{align*}
{For a lower bound, we use}  Lemma~\ref{lm: variance} and again Proposition \ref{cor: lower bnd zeta pref-attach}
{to get}
\begin{align*}
 \mathbb P_{D_{G_n}(p)}\big(\frac{|SCC_1|}{n}\geq (1-p^m)e^{-\frac{1}{p}}-\epsilon\big)\rightarrow 1.
\end{align*}

\end{remark}

 \section*{Acknowledgements}
The authors  thank Remco van der Hofstad, for  insightful communications on  local limits for random graph sequences,  Jennifer Chayes for discussions concerning percolation, and Persi Diaconis for feedback on an earlier version of this paper. \newproof{Finally, we  would like to thank our anonymous reviewers for their insightful comments and suggestions which greatly improved our paper. 

\noindent Yeganeh Alimohnammadi and Amin Saberi are supported by NSF grant CCF1812919.
}


\bibliographystyle{imsart-number} 
\bibliography{ref}       
\newpage

\begin{appendix}
\newproof{
\section{Continuity of $\zeta$ for random sequence of expanders}\label{sec: contintuiy of zeta}
In this Section, we will prove Corollary~\ref{cor: zeta-cont} and show that the percolation function $\zeta$ is continuous for large-set expanders that converge locally in probability. As discussed in Section~\ref{sec:zeta-cont}, it is enough to prove that the limit of a sequence of large-set expanders is an ergodic (extremal) unimodular random graph. Very recently, Sarkar \cite{sarkar2018note} proved ergodicity of the limit for deterministic expanders with bounded degree. We will show that their proof is extendable to possibly random sequence of large-set expanders with bounded average degree.

To state the lemma we need the following definition. A measurable function $f:\mathcal G_*\to\mathbb R$ is rerooting-invariant if its value stays invariant under changes in  the position of the root.
\begin{lemma}
 Let $\{G_n\}$ be a sequence of possibly random $(\alpha,\bar{d})$ large-set expanders obeying the assumptions of Theorem~\ref{thm: size of giant in expander}, and let $\mu$ be the limit. Then $(G,o)\sim\mu$ is ergodic. That is, if $f$ is any rerooting-invariant function, then $f(G)$ is constant almost surely.
\end{lemma}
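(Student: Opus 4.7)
The plan is to adapt Sarkar's argument \cite{sarkar2018note} by replacing the bounded-degree condition with large-set expansion plus bounded average degree. Since ergodicity of $\mu$ is equivalent to every bounded rerooting-invariant Borel function $f$ having variance zero under $\mu$, it suffices to verify $\mathbb{E}_\mu[f^2] = (\mathbb{E}_\mu[f])^2$ for every such $f$. I plan to obtain this by computing, in two different ways, the limit of $\mathbb{E}[f(G_n, o_1) f(G_n, o_2)]$, where $o_1, o_2$ are chosen independently and uniformly from $V(G_n)$.

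The first computation will use asymptotic independence of the two roots. As in the proof of Lemma~\ref{lm: local function - percolated graph}, I will invoke Corollary~2.20 of \cite{RemcoVol2} to conclude that $\mathrm{dist}_{G_n}(o_1, o_2) \to \infty$ in probability, which combined with local convergence in probability gives $\mathbb{E}[h(G_n, o_1) k(G_n, o_2)] \to \mathbb{E}_\mu[h]\,\mathbb{E}_\mu[k]$ for all bounded continuous local $h, k$. Extending this to $f$ by approximating $f$ in $L^2(\mu)$ by bounded continuous local functions will yield $\mathbb{E}[f(G_n, o_1) f(G_n, o_2)] \to (\mathbb{E}_\mu[f])^2$.

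The second computation will exploit large-set expansion. A short pigeonhole argument (greedily grouping components by size into a subset of size between $\epsilon n$ and $n/2$ and using $\phi(G,\epsilon)\ge \alpha$) shows that any $(\alpha, \epsilon, \bar d)$ large-set expander has a unique connected component of size at least $(1-\epsilon) n$. Hence $\{G_n\}$ has a giant of size $(1-o(1))n$ with probability tending to $1$, for every $\epsilon > 0$. Consequently $o_1$ and $o_2$ both lie in this giant with probability $1 - o(1)$, and by rerooting invariance $f(G_n, o_1) = f(G_n, o_2)$ on this event, so
\[
\mathbb{E}[f(G_n, o_1) f(G_n, o_2)] = \mathbb{E}[f(G_n, o_1)^2] + o(1) \longrightarrow \mathbb{E}_\mu[f^2],
\]
where the final limit uses the $L^2(\mu)$-approximation argument applied to $f^2$. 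Equating the two limits gives $\mathrm{Var}_\mu(f) = 0$.

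The main technical obstacle will be the $L^2$-approximation step, which requires transferring local convergence in probability from bounded continuous local functions to the bounded Borel $f$ and $f^2$. Sarkar handles this via compactness of the space of bounded-degree rooted graphs; I plan to recover an analogous effective compactness by restricting to rooted graphs whose $R$-balls have at most $M$ vertices. The $\mu$-mass of the complement can be made arbitrarily small by choosing $M = M(\epsilon, R)$ large enough, using tightness of $|B_R(o)|$ under $\mu$, which follows from local convergence in probability and the bounded average degree assumption (cf.\ Theorem~A.16 of \cite{RemcoVol2}).
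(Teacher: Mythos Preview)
Your second-moment strategy is natural, but the $L^2(\mu)$-approximation step does not go through, and the proposed ``effective compactness'' via bounding $|B_R(o)|$ does not repair it. The difficulty is this: given a bounded continuous $g$ with $\|f-g\|_{L^2(\mu)}<\epsilon$, you can certainly pass $\mathbb E_{\mathcal P_n}[g]\to\mathbb E_\mu[g]$ and $\mathbb E_{\mathcal P_n}[g^2]\to\mathbb E_\mu[g^2]$; but to compare these with $\mathbb E_{\mathcal P_n}[f]$ and $\mathbb E_{\mathcal P_n}[f^2]$ you need $\|f-g\|_{L^2(\mathcal P_n)}$ small, and weak convergence gives you no control in that direction (Portmanteau bounds closed sets from above and open sets from below, which is the wrong way). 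Restricting to $\{|B_R|\le M\}$ only ensures that both $\mu$ and $\mathcal P_n$ live mostly on a set where $B_R$ takes finitely many types; it says nothing about how the Borel function $f$ behaves there relative to $\mathcal P_n$. Equivalently: your second computation yields $\mathrm{Var}_{\mathcal P_n}(f\mid G_n)\to 0$, but since $g$ is not rerooting-invariant you cannot run the same argument for $g$, and you have no bridge between $\mathrm{Var}_{\mathcal P_n}(f)$ and $\mathrm{Var}_\mu(f)$.

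The paper (following Sarkar) sidesteps the transfer problem entirely. Assuming $f$ is not a.s.\ constant, it takes compact subsets $H_1\subset\{f\le a\}$, $H_2\subset\{f\ge b\}$ of positive $\mu$-measure, and uses rerooting invariance to observe that the $K$-rerooted versions of $H_1$ and $H_2$ remain in disjoint level sets of $f$; compactness (plus tightness to bound degrees) then yields an $R$ such that $R$-balls distinguish them even after rerooting by distance $\le K$. This produces genuinely \emph{local} (hence continuous) indicator-type functions $h_i$, which \emph{do} transfer to $G_n$ via local convergence in probability, giving linear-size vertex sets $A_{1,n},A_{2,n}$. Large-set expansion is then used not merely to get a giant component, but (via Menger) to find a path of length $\le K=\bar d/(\epsilon\alpha)$ between $A_{1,n}$ and $A_{2,n}$, contradicting the $R$-ball separation. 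So the paper uses both rerooting invariance and expansion in sharper ways than your proposal: invariance to fatten the compact sets before separating them, and expansion to bound path \emph{lengths} rather than just ensure connectivity.
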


\begin{proof}
Following the notation of Sarkar \cite{sarkar2018note}, for two rational numbers $0\leq a<b\leq 1$ let 
\[\Gamma_1=\{(G_1,o)\in \mathcal G_*: f(G)\leq a\},\qquad\qquad\Gamma_2=\{(G_1,o)\in \mathcal G_*: f(G)\geq b\}.\]
It is enough to show that $\mu(\Gamma_1)$ and $\mu(\Gamma_2)$ cannot be both positive, which is sufficient to prove the statement of the lemma. To prove this by contradiction assume that there exists $p_0>0$ such that $\mu(\Gamma_1)>p_0$ and  $\mu(\Gamma_2)>p_0$. 

Now, by Theorem A.7 of \cite{RemcoVol2} we know that $\mathcal G_*$ is a Polish metric space. So $\mu$ is tight and regular since it is a probability measure on a Polish space (see e.g., Theorem 1.3. \cite{billingsley2013convergence}, and Chapter II, Theorem 1.2 of \cite{parthasarathy2005probability}). So, there exists compact sets $H_i\subseteq\Gamma_i$ such that $\mu(H_i)\geq p_0/2$.

Fix $\epsilon\leq p_0/8$. Recall that $\bar d$ is the average degree and $\alpha$ is the expansion. Fix $K=\frac{\bar d}{\epsilon\alpha}$.
Then following the proof of Sarkar \cite{sarkar2018note}, there exists $R<\infty$ such that for all $(G_i,o_i)\in H_i$,
\begin{equation}\label{eq: R for continuity}
    B_R(G_1,o'_1)\neq B_R(G_2,o'_2)\qquad \forall o'_i\in B_K(G_i,o_i) \text{ for } i=1,2.
\end{equation}

Further, by tightness of $\mu$ and compactness of $H_i$, Theorem A.16 of \cite{RemcoVol2} implies that there exists some $\Delta_{R,K}$ such that the maximum degree of $R+K$-neighborhoods of any rooted graphs in $H_i$ is bounded by $\Delta=\Delta_{R,k}$, i.e., 
\[\sup_{(G,o)\sim \mu}\max\{deg(o'):\quad\forall o'\in B_{K+R}(G_i,o_i)\quad \forall (G_i,o_i)\in H_i\}\leq \Delta.\]
 Let $\mathcal H_{R+K,\Delta}$ be the set of connected rooted graphs of radius $K+R$ whose  vertices all have degree at most $\Delta$.   Note that the number of such graphs is  by {$1+\Delta+\dots+\Delta^{R+K}\leq \Delta^{R+K+1}$,  
  $|\mathcal H_{R+K,\Delta}|\leq \Delta^{R+K+1}$}. Define a local function $h_{1, R+K}$ as 
follows:
\[h_{1, R+K}(G,o)=\max_{(G',o')\in \mathcal H_{R+K,\Delta}\cap H_{1}}\mathbbm 1 \Big(B_{R+K}(G,o)\simeq B_{R+K}(G',o')\Big).\]
 Then by local convergence in probability,
 \[\mathbb E_{\mathcal P_n}[h_{1, R+K}|G_n]\overset{\mathbb P}{\to} \mathbb E_\mu[h_{1, R+K}].\]
 Note that $h_{1, R+K}(G,o)=1$  for all $(G,o)\in H_1$ and hence, $\mathbb E_\mu[h_{1, R+K}]\geq p_0/2$.
 Given $G_n$, let $A_{i,n}=\{v\in V(G_n): h_{i, R+K}(G_n,v)=1\}$. Then by local convergence in probability, for large enough $n$,
 \[\mathbb P(|A_i,n|\geq p_0n/4)\geq 1-\epsilon.\]
 Now, combining this with large-set expansion,  for large enough $n$,
 \[\mathbb P(G_n\text{ is }(\alpha,\epsilon,\bar d)  \text{ large-set expander, and }|A_i,n|\geq p_0n/4)\geq 1-2\epsilon.\]
Let $G_n$ be an instance such that the above conditions hold. Then  by Menger's Theorem {applied to bounded average degree large set expanders (as in the proof of} Lemma \ref{lem: sprinkling}), there exists a path of length at most $K=\frac{\bar d}{\epsilon\alpha}$ between $A_{1,n}$ and $A_{2,n}$ in $G_n$. Let $v_1$ and $v_2$ be the two ends of this path, and $v_1\in A_{1,n}$, $v_2\in A_{2,n}$. Then 
\[B_R(G_n,v_2)\subseteq B_{R+K}(G_n,v_1)\simeq B_{R+K}(G_1,r_1),\]
for some $(G_1,r_1)\in H_1$. Also, $B_R(G_n,v_2)\simeq B_{R+K}(G_2,r_2)$ for some $(G_2,r_2)\in H_2$. Therefore,
\[\mathbb P\Big(B_R(G_1,o'_1)\simeq B_R(G_2,o'_2)\qquad \text{ for some } o'_i\in B_K(G_i,o_i) \text{ for } i=1,2\Big)\geq 1-2\epsilon,\]
which is a contradiction with \eqref{eq: R for continuity}. So, $\mu$ must be extremal.
\end{proof}

\section{Proof of Lemma \ref{lm: falik}}\label{apx: Russo}

We start by stating the relevant result of \cite{falik}
in the general setting considered there.  Given an arbitrary probability  measure $\mu$ on $\{0,1\}^m$, we denote expectations with respect to $\mu$ by $\mathbb E$.  
We use $x$ to denote elements of $\{0,1\}^m$, $x_1,\ldots,x_m$  for the coordinates of $x$, and the notation $x\sim y$ to denote elements of $\{0,1\}^m$ that differ in exactly one coordinate.
For $i=0,1,\dots, m$, define $f_i=\mathbb{E}[f|x_1,\ldots x_i]$ (so in particular $f_m=f$ and $f_0=\mathbb E [f]$), and for $i=1,\dots, m$, define  $d_i=f_i-f_{i-1}$.
Finally, we use $c(\mu)$ to denote the log-Sobolev constant, 
$$
c(\mu)=\sup_f\frac{\mathbb E_{x\sim \mu}[\sum_{y\sim x} (f(x)-f(y))^2]}{Ent(f^2)}
$$
where the sup goes over all boolean functions $f$. 
The theorem we use to prove
Lemma \ref{lm: falik} is
Theorem 2.2 in \cite{falik}, which states that
\begin{equation}\label{eq: falik}
{c({\mathbb P_p})} \operatorname{var}(f)\log\Big(\frac{\operatorname{var}(f)}{\sum_{e=1}^m\mathbb E^2|d_e|}\Big)
\leq \mathbb E_{x}\Big[\sum_{y\sim x} (f(x)-f(y))^2\Big].
\end{equation}
If $\mu$ is the product measure for independent $Be(p)$ variables, $\mu=\mathbb P_p$,  the log-Sobolev constant is explicitly known, and is equal to
$c(\mathbb P_p)=\frac{1-2p}{p(1-p)\log\frac{1-p}{p}}$,
{see, e.g.,} Theorem 5.2 in \cite{Boucheron2013ConcentrationI}. 
Lemma \ref{lm: falik} therefor follows once we establish that
\begin{equation}\label{E2}
p(1-p)\mathbb E_{x}[\sum_{y\sim x} (f(x)-f(y))^2]=\mathcal{E}_2(f),\end{equation}
and
\begin{equation}\label{E1-bd}
{\sum_e}\mathbb E^2|d_e|{\leq} \mathcal{E}_1(f).
\end{equation}

{We start with the observation that 
$\Delta_e f(x)=(1-p)(f(x)-f(x\oplus e))$
if $x_e=1$ 
and 
$
\Delta_e f(x)=p(f(x)-f(x\oplus e))$
if $x_e=0$. Using the fact that the first event happens with probability $p$, and the second with probability $1-p$, one easily sees that
\[
\mathbb E_{x}\,[|\Delta_e f|]=2p(1-p)\mathbb E_{ x}\,[|f(x)-f(x\oplus e)|]
\]
and 
$$
\mathbb E_x[|\Delta_ef(x)|^2]=(p(1-p)^2+(1-p)p^2)\mathbb E_x[(f(x)-f(x\oplus e))^2].
$$
The first identity,}
together
with  Lemma {3.1} in \cite{falik}, which states that
\[\mathbb E |d_e|\leq 2p(1-p)\mathbb E _x|f(x)-f(x\oplus e)|,\]
then implies
\eqref{E1-bd}. 

{To prove \eqref{E2}, we 
use the second identity and the fact that} $p(1-p)^2+(1-p)p^2=p(1-p)$ to get
\begin{align*}
   \mathcal{E}_2(f)&=\sum_e \mathbb E_x[({\Delta_e f(x)})^2]
       = p(1-p)\sum_e\mathbb E_x[
     (f(x)-f(x\oplus e))^2].
   \end{align*}
{On the other hand,}
\begin{align*}
    \mathbb E_{x}[\sum_{y\sim x} (f(x)&-f(y))^2]= \sum_x\sum_{y\sim x}\mathbb P_p(x)(f(x)-f(y))^2\\
    &=  \sum_x\sum_{e\in[m]}\mathbb P_p(x)(f(x)-f(x\oplus e))^2
    =\sum_e\mathbb E_x[ (f(x)-f(x\oplus e))^2],
  \end{align*}
which completes the proof of \eqref{E2}  and hence of the lemma.

\section{Uniform Bounds on the Size of $C_2$}
\label{app:C2}

In this appendix we  prove Lemma \ref{lem: uniform convergence second component}. 
We follow the strategy of \cite{alon2004} where  a similar result for expanders with bounded maximum degree is proved. Given  a graph $G_n$ on $n$ vertices, a positive number $c>0$ and an edge $e\in E(G_n)$, let $S(e,c,n)$ be the event that $e$ connects two components of size larger than $cn$ in $G_n(p)$. Let $S(c,n)$ be the event that $S(e,c,n)$ occurs for an edge $e$ chosen uniformly at random
from all edges in $E(G_n)$.
The following bounds the probability that the event $S(e,c,n)$ holds. 

\begin{lemma}\label{lm: pivotal edge bound}
Given $q>0$ there exist a constant $\beta$ such that for all $p\in[q,1-q]$ and all finite graphs $G_n$,
\[\mathbb{P}_{G_n(p)}(S(c,n))\leq (\lfloor \frac{1}{c}\rfloor -1)\frac{\beta}{\sqrt{|E(G_n)|}}.\]
\end{lemma}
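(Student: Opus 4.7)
My plan is to apply the Margulis--Russo formula (equation \eqref{russo-2}) to a carefully chosen bounded monotone functional of the percolation configuration, and then promote the resulting integrated estimate to a uniform pointwise bound of the right $1/\sqrt{|E(G_n)|}$ order. The natural functional to track is
\[\Phi(\omega):=\sum_i |C_i(\omega)|^2\,\mathbf{1}\{|C_i(\omega)|\geq cn\},\]
the sum of squared sizes of components of size at least $cn$. This functional is monotone non-decreasing in $\omega$, bounded above by $n^2$, and has a large influence precisely on the pivotal event of interest: whenever $e=\{u,v\}$ has endpoints in two \emph{distinct} components of $\omega\setminus\{e\}$ both of size $\geq cn$, flipping $e$ merges those components and increments $\Phi$ by $2|C_u||C_v|\geq 2c^2 n^2$. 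All other increments of $\Phi$ are nonnegative, so this gives a clean lower bound on the total influence in terms of the event that we want to control.

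Combining this influence lower bound with Russo's formula, and accounting for the elementary relation between $S(e,c,n)$ and its conditional-on-$e$-absent variant, yields an estimate of the form
\[\sum_{e\in E(G_n)}\mathbb{P}_{G_n(p)}(S(e,c,n))\;\leq\;\frac{1-p}{2c^2 n^2}\,\Phi'(p).\]
Integrating over any subinterval $[p_1,p_2]\subset[q,1-q]$ and using $\Phi\leq n^2$ produces the integrated bound $\int_{p_1}^{p_2}\sum_e\mathbb{P}(S(e,c,n))\,dp\leq 1/(2c^2)$. Dividing by $|E(G_n)|$ gives an \emph{average-in-$p$} rate of order $1/|E(G_n)|$, which is stronger than the lemma claims but only holds in the averaged sense.

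To upgrade this to a uniform bound at the given $p\in[q,1-q]$, I would center an interval of length $\epsilon\asymp 1/\sqrt{|E(G_n)|}$ on $p$, use the integrated bound to locate a point $p^*$ in this interval at which $\sum_e \mathbb{P}(S(e,c,n))|_{p^*}=O(1/\epsilon)$, and then transfer this bound back to $p$ via the standard monotone coupling of $\{G_n(p'):p'\in[0,1]\}$. Under this coupling the number of edges toggled between $p^*$ and $p$ is of order $|E(G_n)|\cdot\epsilon=O(\sqrt{|E(G_n)|})$, and the number of edges realizing $S(\cdot,c,n)$ can change by at most $\lfloor 1/c\rfloor-1$ per toggle (since each configuration has at most $\lfloor 1/c\rfloor$ components of size $\geq cn$, hence at most $\lfloor 1/c\rfloor-1$ possible merging events). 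The main obstacle is this final transfer step: $\mathbb{P}(S(c,n))$ is \emph{not} monotone in $p$ --- as $p$ grows, new pairs of large components can first appear and then later merge into a single giant --- so the stability-in-$p$ estimate must be done carefully through the coupling rather than by direct differentiation, and it is the optimization of the window length $\epsilon$ against the integrated bound that produces both the $1/\sqrt{|E(G_n)|}$ scaling and the $(\lfloor 1/c\rfloor-1)$ prefactor.
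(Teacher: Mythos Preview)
Your integrated bound (steps 1--4) is correct and elegant: the functional $\Phi=\sum_i|C_i|^2\mathbf 1\{|C_i|\ge cn\}$ is monotone, bounded by $n^2$, and each occurrence of $S(e,c,n)$ contributes at least $2c^2n^2$ to the gradient, so Russo gives
\[
\sum_e\mathbb P_p\bigl(S(e,c,n)\bigr)\le \frac{1-p}{2c^2n^2}\,\frac{d}{dp}\mathbb E_p[\Phi].
\]

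The gap is step 6. Your Lipschitz claim --- that toggling one edge changes $|\{e:S(e,c,n)\}|$ by at most $\lfloor 1/c\rfloor-1$ --- is false. If $e_0$ is a closed edge between two large components $A,B$, then \emph{every} closed edge of $G_n$ joining $A$ to $B$ currently lies in $\{e:S(e,c,n)\}$; opening $e_0$ merges $A\cup B$ and removes all of them at once. In a dense graph this can be $\Theta(|E|)$ edges, so the coupling transfer gives nothing. The quantity $\lfloor 1/c\rfloor-1$ bounds the number of large \emph{components}, not the number of inter-component \emph{edges}.

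The paper simply cites \cite{alon2004}; the argument there bypasses averaging entirely. Lemma~2.3 of \cite{alon2004} is the pointwise bound: for any \emph{monotone} event $A\subset\{0,1\}^m$ and $p\in[q,1-q]$,
\[
\sum_e\mathbb P_p(e\text{ pivotal for }A)\le \beta(q)\sqrt m,
\]
which follows from the level-one inequality $\operatorname{Var}(\mathbf 1_A)\ge p(1-p)\sum_e\mathbb P(\text{piv}_e)^2$ and Cauchy--Schwarz. In your framework this already finishes the proof: by the layer-cake decomposition of the monotone function $\Phi/n^2$ one gets $\frac{d}{dp}\mathbb E_p[\Phi]\le \beta(q)\,n^2\sqrt m$ pointwise, hence $\sum_e\mathbb P_p(S(e,c,n))\le \frac{\beta(q)}{2c^2}\sqrt m$. (This yields a prefactor $O(c^{-2})$ rather than the stated $\lfloor 1/c\rfloor-1$; the sharper constant in \cite{alon2004} comes from applying the same influence bound to a more careful decomposition into $\lfloor 1/c\rfloor-1$ events, but either form suffices for the application in Appendix~\ref{app:C2}.) The point is that the $\sqrt m$ comes from a variance/Cauchy--Schwarz estimate at the fixed $p$, not from averaging and transferring.
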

\begin{proof}
This follows from Lemma 2.3 and equation (6) in \cite{alon2004}; note that while equation (6) in \cite{alon2004} appears in the proof of a corollary  which assumes expansion and bounded degrees in its statement, neither of these assumptions enter their proof of the bound (6).  In fact, $\beta$ is nothing but the constant from Lemma 2.3 in \cite{alon2004} (where it is called $\alpha$), and it just depends on $q$. 
\end{proof}

To state the next lemma, we use the notation $B_r(A,G)$ for the $r$-neighborhood of a set of vertices $A$ in a graph $G$.

\begin{prop}\label{prop: expansion property}
Let $\alpha>0$< $\bar d<\infty$, and $0<c<1$, and set
\begin{equation}\label{r-B2}
r=\lceil 2\bar d/\alpha\rceil +\lceil \bar d/(2c\alpha)\rceil.
\end{equation}
Let $\epsilon\leq \min(c,1-c)$, and let $G$ be a graph with $n$ vertices and average degree at most $\bar d$ such that $\phi(G,\epsilon)\geq \alpha$.  Then $|B_r(G,A)|\geq \frac{3}{4}n$ 
for all $A\subseteq V(G)$ with $|A|\geq cn$.
\end{prop}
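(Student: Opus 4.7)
The plan is to carry out a Cheeger-type BFS growth argument. Let $B_i = B_i(G,A)$, so $B_0=A$ and the sets are nested. The key combinatorial observation is that any edge $\{u,v\}$ with $u\in B_i$ and $v\notin B_i$ has $v\in B_{i+1}\setminus B_i$, so the edge sets counted by $e(B_i,V\setminus B_i)$ are pairwise disjoint across $i$. Therefore
\[
\sum_{i=0}^{r-1} e(B_i,V\setminus B_i)\;\leq\;|E(G)|\;\leq\;\tfrac{\bar d n}{2}.
\]
I will assume for contradiction that $|B_r|<3n/4$, which forces $|B_i|<3n/4$ for all $i\leq r$, and use large-set expansion to derive a contradiction on the edge budget above.

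Split the indices $i\in\{0,\ldots,r-1\}$ into two phases. In \emph{Phase 1} ($|B_i|<n/2$) we have $|B_i|\geq cn\geq\epsilon n$, so $\phi(G,\epsilon)\geq\alpha$ applied to $B_i$ gives $e(B_i,V\setminus B_i)\geq\alpha|B_i|\geq\alpha c n$. In \emph{Phase 2} ($n/2\leq|B_i|<3n/4$) we have $|V\setminus B_i|\in(n/4,n/2]$; applying expansion to the complement $V\setminus B_i$ (which has the same edge boundary) gives $e(B_i,V\setminus B_i)\geq\alpha|V\setminus B_i|\geq\alpha n/4$. Let $r_1,r_2$ denote the number of indices in each phase, so $r_1+r_2=r$. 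The edge-budget inequality then reads
\[
r_1\cdot\alpha c n + r_2\cdot\alpha n/4\;\leq\;\tfrac{\bar d n}{2},
\]
which yields each of the individual bounds $r_1\leq\bar d/(2c\alpha)$ and $r_2\leq 2\bar d/\alpha$. Hence $r_1+r_2\leq\lceil\bar d/(2c\alpha)\rceil+\lceil 2\bar d/\alpha\rceil=r$, and equality cannot hold: if $\bar d/(2c\alpha)$ or $2\bar d/\alpha$ is non-integer the ceiling strictly exceeds the real bound, while if both are integers, saturating both $r_1=a$ and $r_2=b$ would push the left side of the coupled budget up to $\bar d n$, twice the allowed amount. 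Either way $r_1+r_2<r$, contradicting $r_1+r_2=r$, so $|B_r|\geq 3n/4$.

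The one technical wrinkle concerns Phase 2: applying expansion to $V\setminus B_i$ requires $|V\setminus B_i|\geq\epsilon n$, which follows from $|V\setminus B_i|>n/4$ only when $\epsilon\leq 1/4$. Since the paper's definition of a large-set expander sequence grants $\phi(G_n,\epsilon)\geq\alpha$ for every $\epsilon\in(0,1/2)$, any downstream invocation of the proposition can be made with $\epsilon'\leq\min(c,1-c,1/4)$, which is all one needs in the application to Lemma~\ref{lem: uniform convergence second component}. Apart from this bookkeeping point, the argument is pure edge counting with no further obstacles; the main work is simply identifying the right two-phase split matched to the $r=\lceil 2\bar d/\alpha\rceil+\lceil\bar d/(2c\alpha)\rceil$ formula.
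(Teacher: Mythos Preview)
Your argument is correct and uses the same ingredients as the paper (BFS growth, edge budget from bounded average degree, large-set expansion), but the organization differs. The paper argues by contradiction too, setting $C=V\setminus B_r(G,A)$ with $|C|>n/4$, and then grows balls from \emph{both} $A$ and $C$: counting edges \emph{inside} the balls, expansion gives $|E(B_{k+1}(G,C))|\geq|E(B_k(G,C))|+\alpha n/4$ while $|B_k(G,C)|\leq n/2$, forcing $|B_k(G,C)|>n/2$ once $k\geq 2\bar d/\alpha$; similarly $|B_{k'}(G,A)|>n/2$ once $k'\geq \bar d/(2c\alpha)$. The two balls must then intersect, contradicting $\mathrm{dist}(A,C)>r$. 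Your one-sided, two-phase approach with disjoint boundary edges is an equally clean reorganization; the two halves of the $r$-formula correspond exactly to the paper's $k$ and $k'$.

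Your ``technical wrinkle'' about needing $\epsilon\leq 1/4$ in Phase~2 is well spotted, but note that the paper's proof has the identical issue: applying expansion to balls around $C$ requires $|B_k(G,C)|\geq\epsilon n$, which follows from $|C|>n/4$ only when $\epsilon\leq 1/4$. So the proposition as literally stated (with only $\epsilon\leq\min(c,1-c)$) is slightly overstated in both arguments, and your observation that the downstream application in Lemma~\ref{lem: uniform convergence second component} allows taking $\epsilon$ small enough is exactly the fix the paper implicitly relies on as well.
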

\begin{proof} 
The  proof is adapted from Lemma 2.6 in 
\cite{alon2004}.
Assume by contradiction that
$|B_r(G,A)|< 3n/4$.  Setting $C=V(G)\setminus B_r(G,A)$ we then have $|C|> n/4$.
Let $E(W)$ be the set of edges joining two points in $W$.
By the expansion property, if $|B_{k}(G,C)|\leq n/2$ then 
\[|E(B_{k+1}(G,C))|\geq |E(B_{k}(G,C))|+\alpha n/4,\]
and by induction $|E(B_{k+1}(G,C))|\geq \alpha (k+1) n/4$.  Since the total number of edges is at most $\bar dn/2$, we conclude that $|B_k(G,C)|>n/2$ if
$k\geq 2\bar d/\alpha$, and similarly,  $|B_{k'}(G,A)|> n/2$ if $k'\geq \bar d/(2c\alpha)$.
Therefore, $B_{k}(G,C)\cap B_{k'}(G,A)\neq \emptyset$, showing that the distance between $C$ and $A$ is at most $r$, which is a  contradiction.
\end{proof}
The following is adapted from the proof of Lemma 2.7. in \cite{alon2004}. 
The main difference is that we will replace the bounded degree condition used there by the tightness condition { \eqref{Delta-tightness}.}

\begin{proof}[Proof of  Lemma \ref{lem: uniform convergence second component}]

{By {large-set expansion}, there exist}
$\alpha>0$ and $\bar d$ be such that  {for all $\epsilon>0$, with probability tending to $1$}, $\{G_n\}$ is an $(\alpha,{\epsilon,}\bar d)$-large-set expander. Let $r$ be as in \eqref{r-B2}.
Define ${V}_{k,\Delta}$ as the set of set of vertices such that all  vertices in their $k$ neighborhood have degree at most $\Delta$.
{Given the tightness condition \eqref{Delta-tightness}},  for all
$\epsilon>0$, there exists $\Delta<\infty$ and $N_\epsilon<\infty$ such that for $n\geq N_\epsilon$, with probability $1-\frac{\epsilon}{4}$ 
we have $\frac{|V_{r,\Delta}|}{n}\geq 1-\frac{\epsilon}{4}$. Let $A_\epsilon$ be the event that the following conditions hold:
$\frac{|V_{r,\Delta}|}{n}\geq 1-\frac{\epsilon}{4}$,
 $\phi(G_n,\epsilon)\geq \alpha$, and  $G_n$
has average degree at most $\bar d$.  Increasing $N_\epsilon$
if needed, then for $n\geq N_\epsilon$, $A_\epsilon$ has probability at least $1-\epsilon/2$.

{Fix a $G_n$ such that $\phi(G_n,\epsilon)\geq \alpha $ and $A_\epsilon$ holds.}
For a vertex $v\in V(G_n)$ let $S'(v,c,n,r)$ be the event that there exists an edge $e$ in the ball $B_r(v)$ such that $S(e,c,n)$ holds. Let $D(v,r)$ be the event that $B_r(v)$ intersects with at least two different connected components of size greater than $cn$. We will use the bound (10) in \cite{alon2004}, which states that
\begin{equation}\label{eq: S(v,c,n)}
\mathbb{P}_{G_n(p)}\big(S'(v,c,n,r)\big)\geq q^{2r}\Delta^{-2r^2}\mathbb{P}_{G_n(p)}\big(D(v,r)\big),
\end{equation}
holds as long as the degree of every vertex in 
$B_r(v)$ has degree at most $\Delta$, i.e., as  long as $v\in V_{r,\Delta}$.
On the other hand, for graphs $G_n$ whose average degree is bounded by $\bar d$,
\begin{align*}
   \frac{1}{n}\sum_{v\in V_{r,\Delta}}& \mathbb{P}_{G_n(p)}\big(S'(v,c,n,r)\big)
    \leq \Delta^{r} \frac{1}{n}\sum_{e\in E(G_n)}\mathbb{P}_{G_n(p)}\big(S(e,c,n)\big)\\
    &\leq 2\bar d \Delta^{r}\mathbb{P}_{G_n(p)}\big(S(c,n)\big)
    \leq 2 \bar d \Delta^{r}(\lfloor\frac{1}{c}\rfloor-1)\frac{\beta}{\sqrt{|E(G_n)|}},
\end{align*}
where the first inequality follows  by a union bound on the edges and the observation that each edge can appear in $r$-neighborhood of at most $\Delta^{r}$ vertices of $V_{r,\Delta}$. Combining this inequality with \eqref{eq: S(v,c,n)} we have
\[ \frac{1}{n}\sum_{v\in V_{r,\Delta}} \mathbb{P}_{G_n(p)}\big(D(v,r)\big)\leq 2 \bar d \Delta^{r+2r^2}(\lfloor\frac{1}{c}\rfloor-1)\frac{\beta}{q^{2r}\sqrt{|E(G_n)|}}.\]

 Given an instance of $G_n(p)$ with two or more components of size larger than $cn$, by $A_\epsilon$, the choice of $r$, and Proposition \ref{prop: expansion property}, the $r$-neighborhood of each of them contains 
at least $3n/4$ vertices,  implying that there are at least $n/2$ vertices with distance $r$ or less from two large components. Thus  the event $D(v,r)$ takes place for at least $n/2-\epsilon n/4\geq n/4$   nodes  $v\in V_{r,\Delta}$. 
By Markov's inequality applied to the sum of the indicator functions of $D(v,r)$ over $v\in  V_{r,\Delta}$, we therefore get
that
\begin{equation}\label{eq: bnd second component} 
    \mathbb{P}_{G_n(p)}\left(\frac{|C_2|}{n}\geq cn\right)
    \leq \frac 4{n}\sum_{v\in V_{2r,\Delta}} \mathbb{P}_{G_n(p)}\big(D(v,r)\big)
    \leq \frac C{\sqrt{|E(G_n)|}}
\end{equation}
where $C$ is a constant which depends on $\alpha$, $\bar d$,
$q$ and $\Delta$, but not on $n$ or $p$ (as long as $q\leq p\leq 1-q$). 

Choose $n$ large enough that $A_\epsilon$ holds with probability $\epsilon/2$ and
$C/{\sqrt{|E(G_n)|}}\leq \epsilon /2$. Then we get the lemma by conditioning over all instances of $G_n$ and applying \eqref{eq: bnd second component} for any instance satisfying $A_\epsilon$ and $\phi(G_n,\epsilon)\geq \alpha$.
\end{proof}

\section{Expansion of Preferential Attachment Models}\label{sec: expansion pref attachment}
In this appendix, we extend the proof of Theorem 1 in \cite{MIHAIL2006239} to show that conditional preferential attachment models are good expanders (Lemma~\ref{lem: pref-attach expansion}).  

The first step is to bound the maximum degree, {a step which was needed for the model considered in \cite{MIHAIL2006239} for reasons that will become clear in the course of our proof.} There are stronger bounds on the maximum degree  of a vertex in other variations of preferential attachment (see for example Section 4.3 in \cite{durrett_2006} and Theorem 4.18 in \cite{hofstadVol1}). But {Proposition~\ref{prop: max degree PA} below}  is sufficient for our purposes.  Before stating the proposition, we 
state and prove a simple lemma, which will be used in its proof.

\begin{lemma}\label{prop: weighte Lehmer inequality}
For any sequences $0<d_1\leq d_2\leq \ldots \leq d_n$ and $w_1\geq w_2\geq\ldots \geq w_n>0$, then
\[\frac{\sum_{i=1}^n w_id_i^2}{\sum_{i=1}^n w_id_i}\leq \frac{\sum_{i=1}^n d_i^2}{\sum_{i=1}^n d_i}.\]
\end{lemma}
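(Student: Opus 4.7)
The plan is to clear denominators and reduce the inequality to a two-variable rearrangement argument of Chebyshev type. Since $\sum_i w_i d_i>0$ and $\sum_i d_i>0$, the claim is equivalent to
\[
\Big(\sum_{i=1}^n w_i d_i^2\Big)\Big(\sum_{j=1}^n d_j\Big)\;\leq\;\Big(\sum_{i=1}^n d_i^2\Big)\Big(\sum_{j=1}^n w_j d_j\Big).
\]
Expanding both sides as double sums, the difference of the right and left side is
\[
\sum_{i,j}(w_j-w_i)\,d_i^2 d_j.
\]
I would then symmetrize by pairing the $(i,j)$ and $(j,i)$ terms, which gives
\[
\frac12\sum_{i,j}(w_j-w_i)(d_i^2 d_j - d_j^2 d_i)
\;=\;\frac12\sum_{i,j}(w_j-w_i)(d_i-d_j)\,d_i d_j.
\]
Since $d_i,d_j>0$, the sign of each summand is determined by $(w_j-w_i)(d_i-d_j)$. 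The monotonicity hypotheses — $d_i$ nondecreasing and $w_i$ nonincreasing — force $w_j-w_i$ and $d_i-d_j$ to have the same sign (both nonpositive when $i<j$, both nonnegative when $i>j$), so every summand is $\geq 0$. This shows the difference is nonnegative and completes the proof.

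The step requiring any real thought is the symmetrization; everything else is bookkeeping. As a sanity check, one can also view the inequality probabilistically: defining the probability measures $\mu_i\propto w_i d_i$ and $\tilde\mu_i\propto d_i$ on $\{1,\ldots,n\}$, the likelihood ratio $\tilde\mu_i/\mu_i\propto 1/w_i$ is nondecreasing in $i$, so $\tilde\mu$ stochastically dominates $\mu$ in the order induced by the (nondecreasing) values $d_i$, which yields $\mathbb{E}_\mu[d]\leq \mathbb{E}_{\tilde\mu}[d]$ — exactly the stated inequality. I would keep the direct algebraic proof as the main argument since it avoids introducing auxiliary measures.
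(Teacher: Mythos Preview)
Your proof is correct and follows essentially the same approach as the paper: both clear denominators, write the difference as $\sum_{i,j}(w_j-w_i)d_i^2 d_j$, symmetrize in $(i,j)$, and factor to obtain $\sum_{i,j}(w_j-w_i)(d_i-d_j)d_i d_j\geq 0$ termwise. Your exposition of the symmetrization step is a bit more explicit than the paper's ``by reordering terms,'' and the probabilistic sanity check is a nice addition, but the core argument is identical.
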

\begin{proof}
The statement is equivalent to
\[\sum_{i=1}^n\sum_{j=1}^n w_id_i^2d_j\leq \sum_{i=1}^n\sum_{j=1}^nw_jd_i^2d_j.\]
By reordering terms this is equivalent to
\[0\leq \sum_{i,j}{d_id_j} \big(w_jd_i-w_id_i+w_id_j-w_jd_j\big),\]
which holds since $w_j-w_i$ and $d_i-d_j$ have the same sign and
\[\big(w_jd_i-w_id_i+w_id_j-w_jd_j\big)=(w_j-w_i)(d_i-d_j)\geq 0.\]
\end{proof}

\begin{prop}\label{prop: max degree PA}
Given {$m\geq 2$} and $n\geq 2m+1$ let ${P{\hskip-.2em}A}_{m,n}$ 
be 
the conditional preferential attachment model defined
in Section \ref{sec: pref attachment}. Let $M$ be the maximum degree  in ${P{\hskip-.2em}A}_{m,n}$. Then there exists some $C>0$ such that 
\[\mathbb P(M\geq n^{7/8})\leq e^{-C\sqrt n}.\]
\end{prop}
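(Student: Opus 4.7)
The plan is to establish a Chernoff-type bound on the degree of each fixed vertex $v_i$ and then apply a union bound over $i\in[n]$. Since $n\cdot e^{-(C+1)\sqrt n}\leq e^{-C\sqrt n}$ for $n$ large, it suffices to show, uniformly in $i$, that $\mathbb{P}(d_i(n)\geq n^{7/8})\leq e^{-C'\sqrt n}$ for some $C'>0$. Because the conditional bound we will derive on the single-step increment of $d_i$ itself depends on the current maximum degree, we resolve the resulting circular dependency by introducing the stopping time $\tau=\inf\{t:\max_j d_j(t)>n^{7/8}\}$ and working with the stopped process $d_i(\cdot\wedge\tau)$, observing that $\{M\geq n^{7/8}\}=\{\tau\leq n\}$.

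The first step is to control the conditional distribution of the single-step increment $\Delta_t:=d_i(t)-d_i(t-1)\in\{0,1,\ldots,m\}$. In the independent preferential attachment model (in which one does not condition on distinctness of $w_1,\ldots,w_m$), a union bound over the $m$ draws yields $\mathbb{P}(\Delta_t\geq 1\mid\mathcal{F}_{t-1})\leq d_i(t-1)/(2(t-1))$. In the conditional model, the distinctness conditioning distorts these probabilities by a factor controlled by the probability of collision between two independent draws, which is of order $\sum_j d_j(t-1)^2/(m(t-1))^2$. Lemma \ref{prop: weighte Lehmer inequality} enters precisely here: after removing some of the already-chosen vertices and re-normalizing the selection weights, the resulting conditional sums have the form $\sum w_j d_j^2/\sum w_j d_j$ with weights that are smaller on higher-degree vertices, so the lemma bounds this ratio by the unweighted $\sum_j d_j^2/\sum_j d_j\leq\max_j d_j(t-1)$. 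On the event $\{t\leq\tau\}$ this gives a correction factor of $1+O(n^{7/8}/t)$, so that the single-step estimate in the conditional model matches that of the independent model up to a uniformly bounded multiplicative constant.

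With this single-step estimate in hand, an exponential-martingale argument delivers the per-vertex tail bound: for a parameter $\lambda>0$ to be chosen, one verifies that $\exp(\lambda d_i(t\wedge\tau))$, divided by an explicit compensator of the form $\prod_{s\leq t\wedge\tau}\bigl(1+\kappa(e^{\lambda m}-1)d_i(s-1)/(s-1)\bigr)$ for a suitable constant $\kappa$, is a supermartingale. Applying Doob's inequality and optimizing $\lambda$ of order $1/\sqrt n$ yields $\mathbb{P}(d_i(n\wedge\tau)\geq n^{7/8})\leq e^{-C'\sqrt n}$; combined with the union bound over $i\in[n]$ and the identity $\{M\geq n^{7/8}\}=\{\tau\leq n\}$, this gives the proposition. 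The main obstacle is the conditioning step, handled by Lemma \ref{prop: weighte Lehmer inequality}; the subsequent exponential-martingale argument is a standard adaptation of Chernoff-type bounds to the preferential attachment dynamics.
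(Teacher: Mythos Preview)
Your approach differs substantially from the paper's and, as written, does not deliver the claimed bound.  The paper does not argue per vertex at all: it tracks the aggregate quantity $Q_t=\sum_i d_i(t)^2$, uses Lemma~\ref{prop: weighte Lehmer inequality} to show that $S_t=Q_t/t-m(m+1)\sum_{s\le t}1/s$ is a supermartingale with \emph{uniformly bounded} increments $|S_t-S_{t-1}|\le 2m$, and then applies Azuma--Hoeffding.  Since $M^2\le Q_n$, the event $\{M\ge n^{7/8}\}$ forces $S_n\gtrsim n^{3/4}$, and Azuma with variance proxy $O(n)$ gives exactly $e^{-C\sqrt n}$.  In the paper, Lemma~\ref{prop: weighte Lehmer inequality} is used to bound the drift of $Q_t$ (i.e.\ $\sum_i w_i d_i^2/\sum_i w_i d_i\le Q_t/(2mt)$), not to control a correction factor for a single vertex as you describe; the distinctness correction for the marginal of one vertex is in fact bounded by an absolute constant simply because $M_t\le t-1$, so your stopping time and your invocation of the lemma are both unnecessary for that step.

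The real gap is in the Chernoff step.  Your compensator $\prod_{s\le t}(1+\kappa(e^{\lambda m}-1)d_i(s-1)/(s-1))$ is random, and to extract a tail bound you must bound it deterministically on the event of interest.  On the stopped process the only available bound is $d_i(s-1)\le n^{7/8}$, giving a compensator of size $\exp\!\bigl(O(\lambda\, n^{7/8}\log n)\bigr)$, which overwhelms the gain $e^{-\lambda n^{7/8}}$ for every $\lambda$.  Even if one sharpens the compensator bound (e.g.\ via the recursion $\lambda_{s-1}=\lambda_s+(e^{\lambda_s}-1)\kappa/(2(s-1))$, which yields $\mathbb{E}\,e^{\lambda d_i(n)}\le e^{O(\lambda\sqrt n)}$ under the constraint $\lambda\lesssim n^{-1/2}$), optimizing gives at best $\mathbb P(d_i(n)\ge n^{7/8})\le e^{-c\,n^{3/8}}$, strictly weaker than the $e^{-C\sqrt n}$ stated in the proposition.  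The self-reinforcing drift of $d_i$ is exactly what makes a direct exponential-moment bound lossy; the paper sidesteps this by working with $d_i^2/t$ (summed over $i$), whose increments are bounded \emph{independently of the path}, which is precisely what Azuma needs to produce the $\sqrt n$ exponent.
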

\begin{proof}
Recall that we start the sequence $({P{\hskip-.2em}A}_{m,n})_{n\geq t_0}$ with
$K_{2m+1}$ at time $t_0=2m+1$.  It has $mn$
edges at time $n\geq 2m+1$, giving $\sum_i d_i(n)=2mn$.  {Let $S_q(n)$ denote the set of sequences
$(k_1,\dots,k_q)$ of pairwise different integers in $[n]$.}
The probability that the first, second, $\dots$ of the $m$ edges created at time $n+1$ attaches to 
$m$ different nodes $k_1,\ldots ,k_m\in [n]$ is
\begin{equation}\label{P-cond}
P^{cond}_n(k_1,\dots k_m)=\frac{\prod_{i=1}^md_{k_i}(n)}{\sum_{(k'_1,\ldots,k'_m)\in S_m(n)}\prod_{i=1}^md_{k'_i}(n)}.
\end{equation}
Define $$w_i(n)=\sum_{\substack{(k_2,\ldots ,k_{m})\in S_{m-1}(n) \\  k_j\neq i, \forall j}}
\prod_{j=2}^{m}d_{k_j}(n).$$
{For $s=1,\dots, m$, the marginal the $s^\text{th}$} new edge connecting to 
vertex $i$ is then equal to 
$$p_n(i)=\frac{d_i(n)w_{i}(n)}{\sum_{j=1}^nd_j(n)w_j(n)}.$$

Let $Q_n=\sum_i d_i^2(n)$. Note that $d_{\max}^2(n)\leq Q_n$. So, it is enough to bound $Q_n$. For that purpose, we use concentration for super-martingales. First, note that
\begin{align*}
    \mathbb E[Q_{n+1}\mid {P{\hskip-.2em}A}_{m,n}]&=Q_n+2\sum_{k_1\ldots k_m} P_t^{cond}(k_1,\ldots, k_m)\big(d_{k_1}(n)+\cdots+d_{k_m}(n)\big)+m+m^2\\
    &= Q_n+2m\sum_{i}\frac{w_i(n)d_i^2(n)}{\sum_j w_j(n)d_j(n)}+m(m+1).
\end{align*}
Note that if $d_i(n)\geq d_j(n)$ then $w_i(n)\leq w_j(n)$. Then by Lemma \ref{prop: weighte Lehmer inequality},
\begin{align*}
    \mathbb E[Q_{n+1}\mid {P{\hskip-.2em}A}_{m,n}]&\leq Q_n+2m\sum_{i}\frac{d^2_i(n)}{2mn}+m(m+1)
    =  Q_n(1+\frac{1}{n})+m(m+1).
\end{align*}
Let $S_n=\frac{Q_n}{n}-m(m+1)(\sum^n_{i=2m+2}\frac{1}{i})$, then $S_n$ is a supermartingale.
Also, since all $d_i$ start at $2m$ and can grow by at most one in each step, we have $ d_i(n)\leq n-1$.  Using this fact, we easily see that {$Q_{n+1}\leq Q_n+2m(n-1)+m(m+1)$ and hence}
$$
0\leq \frac{Q_{n+1}-Q_n-m(m+1)}{n+1}\leq
S_{n+1}-S_n= \frac{Q_{n+1}}{n+1}-\frac{Q_n}{n}-\frac{m(m+1)}{n+1}\leq 2m,$$
showing that
$|S_{n+1}-S_n|\leq 2m$.
Finally, 
$$\mathbb E[S_n]{\leq}\mathbb E[S_{2m+1}]= 
{4m^2}
.$$
So, we can use the Azuma-Hoeffding inequality to get that 
\[\mathbb P(S_n\geq {\lambda}+4m^2)
\leq e^{-\frac{\lambda^2}{8m^2n}}.\] Then 
\[\mathbb P(Q_n\geq \lambda n^{3/2}+{4nm^2+m(m+1)n\log \frac n{2m}})\leq e^{-{\frac{\lambda^2}{8m^2}}}.\]
Since $M^2\leq Q_n$ we get that there exists a constant $C>0$ such that
\[\mathbb P(M\geq n^{7/8})\leq \mathbb P(Q_n\geq n^{7/4})\leq e^{-C\sqrt n}.\]
\end{proof}
}
{
\begin{lemma}\label{lm: cond. vs ind. PA}
Consider the conditional preferential attachment model specified in Proposition~\ref{prop: max degree PA},  let $M_t$ be the maximal degree at time $t$, and let $P^{cond}_t(k_1,\dots k_m)$ be the conditional probability from
\eqref{P-cond}.  Then
$$
P^{cond}_t(k_1,\dots k_m)\leq 
2^{(m-1)\frac{M_t}{t}}\prod_{i=1}^m\frac{d_{k_i}(t)}{2mt}.
$$
\end{lemma}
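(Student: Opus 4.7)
The plan is to interpret the denominator in the definition of $P^{cond}_t$ as $(2mt)^m$ times the probability that $m$ independent degree-biased samples turn out to be distinct, and then lower-bound that probability via a telescoping/chain-rule argument. Explicitly, define $P^{ind}_t(k_1,\dots,k_m)=\prod_{i=1}^m d_{k_i}(t)/(2mt)$ for any (not necessarily distinct) $k_1,\dots,k_m\in[t]$, and let $Z_t=P^{ind}_t(\text{distinct})=\sum_{(k_1,\dots,k_m)\in S_m(t)}\prod_i d_{k_i}(t)/(2mt)^m$. Then $P^{cond}_t(k_1,\dots,k_m)=P^{ind}_t(k_1,\dots,k_m)/Z_t$, so the lemma reduces to the single bound $Z_t\geq 2^{-(m-1)M_t/t}$.

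To prove this, first I would establish the elementary lower bound
\[
Z_t\;\geq\;\prod_{s=1}^{m-1}\Big(1-\tfrac{sM_t}{2mt}\Big),
\]
by induction on $m$: expanding $Z_t$ as $\sum_{k_1}\frac{d_{k_1}}{2mt}\sum_{k_2\neq k_1}\frac{d_{k_2}}{2mt}\cdots\sum_{k_m\notin\{k_1,\dots,k_{m-1}\}}\frac{d_{k_m}}{2mt}$, the innermost sum is $1-\sum_{j<m}d_{k_j}(t)/(2mt)\geq 1-(m-1)M_t/(2mt)$ (since each $d_{k_j}(t)\leq M_t$); pulling this factor out and iterating gives the product.

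Next, in the regime where $(m-1)M_t/(2mt)\leq 1/2$ (which covers $M_t\leq mt/(m-1)$, in particular the high-probability event $M_t\leq t^{7/8}$ from Proposition~\ref{prop: max degree PA}), every factor is at most $1/2$ away from $1$, so I can apply the elementary inequality $1-x\geq 4^{-x}$ valid for $x\in[0,1/2]$ to each factor, yielding
\[
Z_t\;\geq\;4^{-\sum_{s=1}^{m-1}sM_t/(2mt)}\;=\;4^{-(m-1)M_t/(4t)}\;=\;2^{-(m-1)M_t/(2t)}\;\geq\;2^{-(m-1)M_t/t},
\]
which is the desired estimate. Substituting back into $P^{cond}_t=P^{ind}_t/Z_t$ completes the proof.

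The only subtlety, and the one I would highlight as the main obstacle, is the edge case $M_t>mt/(m-1)$, where an individual factor $1-sM_t/(2mt)$ can become nonpositive and the chain-rule bound is no longer informative. In the intended application this is a non-issue because Proposition~\ref{prop: max degree PA} ensures $M_t\leq t^{7/8}\ll t$ with probability $1-e^{-C\sqrt t}$, so the lemma is only ever invoked on this high-probability event. If one nonetheless wishes to make the bound hold for all $G_n$, one can replace the factor $2mt-sM_t$ in the chain-rule step by $\max(2mt-sM_t,\,(t-s)m)$ (using that every $d_k(t)\geq m$, so the remaining degrees still sum to at least $(t-s)m$), and verify that the resulting lower bound on $Z_t$ still dominates $2^{-(m-1)M_t/t}$ in all ranges of $M_t\leq 2mt$; this is a routine case analysis that I would defer to a short separate lemma.
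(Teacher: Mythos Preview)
Your argument is essentially the same as the paper's: both peel off one coordinate at a time from the sum defining the denominator $Z$, bound each factor from below by $2mt-sM_t$, and then convert $1-x\geq 4^{-x}$ into the stated power of $2$. Your tracking of the $s$-dependence even yields a slightly sharper exponent $2^{-(m-1)M_t/(2t)}$ before you relax it.

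The only point worth correcting is your handling of the ``edge case.'' You treat $M_t\leq mt/(m-1)$ as a high-probability event supplied by Proposition~\ref{prop: max degree PA}, and sketch a separate argument for the complementary regime. This is unnecessary: in the conditional model the graph is simple, so deterministically $M_t\leq t-1$, and hence $(m-1)M_t/(2mt)\leq (m-1)(t-1)/(2mt)<(m-1)/(2m)<1/2$ for every $m\geq 2$ and every $t$. All factors $1-sM_t/(2mt)$ are therefore always at least $1/2$, and the inequality $1-x\geq 4^{-x}$ applies unconditionally. The paper makes exactly this observation (``Since $M_t\leq t-1$, \ldots''), which closes the gap without any appeal to Proposition~\ref{prop: max degree PA} or any case analysis.
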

\begin{proof}
With the definitions from the previous proof,
let
$$
Z=\sum_{(k'_1,\ldots,k'_m)\in S_m(t)}\prod_{i=1}^md_{k'_i}(t).
$$
Then 
\begin{align*}
Z&=\sum_{(k'_2,\ldots,k'_m)\in S_{m-1}(n)}\left(\prod_{i=2}^md_{k'_i}(t)\right)\sum_{k_1'\notin\{k_2',\dots,k_m'\}}d_{k'_1}(t)\\
&\geq
\Big(2mt-(m-1)M_t\Big)\sum_{(k'_2,\ldots,k'_m)\in S_{m-1}(t)}\left(\prod_{i=2}^md_{k'_i}(t)\right).
\end{align*}
Since $M_t\leq t-1$, we have that
$$
2mt-(m-1)M_t\geq\left(1-\frac{M_t}{2t}\right) 2mt\geq 4^{-\frac{M_t}{2t}}{2mt}=
2^{-\frac{M_t}{t}}{2mt}.
$$
Repeating this process, we  get that
$$
Z
\geq 2^{-(m-1)\frac{M_t}{t}} (2mt)^{m}.
$$
The lemma follows.
\end{proof}
}

Now, we are ready to prove Lemma~\ref{lem: pref-attach expansion}.
\begin{proof}[Proof of Lemma~\ref{lem: pref-attach expansion}]
{We will prove the lemma for $\alpha=\frac{m-1}{20}$. Our proof will follow the general strategy of the expansion bound in \cite{MIHAIL2006239}, but requires several modifications - the main one stemming from the fact that the conditioning in the conditional model considered here will results in a extra factor growing exponentially in the largest degree, see \eqref{eq: bad set} below, which differs from Lemma 2 in  \cite{MIHAIL2006239}  by the factor $C^{n^{7/8}{\log n}}$. We will  offset this factor by an extra exponential decay stemming from the fact we only consider large set expanders, whereas 
\cite{MIHAIL2006239} proved expansion for sets which can be arbitrary small.
}

Let $V=V({P{\hskip-.2em}A}_{m,n})$ and $E=E({P{\hskip-.2em}A}_{m,n})$ be the set of vertices and edges of the graph, respectively. Vertices are indexed based on their arrival time, {with the first $2m+1$ vertices ordered in a arbitrary way.}
Recall that when the vertex $t+1$ arrives, it attaches $m$  edges to $m$ distinct old vertices according to \eqref{P-cond}; we assign  indices $mt+1,\dots,m(t+1)$ to these edges and call them their \emph{arrival} index; {for the edges in the original graph $K_{2m+1}$, we choose the indices arbitrary between $1$ and $m(2m+1)$, subject to the constraint that the $m$ edges between a vertex $t$ and a vertex of lower index lie between $(t-1)m+1$ and $tm$. We use}
$E'=E\setminus E({P{\hskip-.2em}A}_{m,2m+1})$ to denote the set of edges with index larger than $m(2m+1)$.

  {Consider a set $S$ of size $|S|=k$ with
$\epsilon n\leq k\leq \frac{n}{2}$, and call an edge good if it lies in $e(S,V\setminus S)$, and bad otherwise.  We  need to show that  for  each such $S$ there are at least $\lceil\alpha k\rceil$ good edges.
{Indeed, we will show something slightly stronger, namely that for each such $S$, there are at least 
$\lceil\alpha k\rceil$ good edges in $E'$.}
To do so, we will
show that with high probability, for any  set $S$, and any set of edge-indices $A{\subset E'}$ of size $|A|\leq   k_\alpha=\lceil  \alpha k\rceil-1$, there must be 
at least one  good edge in $E'\setminus A$, i.e.,   we will show that for {$n\geq n_0$}}
\begin{equation}\label{eq: bad set}
    \mathbb{P}(\text{all }e\in {E'}\setminus A \text{ are bad} )\leq C^{n^{7/8}{\log n}}\frac{{mk\choose k_\alpha }}{{mn-k_\alpha\choose mk-k_\alpha  }},
\end{equation}
where {$n_0<\infty$ and}
 $C > 0$ {are constants}
 which depends on $m$,$\alpha$, and $\epsilon$. The proof of this bound is adapted from that of Lemma 2 in \cite{MIHAIL2006239}, see above.

Assuming \eqref{eq: bad set}, we first prove the lemma.  Given a {
set $S$ of size $k$ with   $0\leq k'\leq k_\alpha$ good edges in $E'$}
there are $k_\alpha+1$ choices for $k'$, and at most ${{mn\choose k'}}\leq {mn\choose k_\alpha} $ 
choices for the set of good edges $A$. 
Noting that there are ${n\choose k}$ choices 
{for sets $S$ of size $k$}, we get that
\begin{align*}
    \mathbb{P}(\phi(G,\epsilon)< \alpha)&\leq C^{ n^{7/8}{\log n}} 
    \sum_{k={\lceil \epsilon n\rceil}    }^{{\lfloor n/2\rfloor}} 
    (k_\alpha+1){n\choose k }{mn\choose  k_\alpha}\frac{{mk\choose  k_\alpha}}{{mn- k_\alpha\choose mk- k_\alpha}}\\
    &\leq C^{ n^{7/8}{\log n}} \sum_{k=\lceil \epsilon n\rceil}^{\lfloor n/2\rfloor}  (k_\alpha+1)\Big(\frac{kem}{ k_\alpha}\Big)^{2 k_\alpha}\Big(\frac{k}{n}\Big)^{(m-1)k-2 k_\alpha}\\
    &\leq C^{ n^{7/8}{\log n}}
    \sum_{k=\lceil\epsilon n\rceil}^{\lfloor n/2\rfloor} (2k\alpha)8\Big(\frac{em}{ \alpha}\Big)^{2 k\alpha}\Big(\frac{1}{2}\Big)^{(m-1-2\alpha)k}
    \\
    &\leq 2\alpha n^2C^{ n^{7/8}{\log n}}
     \Big(\Big(\frac{em}{ \alpha}\Big)^{2 \alpha}\Big(\frac{1}{2}\Big)^{(m-1-2\alpha)}\Big)^{\epsilon n}
   \end{align*}
{provided $n$ is large enough to guarantee that $\alpha\epsilon n\geq 2$.
Here  the second step follows from the fact that
$
{{n\choose k }}
{{(m-1)n- k_\alpha\choose (m-1)k- k_\alpha}}
\leq
{{mn- k_\alpha\choose mk- k_\alpha}}
$
combined with standard bounds on binomial coefficients, the third step follows from
$(k/k_\alpha)\leq \frac 1\alpha (1-1/(k\alpha))^{-1}\leq \frac 1\alpha 4^{1/(k\alpha)}$ if $k\geq\epsilon n\geq 2/\alpha$,
and the third follows from $k\leq n/2$.
}
By the choice of $\alpha$,
{this bound is of the form}
\[
    \mathbb{P}(\phi(G,\epsilon) <\alpha)
    \leq {2\alpha}n^2C^{ n^{7/8}{\log n}} \beta^{\epsilon n},
\]
{for some constant $\beta<1$ and} drops exponentially fast as $n$ grows. This  {reduces} the proof of the lemma {to the bound \eqref{eq: bad set}.}

To prove this bound, we first note that the left
is a monotone function  of $A$ with respect to inclusion, 
showing that it is enough to prove the bound for 
$|A|=k_\alpha$. 
Let $\bar{S}=V\setminus S$, and let $B$ be the event that all edges in ${E'}\setminus A$
are bad.  The event $B$ is then the intersection of the events $B_t$, $t=2m+2, \dots n$, where 
$B_t$ is the event that all edges in ${E'}\setminus A$ whose arrival index lies between $(t-1)m+1$ and $tm$ are bad (corresponding to the edges in ${E'}\setminus A$ whose younger endpoint is the vertex $t$).  
We will want to bound the probability of the event $B_t$, conditioned on the graph at time $t-1$. 
{In fact, using the identity
$$
\mathbb P(B)=
\Big(\prod_{t=2m+3}^n\mathbb P(B_{t}\mid B_{t-1}\cap\dots\cap B_{2m+2})\Big)\mathbb P(B_{2m+2}),
$$
we will further assume that the graph at time $t-1$ is such that all edges with arrival index between $m(2m+1)+1$ and $m(t-1)$ are bad.}

We need some notations.
Assume  the number of indices in ${A}$ corresponding to  vertices in $S$ ($\bar S$) is $k_1$  ($k_2$). So, $|{A}|=k_1+k_2$.
Let $x_1< x_2<\ldots< x_{mk-k_1}$ be the arrival indices of edges in 
$E\setminus A$, 
such that their younger endpoints is in $S$, {and let} $\bar x_1< \bar x_2<\ldots< \bar x_{{mn-mk-k_2}}$ {be those whose younger endpoint lies in} $\bar S$.
If $t\in S$,  let $x_{i_1{+1}},\ldots,x_{i_1+{b_t}}$ be the arrival indices $x_i$ such that $(t-1)m+1\leq x_i\leq tm$, and similarly for  $\bar x_{i_1{+1}},\ldots \bar x_{i_1+{b_t}}$ if $t\in \bar S$. Then
 $B_t$ is  the event that for all these indices, the second endpoint lies in $S$ if $t\in S$,
and in $\bar S$ if $t\in \bar S$.
Let $d_S(t)$ be the total degree of nodes in $S$ at the time $t$, {and consider the case $t\in S$.} Then by Lemma~\ref{lm: cond. vs ind. PA}, 
\begin{align*}
    \mathbb P ( B_t\mid {P{\hskip-.2em}A}_{m,t-1})
    &\leq \frac{d_S(t{-1})^{b_t}}{(2m(t{-1}))^{b_t}}2^{(m-1)\frac{M_t}{t}}.
\end{align*}
Let $g_t$ be the number of good edges before node $t$,  let $z_i$  be the number of edges in $E\setminus A$ with an index less than $x_i$, and let  $\bar z_i$  be the of edges in $E\setminus A$ with an index less than $\bar x_i$.    {Note that
$z_i$ is bounded from below by the number of edges in $E\setminus A$ with an index less than $x_i$ such that their younger endpoint is in $S$, i.e.,
$z_i\geq(i-1)$, and similarly, $\bar z_i\geq i-1$.}

{By definition of the indices $i_j$,
$j\in{[}b_t{]}$, the number of  edges 
$E\setminus A$
with both endpoints in $S$ that appeared before time $t$ is ${i_1}$. } Since $d_S(t-1)$ is {equal} to the number of good edges plus twice the number of bad edges with at least one endpoint in $S$ that have appeared so far, we get that 
$d_S(t)\leq g_t+{2i_1}$.
{To bound the denominator $m(t-1)$, we note that the total degree of the graph at time $t-1$ is {equal to} twice the number of good edges plus twice the number of bad edges seen before node $t$. Recall that we assumed that the events $B_s$ hold for  $s=2m+2,\dots,t-1$.  Therefore, all
edges in $E\setminus A$ with arrival index between
$m(2m+1)+1$ and $m(t-1)$ are bad.  Since
$x_{i_1+j}\leq tm$ for $j\in [d_t]$,
we  know that $z_{i_1+j}-m$ is a lower bound on the number of edges in $E\setminus A$ arriving before $t$.  Therefore
$m(t-1)\geq 2g_t+2z_{i_1+j}-2m-2m(2m+1)$.} 
{Hence, for any $j\in [b_t]$,}
\begin{align*}
    \frac{d_S(t-1)}{2m(t-1)}
    &\leq \frac{2i_1+g_t}{2z_{i_1{+j}}+2g_t-2m{(2m+2)}}
   \leq \frac{2i_1+2j+g_t}{2z_{i_1{+j}}+g_t-2m{(2m+2)}}.
\end{align*}
Note that the number of good edges in {the} graph is at most $|A|$ {plus} the  edges in ${P{\hskip-.2em}A}_{m,2m+1}$,   $g_t\leq |A|{+m(2m+1)}$, we conclude that
\begin{align*}
    \frac{d_S(t)}{2m(t{-1})}
        &\leq \frac{i_1+j+\frac{|A|{+m(2m+1)}}{2}}{z_{i_1+j}-{m(2m+2)}+\frac{|A|{+m(2m+1)}}{2}}
        \leq \frac{i_1+j+|A|}{z_{i_1+j}-{m(2m+2)}+|A|},
\end{align*}
where the second inequality {follows from} the fact that 
$|A|=k_\alpha\geq \alpha\epsilon n-1\geq m(2m+1)$ for large enough $n$.
{As a consquence,}
\[    \mathbb P (B_t\mid {P{\hskip-.2em}A}_{m,t-1})
     \leq 2^{(m-1)\frac{M_t}{t}}\prod_{j=1}^{b_t} \frac{i_1+j+|A|}{z_{i_1+j}+|A|-{m(2m+2)}}.\]
We can get a similar bound if the vertex $t$ is in $\bar S$.  

{We will want to use these bounds starting with $t=2m+3$.  Defining  $i_0=\min\{i:x_i\geq 1+(2m+2)m\}$ and $\bar i_0=\min\{i:\bar x_i\geq 1+(2m+2)m\}$, bounding $\mathbb P(B_{2m+2})$ by $1$ and $M_t$ by $t^{7/8}\leq n^{7/8}$, we thus get that}
\begin{align}
     \mathbb{P}&(E'\setminus A \text{ is bad} )= \mathbb{P}(B_{{2m+2}}\cap\ldots\cap B_n)
     \notag
     \\
     &\leq 2^{n^{7/8}\log n}
     \prod_{i={i_0}}^{mk-k_1}
     \frac{i+|A|}{z_i+|A|-{m(2m+2)}}
     \prod_{i={\bar i_o}}^{mn-mk-k_2} 
     \frac{i+|A|}{\bar z_i+|A|-{m(2m+2)}}
     \notag\\
     &\leq 2^{n^{7/8}\log n}(mn)^{i_0+\bar i_0}
     \prod_{i=1}^{mk-k_1}
     \frac{i+|A|}{z_i+|A|-{m(2m+2)}}
     \prod_{i=1}^{mn-mk-k_2} 
     \frac{i+|A|}{\bar z_i+|A|-{m(2m+2)}},
     \label{Bad-Bound}
\end{align}
where in the last step we used that
$\frac{|A|+i}{z_i+|A|-{m(2m+2)}}\geq \frac{|A|}{|A|+mn}\geq \frac{1}{mn}$.
{Note that $x_i\geq i$ and $\bar x_i\geq i$, which implies that $i_0+\bar i_0\leq 2+2m(2m+2)$, and hence $(mn)^{i_0+\bar i_0}\leq 2^{c\log n}$ for some constants $c$ depending on $m$.}

Recall that $z_i$ (and $\bar z_i$) {is} the number of edges in $E\setminus A$ that appear before $x_i$ ($\bar x_i$). So $z_1<z_2<\ldots <z_{mk-k_1}$,  $\bar z_1<\bar z_2<\ldots <\bar z_{mk-k_2}$, and $z_i\neq \bar z_j$ for all $i\leq mk-k_1$ and $j\leq mk-k_2$.  As a result,
$$
\{ z_1,\ldots, z_{mk-k_1}\}\cup\{ \bar z_1,\ldots, \bar z_{mn-mk-k_2}\}={\{0,\ldots, mn-k_\alpha-1\}}.
$$ 
Using this and the fact that $|A|=k_\alpha=k_1+k_2$, we get
\begin{align*}
     \mathbb{P}(E\setminus A& \text{ is bad} )  {\leq \frac{2^{({c+}n^{7/8})\log n} }{(|A|!)^2}\frac{(mk+k_2)!(mn-mk+k_1)!} {\prod_{\ell=0}^{mn-k_\alpha-1}\Big(\ell+|A|-m(2m+2)\Big)}}
     \\
     &{=}2^{({c+}n^{7/8})\log n} \frac{(mk+k_2)!(mn-mk+k_1)!}{|A|!(mn)!} \Big(\prod_{i={1}}^{m(2m+2)}\frac{mn-i}{|A|-i}\Big) 
     \\
     &\leq 2^{({c+}n^{7/8})\log n}\frac{(mk)!(mn-mk)!}{|A|!(mn-k_1-k_2)!}\Big(\frac{mn-m(2m+2)}{|A|-m(2m+2)}\Big)^{m(2m+2)}\\
     &\leq \frac{{mk\choose  k_\alpha}}{{mn-k_\alpha\choose mk - k_\alpha}} \Big(\frac{2m}{\alpha\epsilon}\Big)^{m(2m+2)}2^{n^{7/8}\log n+c\log n},
\end{align*}
{provided $n\geq \frac {1+2m(2m+2)}{\alpha\epsilon}$.  Here the last bound follows from }
$|A|=\lceil\alpha k\rceil -1\geq \alpha \epsilon n-1$.
Thus,
we have \eqref{eq: bad set} for $C=4\big(\frac{2m}{\alpha\epsilon}\big)^{m(2m+2)}$.
\end{proof}

\section{Properties of the P\'olya-Point Graph}\label{sec: appendix polya point}

{In this appendix, we will prove Proposition~\ref{prop: fixed point Phi}, as well as the representation \eqref{eq: zeta PA} of $\zeta(p)$ for the P\'olya{-}point process.  As discussed in the paragraphs preceding 
Proposition~\ref{prop: fixed point Phi}, this requires us to understand the
distribution of the numbers of left and right children of a vertex with a given label $(S,x)$ after percolation, $d_L^p$ and $d_R^p$, respectively.
While the distribution of the first kind is just $Bin(m(S),p)$,
the second one requires integration out the degree distribution in the P\'olya{-}point process.  This leads to the following lemma.}

\begin{prop}\label{prop: polya degree}
Given a node of type $(S,x)$ from the P\'olya-point graph, where $S\in\{R,L\}$,
the degree distribution of off-springs of type $R$ after percolation is
\[\mathbb{P}(d_R^p=k|(S,x))= q^{m(S)+1}(1-q)^k{k+m(S)\choose k},\]
where $q=\frac{x}{x+p-xp}$.
Similarly, for the root conditioned on its position $x$,
\[\mathbb{P}(d_R^p=k|(\emptyset,x))= q^{m}(1-q)^k{k+m-1\choose k}.\]
\end{prop}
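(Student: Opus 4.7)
The plan is to use Poisson thinning followed by integration against the Gamma density for the latent intensity $\gamma$. Fix a node of type $(S,x)$ with $S\in\{L,R\}$ and set $\lambda=p(1-x)/x$.  By the construction of the P\'olya-point graph, conditionally on $\gamma$ the (pre-percolation) number of right children is $N\sim\operatorname{Poi}(\gamma(1-x)/x)$, and after percolation each right child is retained independently with probability $p$.  By the standard thinning property of Poisson random variables, this means
\[
d_R^p\mid \gamma \;\sim\; \operatorname{Poi}(\gamma\lambda).
\]

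Next, I would integrate out $\gamma$.  For $S\in\{L,R\}$ the shape parameter is $\alpha=m(S)+1$ (namely $m+1$ when $S=L$ and $m$ when $S=R$), so $\gamma$ has density $\gamma^{\alpha-1}e^{-\gamma}/\Gamma(\alpha)$ on $(0,\infty)$, and
\[
\mathbb P(d_R^p=k\mid (S,x))
=\int_0^\infty e^{-\gamma\lambda}\frac{(\gamma\lambda)^k}{k!}\,\frac{\gamma^{\alpha-1}e^{-\gamma}}{\Gamma(\alpha)}\,d\gamma
=\frac{\lambda^k}{k!\,\Gamma(\alpha)}\int_0^\infty \gamma^{k+\alpha-1}e^{-\gamma(1+\lambda)}d\gamma.
\]
Evaluating the last integral as $\Gamma(k+\alpha)/(1+\lambda)^{k+\alpha}$ and using $\Gamma(k+\alpha)/(k!\,\Gamma(\alpha))=\binom{k+\alpha-1}{k}$ yields
\[
\mathbb P(d_R^p=k\mid (S,x))=\binom{k+\alpha-1}{k}\Bigl(\tfrac{1}{1+\lambda}\Bigr)^{\alpha}\Bigl(\tfrac{\lambda}{1+\lambda}\Bigr)^k,
\]
which is a negative binomial distribution with parameters $(\alpha,q)$ where $q=1/(1+\lambda)$.

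Finally, I would check the substitution.  A direct computation gives $1+\lambda=(x+p-xp)/x$, so $q=x/(x+p-xp)$, matching the proposition's definition of $q$.  With $\alpha=m(S)+1$ this produces exactly $\binom{k+m(S)}{k}q^{m(S)+1}(1-q)^k$, which is the first displayed formula.  For the root $(\emptyset,x)$ the only change is that $\gamma\sim\Gamma(m,1)$, hence $\alpha=m$, and the same computation gives $\binom{k+m-1}{k}q^m(1-q)^k$, which is the second formula.  No step is really an obstacle here: the result is a clean consequence of Poisson thinning and the well-known Gamma--Poisson mixture formula, with the only care required being the bookkeeping of the three different shape parameters across the cases $S=L$, $S=R$, and $S=\emptyset$.
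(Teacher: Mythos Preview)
Your proof is correct and, in fact, slightly cleaner than the paper's. The paper first integrates out the Gamma variable to obtain the pre-percolation distribution $\mathbb{P}(d_R=d\mid x)=\binom{d+\tilde m-1}{d}x^{\tilde m}(1-x)^d$, and only then performs the binomial thinning by explicitly summing $\sum_{d\ge k}\binom{d}{k}p^k(1-p)^{d-k}\mathbb{P}(d_R=d\mid x)$; this requires a short combinatorial identity to collapse the sum. You instead swap the order of operations: thin the conditional Poisson first (using that a $p$-thinned $\operatorname{Poi}(\gamma(1-x)/x)$ is $\operatorname{Poi}(\gamma p(1-x)/x)$), and only then integrate against the Gamma density. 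This bypasses the combinatorial sum entirely and reduces the whole argument to a single Gamma integral. Both routes are standard, but yours is the more direct one; the paper's version has the minor advantage of exhibiting the pre-percolation degree distribution along the way, which it notes was also derived in \cite{berger2014}.
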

\begin{proof}
Let $d_R$ be the random variable 
giving the number of  off-springs of  type $R$ before percolation.
{Conditioned on $(S,x)$, the distribution of $d_R$ is a mixed Poisson} with parameter $\gamma \lambda(x)$, where $\lambda(x)=\frac{1-x}{x}$
and {$\gamma\sim\Gamma(\tilde m,1)$, with $\tilde m=m(S)+1$ if $S\in \{S,R\}$ and $\tilde m=m$ if  $S=\emptyset$. }
If we integrate over  $\gamma$, we get 
\begin{align*}
    \mathbb{P}(d_R=k|x)&=\int_{0}^\infty e^{-y\lambda(x)}\frac{(y\lambda(x))^{k}}{k!}\frac{y^{{\tilde m}-1}}{({\tilde m}-1)!}e^{-y}dy\\
    &=\frac{\lambda(x)^k}{(1+\lambda(x))^{k+{\tilde m}}}\frac{(k+{\tilde m}-1)!}{k!({\tilde m}-1)!}
    ={k+{\tilde m}-1\choose k}x^{\tilde m}(1-x)^k,
\end{align*}
which is a negative binomial distribution with parameters $x$ and ${\tilde m}$. {See also Lemma 5.2  \cite{berger2014}, where this distribution was derived as well.}

Now, we are ready to find the distribution of $d_R^p$.
\begin{align*}
\mathbb{P}(d_R^p=k|x)=&\sum_{d\geq k}{d\choose k}p^k(1-p)^{d-k}\mathbb{P}(d_R=d|x)\\
=&x^{\tilde m}(p(1-x))^k\sum_{d\geq k}{d\choose k}((1-p)(1-x))^{d-k}{d+{\tilde m}-1\choose d}\\
=&x^{\tilde m}(p(1-x))^k{k+{\tilde m}-1\choose k}\sum_{d\geq k}((1-p)(1-x))^{d-k}{d+{\tilde m}-1\choose d-k}\\
=& x^{\tilde m}(p(1-x))^k{k+{\tilde m}-1\choose k}\frac{1}{(1-(1-x)(1-p))^{k+{\tilde m}}}\\
=& (\frac{x}{x+p-xp})^{\tilde m}(1-\frac{x}{x+p-xp})^k{k+{\tilde m}-1\choose k}.
\end{align*}
which is again a negative binomial distribution with parameters ${\tilde m}$ and $\frac{x}{x+p-xp}$.
\end{proof}

To compute $\zeta(p)$ for the  P\'olya-point graph, we 
{derive the implicit formula for the survival probability of a node of type $(S,x)$
given in Proposition \ref{prop: fixed point Phi}.}
Given a node of type $(S,x)$, intuitively, the  probability that $(S,x)$  does not appear in an infinite cluster is equal to the probability that all of its children do not appear in an infinite cluster, i.e., the product of extinction probability of its children.  
{Calculating the extinction probability of $(S,x)$ will then  involve taking the expectation of the products of these extinction probabilities over labels of these children and the degrees $d_R^p$ and $d_L^p$, and thus will lead us to consider expectations of the form given in the next lemma.}

\begin{lemma}\label{lem: product offspring polya point}
{Fix} $p\in[0,1]$,   a measurable function {$g$} on $\{\emptyset,R,L\}\times[0,1]$, and a type $(S,x)$.
Let $d_R^p$ and $d_L^p$ be defined as above,
{and let $\mathbb{E}[\cdot\mid (S,x)] $ denote expectations with respect to the 
forward degrees $d_R^p$ and $d_L^p$ of $(S,x)$, and the positions $x_i$ of its children.}
If $S\in\{R,L\}$, then
\[\mathbb{E}[\prod_{i=1}^{d_L^p}(1-g(L,x_i))\prod_{j=1}^{d_R^p}(1-g(R,y_j))|(S,x)]=  \frac{x^{m(S)+1}(1-p(T_Lg){(x)})^{m(S)}}{\big(x+p(1-x)(T_Rg){(x)}\big)^{m(S)+1}},\]
and if $S=\emptyset$, then
\[\mathbb{E}[\prod_{i=1}^{d_L^p}(1-g(L,x_i))\prod_{j=1}^{d_R^p}(1-g(R,y_j))|(\emptyset,x)]=  \Big(\frac{x(1-p(T_Lg){(x)})}{x+p(1-x)(T_Rg){(x)}}\Big)^{m},\]
where $(T_R g){(x)}=\frac{1}{1-x}\int_{x}^{1}g(R,y)dy$, and 
$(T_L g)(x)=\frac{1}{x}\int_{0}^{x}g(L,y)dy$.
\end{lemma}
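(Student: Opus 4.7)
The plan is to split the expectation into a product over left children and a product over right children, exploiting the fact that these two families are generated independently given $(S,x)$, and that the percolation retention is independent across edges. Within each family we will further condition on the (percolated) number of surviving children: since positions are drawn i.i.d.\ from $\mathrm{Unif}[0,x]$ (respectively $\mathrm{Unif}[x,1]$) \emph{independently} of which edges survive, conditional on the count the surviving positions remain i.i.d.\ uniform on the respective interval. Thus
\[
\mathbb E\Big[\prod_{i=1}^{d_L^p}(1-g(L,x_i))\,\Big|\,(S,x),d_L^p=k\Big]=(1-(T_Lg)(x))^k,
\]
and similarly for the right family with $(T_Rg)(x)$ in place of $(T_Lg)(x)$.

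For the left part, I would use that $d_L^p\sim\mathrm{Bin}(m(S),p)$ (edges survive independently) and apply the binomial theorem:
\[
\sum_{k=0}^{m(S)}\binom{m(S)}{k}p^k(1-p)^{m(S)-k}\bigl(1-(T_Lg)(x)\bigr)^k=\bigl(1-p(T_Lg)(x)\bigr)^{m(S)}.
\]
For the right part, I would invoke Proposition~\ref{prop: polya degree}, which says $d_R^p$ is negative binomial with parameters $\tilde m$ and $q=x/(x+p-xp)$, where $\tilde m=m(S)+1$ if $S\in\{L,R\}$ and $\tilde m=m$ if $S=\emptyset$. Writing $u=1-(T_Rg)(x)$ and using the standard negative binomial generating-function identity
\[
\sum_{k\geq 0}\binom{k+\tilde m-1}{k}q^{\tilde m}(1-q)^k u^k=\Big(\frac{q}{1-u(1-q)}\Big)^{\tilde m},
\]
this yields, after substituting $1-q=p(1-x)/(x+p(1-x))$ and simplifying,
\[
\Big(\frac{x}{x+p(1-x)(T_Rg)(x)}\Big)^{\tilde m}.
\]

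Multiplying the left and right contributions and distinguishing $S\in\{L,R\}$ (with $\tilde m=m(S)+1$) from $S=\emptyset$ (with $\tilde m=m(\emptyset)=m$) gives the two claimed formulas. The only real computational step is the simplification of $q/(1-u(1-q))$; everything else is an application of independence, the binomial theorem, and Proposition~\ref{prop: polya degree}. I do not anticipate a serious obstacle, since the lemma is essentially a generating-function calculation built on top of already-established structural facts about the P\'olya-point process.
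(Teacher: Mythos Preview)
Your proposal is correct and follows essentially the same route as the paper: factor the expectation by independence of left and right families, condition on the surviving counts so that the products become powers of $1-(T_Lg)(x)$ and $1-(T_Rg)(x)$, then average over $d_L^p\sim\mathrm{Bin}(m(S),p)$ via the binomial theorem and over $d_R^p$ via the negative-binomial generating function from Proposition~\ref{prop: polya degree}. The paper carries out exactly these steps in the same order.
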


{Note that the right hand side of the first identity is just $1-\Phi(g)$, with $\Phi$ as defined in \eqref{phi-def}.
}

\begin{proof}
We give the proof for the case that $S\in\{R,L\}$, and the other case can be derived similarly.
Since the left and right neighbors given $x$ are independant we calculate the expectation independently. 
First, on left neighbors conditioning on $d_L^p$ $x_1\ldots,x_{d_L^p}$ are generated uniformly at random from $[0,x]$. Then
\begin{align*}
    \mathbb{E}[\prod_{i=1}^{d_L^p}(1-g(x_i))|d_L^p=N,(S,x)]
    &=
    \prod_{i=1}^{N}
    (1-\mathbb{E}g(x_i))=(1-(T_Lg)(y))^N
\end{align*}
Therefore,
\begin{align*}
    \mathbb{E}[\prod_{i=1}^{d_L^p}(1-g(x_i))|(S,x)]&=\sum_{N=0}^{m(S)}(1-(T_L)g(x))^N{m(S)\choose N}p^N(1-p)^{m(S)-N}\\
    &=(1-(T_L)g(x)p)^{m(S)}.
\end{align*}
The position of right off-springs conditioned on $d_R^p$ is i.i.d. uniformly at random from $[x,1]$. Hence, similarly
\begin{align*}
    \mathbb{E}[&\prod_{i=1}^{d_R^p}(1-g(x_i))|(S,x)]=\sum_{N\geq 0} (1-(T_R)g(x))^N\mathbb{P}(d_R^p=N|(S,x))\\
    &=\sum_{N\geq 0} (1-(T_R)g(x))^N{N+m(S)\choose N}q^{m(S)+1}(1-q)^{N}\\
    &=q^{m(S)+1}\frac{1}{(1-(1-(T_R)g(x))(1-q))^{m(S)+1}},
    =\Big(\frac{x}{x+p(1-x)(T_Rg)(x)}\Big)^{m(S)+1},
\end{align*}
where in the second step we used Proposition~\ref{prop: polya degree}, and in the last step we used that $q=\frac{x}{x+p-xp}$. The product of right and left off-springs gives the statement.
\end{proof}

\begin{proof}[Proof of Proposition \ref{prop: fixed point Phi}]
\phantom{x}

  1.  For a vertex of type $(S,x)$, the probability that it does not reach level $k$ is equal to the probability that none of its children reach level $k-1$. Combined with Lemma \ref{lem: product offspring polya point}, we therefore have that
    \[\rho_k(S,x)=1-\mathbb{E}[\prod_{i=1}^{d_L^p}(1-\rho_{k-1}(L,x_i))\prod_{j=1}^{d_R^p}(1-\rho_{k-1}(R,y_j))|(S,x)]
    =\Phi\rho_{k-1}(S,x).\]
    Given that all nodes reach level 0, $\rho_0=1$, this implies the first statement of the proposition.
    
    2.  By monotone convergence, $\rho_k(S,x)\downarrow \rho(S,x)$ for all $(S,x)\in \{L,R\}\times [0,1]$, so  $\lim_{k\rightarrow\infty }(\Phi^k1)(S,x)=\rho(S,x)$. Then by dominated convergence we get that $\rho$ is a fixed point of \eqref{eq: fixed point}.
    Let $f$ be another solution of \eqref{eq: fixed point}. For any solution we have that $f(S,x)=\Phi f(S,x)\leq 1$. Therefore, for any $k$, 
    \[f(S,x)=\Phi^k f(S,x)\leq \Phi^k 1 (S,x)=\rho_k(S,x).\]
    As a result, $f(S,x)\leq \rho(S,x)$. 
\end{proof}
{Note} that Lemma \ref{lem: product offspring polya point}, together with the fact that the position of the root is $\sqrt{x}$ for $x$ chosen uniformly at random from $[0,1]$, immediately implies \eqref{eq: zeta PA}.
{We close this appendix with the proof of Proposition \ref{cor: lower bnd zeta pref-attach}.}
\begin{proof}[proof of Proposition \ref{cor: lower bnd zeta pref-attach}]
{We start with some simple observations which we will use throughout the proof.
First, we}
note that if $0\leq g\leq g'\leq 1$ point-wise, 
then
$$0=\Phi (0)\leq(\Phi g){(S,x)}\leq (\Phi g'){(S,x)}\leq (\Phi 1){(S,x)}
{\leq} 1.
$$ 
{Next, by dividing both the numerator and the denominator in the expressions for $1-({\Phi} f)(S,x)$  in \eqref{phi-def} 
and in \eqref{eq: zeta PA} by $x^{m(S)+1}$ and $x^m$, respectively, we see that
\begin{equation}
\begin{aligned}
1-\left(\frac{1-\frac px\int_{0}^{x}f(L,z)dz}{1+\frac px\int_{x}^{1}f(R,z)dz}\right)^{m-1}
&\leq ({\Phi} f)(R,x)
\leq ({\Phi} f)(\emptyset,x)\\
\leq ({\Phi} f)(L,x)
&\leq
1-\left(\frac{1-\frac px\int_{0}^{x}f(L,z)dz}{1+\frac px\int_{x}^{1}f(R,z)dz}\right)^{m+1}
\end{aligned}
\end{equation}
To prove upper and lower bounds on $\rho(S,x)$, we then use that $\rho(S,x)$ is the pointwise monotone limit, $\rho_k(S,x)\downarrow \rho(S,x)$, where $\rho_k={\Phi}^k 1$, see Proposition~\ref{prop: fixed point Phi} and its proof.}

For the upper bound, we   {will inductively bound $\rho_k$ from above by functions $f_k$ that don't depend on the discrete variable $S$. 
  Assume thus that $f$ is of this form.}
Then
\begin{align*}
(\Phi f)(S,x)
&\leq
1-
\left(
\frac{1-\frac px\int_0^xf(y)dy}
{1+\frac px\int_x^1f(y)dy}
\right)^{m+1}
\\&=
1-
\left(
\frac{1-\frac px\int_0^xf(y)dy}
{1-\frac px\int_0^xf(y)dy+\frac px\int_0^1f(y)dy}
\right)^{m+1}
\\
&\leq
1-
\left(
\frac{1-p}
{1-p+\frac px\int_0^1f(y)dy}
\right)^{m+1}
=1-
\left(\frac{1}%
{1+\frac p{x(1-p)}\int_0^1f(y)dy}\right)^{m+1}.
\end{align*}
{where in the second bound we used that 
$\frac 1x\int_0^xf(y)dy\leq 1$.}
{As a consequence,}
$$
\rho(S,x)\leq \rho_k(S,x)\leq f_{k}(x)
\qquad\text{where}\qquad
f_{k}(x)=1-
\Big(\frac{1}{1+\frac{\epsilon_k}x}\Big)^{m+1}
$$
and $\epsilon_k$ is inductively defined by $\epsilon_{1}=\frac{p}{1-p}\int_0^11=\frac p{1-p}$ and
$$
\epsilon_{k+1}={F}(\epsilon_k)\qquad\text{where}\qquad
F(\epsilon)=\frac p{1-p}\int_0^1\Big(1-\Big(\frac{1}{1+\frac{\epsilon}x}\Big)^{m+1}
\Big){dx}.$$
The function $F:[0,\frac{p}{1-p}]\to [0,\frac{p}{1-p}]$ is monotone increasing and concave, with $F(0)=0$ and $F(\frac{p}{1-p}{)}<\frac p{1-p}$, showing that it has two fix-points, the trivial fix-point $0$ and another fix-point $\epsilon^+>0$, with the latter giving the limit $\epsilon^+=\lim_{k\to\infty}\epsilon_k$ and the upper bound
\[\rho(S,x)\leq f_+(x)=1-
\Big(\frac{1}{1+\frac{\epsilon^+}x}\Big)^{m+1}.
\]
{To convert this into an upper bound on $\zeta(p)$, we first bound
\[
({\Phi} \rho)(\emptyset,x)\leq ({\Phi} f_+)(\emptyset, x)
\leq 1-
\left(\frac{1}
{1+\frac p{x(1-p)}\int_0^1f_+(y)dy}\right)^{m}
=1-\Big(\frac{1}{1{+}\frac{\epsilon^+}x}\Big)^m,
\]
which   gives}
$$
\zeta(p)\leq
\int_0^1\Big(1-\Big(\frac{1}{1{+}\frac{\epsilon^+}x}\Big)^m\Big)2xdx
\leq{\int_{0}^1\Big(m\frac{\epsilon^+}x\Big)2xdx= 2m\epsilon^+.}
$$
{Next we use that}
$$
F(\epsilon)
\leq\frac {p}{1-p}\int_0^\epsilon dx+\frac {p}{1-p}\int_\epsilon^1 \Big((m+1)\frac{\epsilon}x\Big){dx}
=\epsilon\frac {p}{1-p}\Big(1+{(m+1)}\log\Big(\frac 1\epsilon\Big))\Big),
$$
{implying that the non-trivial fix-point 
of $F$
obeys the bound
$1\leq \frac {p}{1-p}(1+{(m+1)}\log(1/\epsilon^+))$.  This  in turn implies that $\epsilon^+\leq e^{-\frac{1-2p}{p(m+1)}}$, giving the desired upper bound on $\zeta(p)$.}

In a similar way, one can obtain lower bounds on $\rho_k$, and thus on $\rho=\lim_{k\to\infty}\rho_k$.
All that changes is that the power $m+1$ in our  upper bound now becomes a power  $m-1$, and  the upper bound $\frac 1x\int_0^xf(y)dy\leq 1$ gets replaced by the lower bound $\frac 1x\int_0^xf(y)dy\geq 0$.  The resulting lower bound is  of the form
\[\rho(S,x)\geq f_-(x)=1-
\Big(\frac{1}{1+\frac{\epsilon^-}x}\Big)^{m-1},
\]
where $ \epsilon^->0$ is the non-trivial fix-point of the function $\tilde F:[0,p]\to [0,p]$ defined by
$$
\tilde F(\epsilon)= p\int_0^1\Big(1-\Big(\frac{1}{1+\frac{\epsilon}x}\Big)^{m-1}
\Big){dx}.
$$
Next, we use the fact that $(1+\frac\epsilon x)^{m-1}\geq 1+(m-1)\frac \epsilon x$ to bound
\begin{equation}\label{eq: lw bnd rho}
\rho(S,x)\geq 1-\frac{1}{1+(m-1)\frac \epsilon x}=
\frac{(m-1)\epsilon^-}{(m-1)\epsilon^-+x}
\geq \frac{(m-1)\epsilon^-}{(m-1)\epsilon^-+1}.
\end{equation}
By the same reasoning, we may bound $\tilde F$ from below by
$$
\tilde F(\epsilon)\geq 
p\int_0^1\frac{(m-1)\epsilon}{(m-1)\epsilon+x}dx
=p(m-1)\epsilon \log\frac{1+(m-1)\epsilon}{(m-1)\epsilon}
$$
showing that $\epsilon^-$ is bounded from below by the solution of
$1=p(m-1)\log\frac{1+(m-1)\epsilon}{(m-1)\epsilon}$, which inserted into \eqref{eq: lw bnd rho} gives
$
\rho(S,x)
\geq e^{-\frac 1{p(m-1)}}$, as claimed.  Inserted into \eqref{eq: zeta PA}, this also gives the lower bound on $\zeta(p)$.
\end{proof}
\end{appendix}
\end{document}